\documentclass[11pt]{amsart}
\usepackage{a4wide}
\usepackage{amssymb,amsthm}
\usepackage{fullpage}
\usepackage{color}
\usepackage{amsmath}

\newcommand{\beq}{\begin{equation}}
\newcommand{\eeq}{\end{equation}}

\newcommand{\n}{{\bf N}}

           \newcommand\om{\omega}

\def\eps{\varepsilon}

\Large

%\newtheorem{theo}{Theorem}[section]
%\newtheorem{prop}{Proposition}[section]
%\newtheorem{lem}{Lemma}[section]
%\newtheorem{rem}{Remark}[section]
%\newtheorem{defi}{Definition}[section]
%\newtheorem{cor}{Corollary}[section]
%\numberwithin{equation}{section}

%\newtheorem{theo}{Theorem}
%\newtheorem{theoreme}{Theorem}[section]
%\newtheorem{lemma}{Lemma}

\newtheorem{prop}{Proposition}
\newtheorem{theoreme}[prop]{Theorem}
\newtheorem{lem}[prop]{Lemma}

\newtheorem{rem}[prop]{Remark}

\numberwithin{equation}{section}
\numberwithin{prop}{section}
\begin{document}
\title{ Uniform regularity for the Navier-Stokes equation  \\ with Navier boundary condition}

\author{Nader Masmoudi}
\address{Courant Institute, 251 Mercer Street, 
New York, NY 10012-1185, 
USA,}
\email{}
\author{Frederic Rousset}
\address{IRMAR, Universit\'e de Rennes 1, campus de Beaulieu,  35042  Rennes cedex, France}
\email{ frederic.rousset@univ-rennes1.fr  }
\date{}
\maketitle
\begin{abstract}
We prove that there exists an interval of time  which is  uniform in the vanishing viscosity limit
  and for which the Navier-Stokes equation with Navier boundary condition
   has a strong solution. This solution is uniformly bounded 
    in a conormal Sobolev space and has  only one normal
    derivative bounded in $L^\infty$. 
     This allows to  get  the vanishing viscosity limit  to the incompressible Euler system 
      from  a strong compactness argument.
\end{abstract} 

\section{Introduction}
We consider the incompressible Navier-Stokes equation
\beq
\label{NS}
\partial_{t} u + u \cdot \nabla u + \nabla p = \eps \Delta u, \quad  \nabla \cdot u = 0, 
 \quad x \in \Omega,
 \eeq
 in a domain $\Omega$ of $\mathbb{R}^3$. The velocity $u$ is a three-dimensional vector field
  on $\Omega$ and  the pressure $p$ of the fluid  is a scalar   function. We  add on the boundary 
 the Navier  (slip) boundary condition
 \beq
 \label{N}
  u \cdot n= 0, \quad (Su \cdot n)_{\tau}= - \alpha u_{\tau}, \quad x \in \partial \Omega
  \eeq
  where $n$ stands for the outward unit  normal to $\Omega,$ $S$ is the strain tensor,
  $$ Su= {1\over 2}( \nabla u + \nabla u^t)$$
and for some vector field $v$  on $\partial \Omega$, $v_{\tau}$ stands for the tangential part of $v$:
$ v_{\tau}= v - (v\cdot n)n$.

The parameter $\eps >0$ is  the inverse of the  Reynolds number  whereas  $\alpha $
 is another   coefficient which measures the tendancy of the fluid to slip on the boundary.
  This  type of boundary condition  is  often used  to model rough boundaries, we refer for
   example to \cite{DGV}, \cite{GM10} (see also \cite{MS03cpam} for a derivation from the 
   Maxwell boundary condition of the Boltzmann equation through a hydrodynamic limit).  
     It is known that  when $\eps $ tends to zero a weak solution
    of \eqref{NS}, \eqref{N} converges towards a solution of the Euler equation, we refer to 
  \cite{Bardos},     \cite{Mikelic-Robert}, \cite{Kelliher}, \cite{Iftimie-Planas}. In particular,  in the three-dimensional case,   in \cite{Iftimie-Planas}, 
      it is proven by a modulated energy type approach that  for sufficiently smooth solution of the Euler equation, 
        an $L^2 $ convergence holds. The situation for this problem is thus very different from
         the case of  no-slip  boundary conditions which is widely open for the Navier-Stokes
          equation except in the analytic case \cite{Sammartino-Caflisch} (see also 
           \cite{Kato86} for some necessary condition to get convergence and   
            \cite{Masmoudi98arma} for some special case).
             
            Here,  we are interested in  the existence of strong solutions  of  \eqref{NS}, \eqref{N} with uniform bounds  
  on an interval of time independent of $\eps \in (0, 1]$ and in a topology sufficiently strong
   to deduce  by a strong compactness argument that the solution converges strongly to 
    a solution of the Euler equation
    \beq
    \label{Euler}
    \partial_{t} u + u \cdot \nabla u + \nabla p = 0, \quad \nabla \cdot u = 0
    \eeq
    with the boundary condition $u\cdot n=  0$ on $\partial \Omega$.
   Note that  for  such an argument  to  succeed, we need to work in a functional space
   where both \eqref{NS} and \eqref{Euler} are well-posed.
   
   Let us recall that there are two classical ways to study  the vanishing viscosity limit  by compactness
    arguments. The first one consists 
   in trying to pass to the limit weakly in the Leray solution of the Navier-Stokes system.
   However, there is a lack of compactness and one cannot pass to the limit in the 
   nonlinear term. It is indeed an open problem to characterize the weak limit of 
   any sequence of the Navier-Stokes system when the viscosity goes to zero even in 
   the whole space case (see \cite{Lions96b,Masmoudi07hand}). The second way consists in trying to work with strong solutions in 
   Sobolev spaces. In the case of the 
   whole space (or the case there is no boundary)
     this approach yields a uniform time of existence  and the convergence towards 
   a solution of the Euler system (see \cite{Swann71,Kato72,Masmoudi07cmp}).
   The problem is that due to the presence of a boundary 
    the time of existence  $T_\eps$ depends on the viscosity and one  often 
   cannot prove that it stays bounded away from zero.  Nevertheless, 
   in domain with  boundaries,  for some special type of  Navier boundary conditions  or boundaries,
    some uniform  $H^3$ (or $W^{2,p}, $ with $p$ large enough) estimates  and   
   a   uniform time of existence for Navier-Stokes when the viscosity goes to zero 
   have been  recently   obtained  (see  \cite{Xin,BC09, Beirao, Berselli}).   As we shall see below, 
    for these special   boundary conditions,  the main part of the boundary layer vanishes which 
   allows this uniform control in some limited regularity Sobolev space. 
  
       Here, our  approach can be seen as intermediate  
   between these two cases since we  shall  get strong solutions but controlling   many  tangential derivatives and only 
    one normal  derivative. This control is compatible with the presence of a
   boundary layer when the viscosity goes to zero.

     To understand, the difficulties in the presence of  boundaries, one can use  formal boundary layer expansions.
      The solution $u^\eps$  of  \eqref{NS}, \eqref{N} is expected to have the following expansion
     \beq
     \label{expIS} u^\eps(t, x)= u(t,x) + \sqrt{\eps}\, V(t, y, z/\sqrt{\eps}) + \mathcal{O}(\eps)
     \eeq
     (we assume   that $ (y,z) \in \Omega= \mathbb{R}^2 \times(0, +\infty)$ to simplify this heuristic part)
     where $V$ is a smooth profile which is fastly decreasing in its last variable.
     Note that the rigorous constuction  of such expansions have been performed in \cite{Iftimie-Sueur}
       where  it  was also proven that the remainder is indeed  $\mathcal{O}(\eps)$ in $L^2$.
      With such an expansion, we immediately get that in the simplest space
       where the 3-D Euler equation is well-posed, namely   $H^s$, $s>5/2$ the  norm of $u^\eps$ 
       cannot be uniformly bounded because of the profile $V$.  For some special
        Navier boundary conditions  considered in   \cite{Xin,BC09, Beirao, Berselli}, the leading profile $V$
         vanishes and hence  uniform  $H^3$ or  $W^{2,p}$, $p>3$ estimates have been obtained.
         Nevertheless, as pointed out in \cite{Iftimie-Sueur}, in the generic case,  $V$ does not vanish. 
         
          We shall prove in
          this paper that in the general case, we can indeed achieve the above program by
           working in anisotropic  conormal Sobolev spaces.  Again,   because of \eqref{expIS},
            we can hope a uniform control  of one normal derivative of the solution  in $L^\infty$
             and thus  a control of the Lipschitz norm of the solution hence it seems  reasonable
              to be able to  recover in the limit the well-posedness of the Euler equation.
               The situation is thus also  different from the case of  "non-characteristic" Dirichlet condition where
                boundary layers are of size $\eps$ but  of  amplitude $1$. In this situation, one can prove in  
                some  stable cases the  $L^2$ convergence, but  since  strong  compactness in the normal variable
                 cannot  be expected, the  proof uses in a crucial way the construction of an asymptotic
                  expansion and the control of the remainder. We refer  for example to \cite{Temam-Wang},  \cite{Grenier-Gues}, \cite{Gisclon-Serre},  \cite{Rousset1D}, \cite{Grenier-Masmoudi,Masmoudi00cpam}, \cite{Grenier-Rousset}, 
                   \cite{RoussetEkman}, \cite{Masmoudi-Rousset}, \cite{Metivier-Zumbrun}.  The drawbacks of
                    this approach are  that it requires the a priori knowledge of the well-posedness of the limit problem
                     and that   it requires  the solution of the limit problem to be smoother than the one of the viscous problem (which is not very natural). Finally, let us mention that for some  problems where  only the normal
                      viscosity vanishes,  it is  also possible to use weak compactness arguments, \cite{Saint-Raymond}.
                      
                      Our aim here is to prove that  in a situation where the formal expansion
                  is under the form \eqref{expIS}, one can  get strong solutions of the viscous and
                   the inviscid problem in the same  appropriate functional framework  and justify
                    the vanishing viscosity limit by a strong compactness argument.
                      In some sense,   we want to use on  a boundary layer problem   the same approach 
                      that    is  classically used in  singular oscillatory limits (as the compressible-incompressible limit, see
                      \cite{Klainerman-Majda}, \cite{Metivier-Schochet} for example)
                      where the existence of  a strong solution on an interval of time independent of the small
                       parameter is first proven and the convergence studied in a second step.
                       To  go further in the analogy, we can think of  boundary layer problems  with formal expansions
                        as \eqref{expIS} as analogous  to  well-prepared problems. 
                              
       We  consider a  domain $\Omega \subset \mathbb{R}^3$ such   that
   there exists a covering  of $\Omega$ under the form
    $\Omega \subset \Omega_{0} \cup_{i=1}^n \Omega_{i}$ where 
    $\overline{\Omega_{0}} \subset \Omega$ and in each $\Omega_{i}$, there exists a  function
     $\psi_{i}$ such that  $\Omega \cap \Omega_i= \{ (x= (x_{1}, x_{2}, x_{3}), \, x_{3}>\psi_{i}(x_{1}, x_{2}) \}    \cap \Omega_i $
      and $\partial \Omega \cap \Omega_{i}= \{ x_{3}= \psi_{i}(x_{1}, x_{2}) \}   \cap \Omega_i.$  We say
       that $\Omega$ is $\mathcal{C}^m$ if the functions  $\psi_{i}$ are $\mathcal{C}^m$.
      
    To define Sobolev  conormal spaces,  we consider $(Z_{k})_{ 1 \leq k \leq N}$ a finite set of generators 
     of vector fields that are  tangent to $\partial \Omega$ and  we set 
     $$ H^m_{co}(\Omega)= \big\{ f \in L^2(\Omega), \quad  Z^I \in L^2 (\Omega), \quad | I | \leq m \big\}$$
      where for $I=(k_{1},  \cdots, k_{m})$, 
      $$ Z^I=  Z_{k_{1}}\cdots Z_{k_{m}}.$$
      We also set
      $$ \|f \|_{m}^2 = \sum_{|I| \leq m } \|Z^I f \|_{L^2}^2.$$
       For a vector field, $u$,  we shall say that $u$ is in $H^m_{co}(\Omega)$ if each of its  components
       are  in  $H^m_{co}$ and thus
       $$ \|u \|_{m}^2 = \sum_{i=1}^3 \sum_{| I | \leq m } \|Z^I u_{i} \|_{L^2}^2.$$
       
          In  the same way, we  set
      $$ \|u\|_{k, \infty}=  \sum_{|I| \leq m } \|Z^I u \|_{L^\infty}$$
       	and we say that $u \in W^{k, \infty}_{co}$ if  $\|u \|_{k, \infty}$ is finite.
	
%	{\bf est ce qu on doit dire que  $H^m_{co}$  is intrinsic ??  }
	
	  Throughout the paper, we shall denote by $\| \cdot \|_{W^{k, \infty}}$ the usual Sobolev norm
	   and use the notations $\| \cdot \|$ and $(\cdot, \cdot)$ for the $L^2$ norms and scalar products.

        Note that  the $\| \cdot  \|_{m} $ norm  yields inside $\Omega$   a control
         of   the standard $H^m $ norm, whereas  close to the boundary, 
         there is no control of the normal derivatives. The use of conormal Sobolev spaces has a long history
          in (hyperbolic)  boundary value problems, we refer for example to \cite{Hormander},  \cite{Tartakoff}, \cite{Bardos-Rauch}, 
           \cite{Gues,JM09} and references therein.

      Let us set $ E^m = \{ u \in H^m_{co}, \, \nabla u  \in H^{m-1}_{co}\}$.  
       Our main result is the following:
      
      \begin{theoreme}
      \label{mainb}
      Let $m$ be an integer satisfying  $m  > 6$ and $\Omega$ be   a $\mathcal{C}^{m+2}$ domain.
      Consider  $u_{0} \in E^m$ such that  $\nabla u_{0} \in W^{1, \infty}_{co}$
       and $\nabla \cdot u_{0}= 0$, $u_{0} \cdot n_{/\partial \Omega}= 0$. Then,  there exists $T>0$
       such that for every $\eps \in (0, 1)$ and $\alpha, \, |\alpha| \leq 1$, there exists
        a unique $u^\eps  \in \mathcal{C}([0, T], E^m)$  such that $ \|\nabla u^\eps\|_{1, \infty}$ is
         bounded on $[0, T]$
         solution of \eqref{NS}, \eqref{N} with initial data  $u_{0}$.
         Moreover,  there exists $C>0$ independent of $\eps$ and $\alpha$ such that 
       \beq
       \label{uniftheo}
       \sup_{[0, T]} \Big(  \|u(t) \|_{m}+  \| \nabla u(t) \|_{m-1} + \| \nabla u(t) \|_{1, \infty} \Big) +
        \eps \int_{0}^T \| \nabla^2 u(s) \|_{m-1}^2 \, ds \leq C.
       \eeq
      
      \end{theoreme}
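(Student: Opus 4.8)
The plan is to obtain the theorem from uniform a priori estimates, a local (possibly $\eps$-dependent) existence result, and a continuation argument. For each fixed $\eps\in(0,1)$, existence of a strong solution $u^\eps$ in $\mathcal C([0,T_\eps],E^m)$ with $\|\nabla u^\eps\|_{1,\infty}$ bounded is obtained by standard parabolic methods (a fixed point on the system linearized around $u$, or a Galerkin scheme compatible with \eqref{N}); the real content is to show that
$$ N_m(t):=\|u^\eps(t)\|_m+\|\nabla u^\eps(t)\|_{m-1}+\|\nabla u^\eps(t)\|_{1,\infty} $$
stays bounded, on an interval $[0,T]$ independent of $\eps$ and of $\alpha$ with $|\alpha|\le1$, by a constant depending only on the data. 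I would run a bootstrap: assuming $N_m(t)\le 2C_0$ on $[0,T^\ast]$ with $C_0$ controlling $u_0$, prove an inequality
$$ N_m(t)^2+\eps\int_0^t\|\nabla^2 u^\eps\|_{m-1}^2\,ds\ \le\ C_0^2+T\,\Lambda\big(N_m(t)\big) $$
for some continuous increasing $\Lambda$, which closes for $T$ small, independent of $\eps$. Throughout, the dissipation term $\eps\int\|\nabla^2 u^\eps\|_{m-1}^2$ must be treated as a \emph{good} term: this is why conormal regularity plus exactly one normal derivative is the natural space.

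\textbf{Step 1: conormal energy estimates.} Apply $Z^I$, $|I|\le m$, to \eqref{NS}, take the $L^2$ product with $Z^I u$ and sum. The transport term is handled using $\nabla\cdot u=0$, up to commutators $[Z^I,u\cdot\nabla]u$ that are controlled, via product and anisotropic Sobolev estimates in $H^m_{co}$ together with the fact that the $Z_k$ are closed under commutation, by $\|\nabla u\|_{1,\infty}$-type quantities times $\|u\|_m$. For the viscous term, integrate $\eps(Z^I\Delta u,Z^I u)$ by parts: the boundary contribution is rewritten with \eqref{N}, which expresses $\partial_n u_\tau$ on $\partial\Omega$ in terms of $\alpha u_\tau$ and the shape operator of $\partial\Omega$, hence is lower order; the interior commutator $[Z^I,\Delta]u$ has at most two derivatives, at most one of them normal, and is absorbed into $\eps\int\|\nabla^2 u\|_{m-1}^2$ at the price of a large multiple of $\|u\|_m^2$. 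The delicate term is the pressure: $Z^I$ has a normal component near $\partial\Omega$, so one cannot integrate $(Z^I\nabla p,Z^I u)$ by parts for free. I would estimate $\nabla p$ by using that $p$ solves $-\Delta p=\nabla\cdot(u\cdot\nabla u)$ with Neumann data $\partial_n p|_{\partial\Omega}=(\eps\Delta u-u\cdot\nabla u)\cdot n$ obtained from the normal trace of \eqref{NS}; rewriting $\eps\Delta u\cdot n$ and $u\cdot\nabla u\cdot n$ on the boundary using $u\cdot n=0$ and \eqref{N} makes this datum controllable by conormal norms, so tangential conormal derivatives of $p$ are bounded by elliptic regularity, while one normal derivative of $p$ is read off directly from $\partial_n p=(\eps\Delta u-\partial_t u-u\cdot\nabla u)\cdot n$ in conormal coordinates. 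This yields $\|\nabla p\|_{m-1}\lesssim \Lambda(N_m)+\text{(small)}\,\eps\|\nabla^2 u\|_{m-1}$.

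\textbf{Step 2: the normal derivative, $L^2$ and $L^\infty$.} To control $\|\nabla u\|_{m-1}$ it suffices, by a div–curl / elliptic regularity argument using $\nabla\cdot u=0$, the boundary condition $u\cdot n=0$ and the already-estimated tangential conormal derivatives, to bound the vorticity $\omega=\nabla\times u$ in $H^{m-1}_{co}$. The point is that $\omega$ solves $\partial_t\omega+u\cdot\nabla\omega-\omega\cdot\nabla u=\eps\Delta\omega$ and that \eqref{N} turns into a \emph{Navier-type} boundary condition for $\omega$ (the tangential part of $\omega$ on $\partial\Omega$ equals a bounded operator — built from $\alpha$ and the curvature of $\partial\Omega$ — applied to $u$), which produces only controllable boundary terms; the same conormal energy scheme as in Step~1, applied to $\omega$, closes $\|\nabla u\|_{m-1}$. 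For $\|\nabla u\|_{1,\infty}$, conormal regularity of order $m$ with one normal derivative does not embed into $W^{1,\infty}$, so this must come from a maximum-principle argument on the convection–diffusion equation satisfied by (a suitable rotated component of) $\omega$: near $\partial\Omega$, using the Navier-type boundary condition for $\omega$, one introduces a modified scalar unknown obeying a convection–diffusion inequality with homogeneous Dirichlet/Neumann condition and right-hand side bounded in $L^\infty$ by the conormal quantities already controlled (via anisotropic Sobolev embeddings). The maximum principle for convection–diffusion then gives the $L^\infty$ bound uniformly in $\eps$; differentiating once tangentially and repeating yields $\|\nabla u\|_{1,\infty}$.

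Collecting Steps~1--2 produces the differential inequality above, and the bootstrap gives a uniform $T$ and \eqref{uniftheo}; uniqueness follows from a plain $L^2$ estimate on the difference of two solutions, using the $\|\nabla u\|_{0,\infty}$ bound. I expect the two genuinely hard points to be (i) the pressure estimate in Step~1 — controlling $\nabla p$ in conormal norm uniformly in $\eps$, in particular its normal-derivative part and the boundary contribution of $\eps\Delta u\cdot n$ — and (ii) in Step~2, isolating the correct modified vorticity unknown whose convection–diffusion equation carries a boundary condition amenable to the maximum principle; once the product and anisotropic Sobolev calculus in $H^m_{co}$ is in place, the remaining commutator bookkeeping is routine.
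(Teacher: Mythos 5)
Your overall strategy (uniform a priori estimate plus bootstrap) and your Step 1 and pressure discussion are in the same spirit as the paper, but your Step 2 has a genuine gap in the $L^2$ normal-derivative estimate, and a secondary one in the $L^\infty$ part.

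In Step 2 you propose to run ``the same conormal energy scheme as in Step~1, applied to $\omega$'' and claim the boundary terms are controllable because $\omega_\tau|_{\partial\Omega}$ is given by the Navier condition. This does not close. When you integrate $\eps(Z^\alpha\Delta\omega,Z^\alpha\omega)$ by parts you produce the boundary term $\eps\int_{\partial\Omega}\partial_n Z^\alpha\omega\cdot Z^\alpha\omega$, and the Navier condition gives you $\omega_\tau$ on $\partial\Omega$ — a Dirichlet-type trace — but says nothing about $\partial_n\omega_\tau$. For $u$ in Step~1 the analogous boundary term was fine because $\eqref{N}$ is itself a relation for $\partial_n u_\tau$; no such relation exists for $\partial_n\omega_\tau$, and $\partial_n Z^\alpha\omega$ costs one normal derivative more than the dissipation $\eps\|\nabla Z^\alpha\omega\|^2$ can absorb (after trace it effectively costs $\eps\|\nabla^2\omega\|_{m-1}$, i.e.\ three derivatives of $u$, two normal). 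The fix, which is the central trick of the paper, is to replace $\omega$ by the quantity $\eta=\big(Su\,n+\alpha u\big)_\tau$ (equivalently $\omega_h-2\alpha u_h^\perp$ in the half-space), which by construction satisfies the \emph{homogeneous} Dirichlet condition $\eta|_{\partial\Omega}=0$; then $Z^\alpha\eta|_{\partial\Omega}=0$ and the viscous boundary term simply vanishes. You do invoke an analogous homogeneous-boundary-condition unknown for the $L^\infty$ estimate, but not for the $H^{m-1}_{co}$ estimate, which is where it is indispensable.

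A second, less central, issue: for the $L^\infty$ bound you say ``differentiating once tangentially and repeating yields $\|\nabla u\|_{1,\infty}$.'' This is fine for the genuinely tangential fields $Z_1,Z_2$, where the maximum principle for the differentiated equation applies; but $Z_3=\varphi(z)\partial_z$, though conormal, has a bad commutator with $\Delta$ (it generates $\varphi'\partial_{zz}$), so the maximum principle does not apply to $Z_3\eta$ directly. The paper has to control $\|Z_3\eta\|_{L^\infty}$ by a different route, using the explicit fundamental solution of an approximating one-dimensional Fokker--Planck operator $\partial_t+z\gamma(t,y)\partial_z-\eps\partial_{zz}$ after freezing the transport coefficients at the boundary (Lemmas analogous to \ref{lemFP0} and \ref{FP}); this is precisely the estimate that is compatible with the $\sqrt\eps\,V(t,y,z/\sqrt\eps)$ boundary-layer profile. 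Your one-line claim elides this, and it is the second genuinely hard point in the paper's proof.
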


      Note that the uniqueness part is obvious since we work with functions with Lipschitz regularity.
       The fact that we need to  control $ \|\nabla u \|_{1, \infty}$ and not only the Lipshitz norm
        is classical in characteristic hyperbolic problems when one tries to work with the minimal normal
         regularity, we refer for example to \cite{Gues}.
          The same remark holds for  the required regularity, the same restriction on $m$ holds
           in the case of general characteristic hyperbolic problems  studied in \cite{Gues}. It is maybe
           possible to improve this by using  more precisely the structure of the incompressible equations.
            The fact that we need to control $m-1$  conormal derivatives for $\partial_{n}u $ and not only 
             $m-2$ is linked to the control of the pressure in our incompressible framework.
           The  regularity  of the domain that we require, is also mainly due to the estimate of the pressure, 
            this is the classical regularity in order to estimate the pressure in the Euler equation
             (see \cite{Temam} for example).
          Another  important remark  is that  in  proving 
         Theorem \ref{mainb}, we get
          a uniform existence time for the solution of \eqref{NS}, \eqref{N} without using  that there exists
          a solution of the Euler equation.  In particular, we shall get by passing to 
           the limit that the Euler equation is well-posed in the same functional framework.  We hope
            to be able to use this approach on more complicated problems where it is much easier
             to prove the local well-posedness for the viscous problem than for the inviscid one.
            Finally, it is also possible  to prove that in the case that the initial data is $H^s$ and satisfies some suitable
             compatibility conditions, we can deduce from the estimate \eqref{uniftheo}
              and the regularity  result for the Stokes problem   that  $u$  is  in the standard  $H^s$ Sobolev space
               on $[0, T]$.
               Nevertheless, higher order normal derivatives will not be uniformly bounded in $\eps$.

     The main steps of the  proof of Theorem \ref{mainb} are the following.
      We shall first  get a conormal energy estimate in $H^m_{co}$   for the  velocity $u$
       which is  valid  as long as the Lipshitz norm of the solution is controled.
        The second step is to estimate $\|\partial_{n} u \|_{m-1}$. In order to  get this estimate
         by   an energy method, $\partial_{n}u $ is not a convenient quantity since
          it does not vanish on the boundary.  Nevertheless, we observe
           that  $ \partial_{n} u \cdot n$ can be immediately controlled thanks to the 
           control of $u$ in  $H^m_{co}$   and  the incompressibility condition.
             Moreover, due to  the Navier condition \eqref{N},  it is convenient to study  $ \eta= ( Su \,n + \alpha u\big)_{\tau}$. Indeed it vanishes on the boundary and gives a control of $(\partial_{n} u )_{\tau}.$
              We shall thus prove a control of $\| \eta \|_{m-1}$ by performing energy estimates on the equation
               solved by $\eta$. This estimate will be valid as long as  $ \| \nabla u(t) \|_{1, \infty} $ remains
                bounded. The third step is to estimate the pressure. Indeed, since the conormal fields
                 $Z_{i}$ do not commute with the gradient,  the pressure is not transparent in the estimates.
                 We shall prove that the pressure can be split into two parts, the first one  has the same
                 regularity as in the Euler equation and the second part  is linked to  the Navier
                  condition. Finally, the  last step is to  estimate   $ \| \nabla u(t) \|_{1, \infty}$
                   and actually $\|(\partial_{n} u)_{\tau}\|_{1, \infty}$ since the other terms can be controlled 
                    by   Sobolev embedding.  To perform this estimate we shall again choose 
                     an equivalent quantity which satisfies  an homogeneous  Dirichlet condition and
                      solves at leading order a convection diffusion equation. The estimate  will
                       be obtained by  using  the fundamental solution of an approximate equation.
                       
           Once Theorem \ref{mainb} is obtained, we can easily get the  inviscid limit:
           
       \begin{theoreme}
       \label{inviscid}
       Let $m$ be an integer satisfying  $m  > 6$ and $\Omega$ be   a $\mathcal{C}^{m+2}$ domain.
      Consider  $u_{0} \in E^m$ such that  $\nabla u_{0} \in W^{1, \infty}_{co}$, $\nabla \cdot u_{0}= 0$, 
      $ u_{0} \cdot n_{/\partial \Omega}= 0$  and $u^\eps$
       the solution of \eqref{NS}, \eqref{N} with initial value $u_{0}$ given by Theorem
        \ref{mainb}. Then  there exists a unique  solution to  the Euler system \eqref{Euler},    $u \in L^\infty(0, T, E^m) $
         such that   $ \|\nabla u\|_{1, \infty}$  is
         bounded on $[0, T]$  and such that
         $$ \sup_{[0, T]}\big(  \|u^\eps - u \|_{L^2} + \|u^\eps - u \|_{L^\infty} \big) \rightarrow 0$$
         when $\eps $ tends to zero.

       \end{theoreme}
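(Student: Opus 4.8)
The plan is to deduce the inviscid limit from the uniform bounds \eqref{uniftheo} of Theorem \ref{mainb} by a strong compactness argument, exactly along the lines announced in the introduction. First I would fix the interval $[0,T]$ and the constant $C$ provided by Theorem \ref{mainb}, so that the family $(u^\eps)_{\eps\in(0,1)}$ is bounded in $L^\infty(0,T;E^m)$ with $\|\nabla u^\eps\|_{1,\infty}$ bounded uniformly, and $\eps\int_0^T\|\nabla^2 u^\eps\|_{m-1}^2\,ds\le C$. From the equation \eqref{NS} together with these bounds one controls $\partial_t u^\eps$ in, say, $L^\infty(0,T;H^{-1}_{loc})$ (the transport term $u^\eps\cdot\nabla u^\eps$ and the pressure gradient are bounded in $L^2$ locally using the pressure estimate from the earlier steps, and $\eps\Delta u^\eps$ is $O(\sqrt\eps)$ in $H^{-1}$ by the last dissipation bound). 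Hence, by the Aubin--Lions lemma applied on each $\Omega_i\cap\Omega$ and on $\Omega_0$, the uniform bound in $H^m_{co}$ plus one conormal derivative of $\nabla u^\eps$ (which does give a genuine $H^1_{loc}$ bound away from the boundary, and, crucially, $C^{0,1}$ control near the boundary through $\|\nabla u^\eps\|_{1,\infty}$) gives strong precompactness of $(u^\eps)$ in $C([0,T];L^2_{loc})$; combined with the uniform Lipschitz bound and interpolation this upgrades to strong convergence in $C([0,T];L^\infty_{loc})$ as well. A standard diagonal/covering argument together with decay at infinity (or just the fact that $\Omega$ is covered by finitely many $\Omega_i$ if $\Omega$ is bounded; in the half-space model one uses the conormal control to handle the far field) promotes this to convergence in $C([0,T];L^2(\Omega))$ and $C([0,T];L^\infty(\Omega))$, up to a subsequence, to some limit $u$.

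Next I would identify the limit. Lower semicontinuity of the conormal norms under weak-$*$ convergence gives $u\in L^\infty(0,T;E^m)$ with $\|\nabla u\|_{1,\infty}$ bounded on $[0,T]$, subject to the same constant $C$. The incompressibility $\nabla\cdot u=0$ and the boundary condition $u\cdot n=0$ on $\partial\Omega$ pass to the limit because they are linear and $u^\eps\to u$ strongly in $C([0,T];L^\infty_{loc})$ (the trace $u^\eps\cdot n=0$ is preserved since convergence is locally uniform up to the boundary, using the Lipschitz bound). For the momentum equation, the transport term converges strongly, $\eps\Delta u^\eps\to 0$ in the sense of distributions by the dissipation bound, and the pressures: one recovers $p$ as the solution of the elliptic problem $-\Delta p=\nabla\cdot(u\cdot\nabla u)$ in $\Omega$ with the Neumann-type condition inherited from \eqref{Euler} and $u\cdot n=0$, the viscous correction to the pressure boundary condition being $O(\eps)$ by the pressure splitting established earlier. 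Thus $u$ solves \eqref{Euler} with $u\cdot n=0$, in the class $L^\infty(0,T;E^m)$ with $\nabla u\in L^\infty(0,T;W^{1,\infty}_{co})$.

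Finally, uniqueness of the Euler solution in this class is immediate since any such solution is Lipschitz: given two solutions $u_1,u_2$ with the stated regularity, a plain $L^2$ energy estimate on $u_1-u_2$ (the pressure term drops by incompressibility and $u\cdot n=0$; the transport commutator is bounded by $\|\nabla u_1\|_{L^\infty}+\|\nabla u_2\|_{L^\infty}$ times $\|u_1-u_2\|_{L^2}^2$) and Gr\"onwall's lemma force $u_1=u_2$. Uniqueness of the limit then implies that the \emph{whole} family $(u^\eps)$, not merely a subsequence, converges, which yields the stated convergence $\sup_{[0,T]}(\|u^\eps-u\|_{L^2}+\|u^\eps-u\|_{L^\infty})\to 0$. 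The main obstacle, and the only place real care is needed, is the compactness step near the boundary: the uniform $H^m_{co}$ bound alone does not control normal derivatives, so one must exploit the extra uniform bound $\|\nabla u^\eps\|_{1,\infty}$ — which gives equicontinuity in the normal direction — to run Aubin--Lions in a boundary-fitted chart (conormal derivatives tangentially, $L^\infty$-Lipschitz normally), and to make sure the limiting trace $u\cdot n=0$ is genuinely attained. Everything else is a routine passage to the limit in terms already estimated in the proof of Theorem \ref{mainb}.
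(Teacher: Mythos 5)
Your proposal is correct and follows essentially the same route as the paper: extract strong compactness from the uniform bounds of Theorem \ref{mainb} (the paper bounds $\partial_t u^\eps$ in $L^2(0,T;H^{m-1}_{co})$ and applies Ascoli in $\mathcal{C}([0,T],H^{m-1}_{co})$, whereas you invoke Aubin--Lions locally with a weaker time-derivative bound plus the uniform Lipschitz control, a minor technical variant), upgrade to $L^\infty$ convergence via the Lipschitz/anisotropic embedding, identify the limit as a solution of the Euler system, and use uniqueness of Lipschitz solutions to get convergence of the whole family.
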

       
       We shall obtain Theorem \ref{inviscid} by a classical strong compactness argument.
       Note that  the $L^\infty$ convergence was not obtained   in   \cite{Iftimie-Planas}, \cite{Iftimie-Sueur}.
       It does not seem possible to get  such a convergence   thanks to   a modulated energy type argument.
       
       Note that if $\Omega$ is not bounded the above convergences hold   on every compact  of 
        $\overline{\Omega}$.

      The paper is organized as follows: in section \ref{sectionhalf}, we shall first  explain the main steps
       of the proof of Theorem \ref{mainb} in the simpler case where $\Omega$ is the half-space
        $\mathbb{R}^2 \times (0,  +\infty)$. This allows to present the analytical part of the proof
        without complications coming from the geometry of the domain. The general case will be treated
         in section \ref{sectionb}. Finally section \ref{sectioninviscid} is devoted to the proof of Theorem
          \ref{inviscid}.

\section{A first energy estimate}

\label{sectionL2}
In this section, we first recall the basic a priori  $L^2$ energy estimate which holds for
 \eqref{NS}, \eqref{N}.

 \begin{prop}
 \label{energie}
 Consider a (smooth) solution of \eqref{NS}, \eqref{N}, then we have for every $\eps>0$ and $\alpha\in \mathbb{R}$, 
$$
 {d \over dt } \big( {1 \over 2} \|u\|^2 \big) + 2\,  \eps \, \|  S  u \|^2 + 2 \, \alpha \eps  |u_{\tau}|^2_{L^2(\partial \Omega)}
 = 0.$$
  \end{prop}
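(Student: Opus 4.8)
The plan is to test the momentum equation \eqref{NS} against $u$ in $L^2(\Omega)$ and to identify each of the four resulting terms; all the manipulations are legitimate since the solution is assumed smooth (and, if $\Omega$ is unbounded, decaying enough for the integrations by parts). First I would note $\int_\Omega \partial_{t}u\cdot u\,dx = \frac{d}{dt}\big(\frac12\|u\|^2\big)$. The convective term vanishes: writing $(u\cdot\nabla u)\cdot u = \frac12\, u\cdot\nabla|u|^2$ and integrating by parts produces $-\frac12\int_\Omega(\nabla\cdot u)|u|^2 + \frac12\int_{\partial\Omega}|u|^2\,(u\cdot n)$, both of which are zero by $\nabla\cdot u = 0$ and $u\cdot n = 0$ on $\partial\Omega$. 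Likewise the pressure term vanishes: $\int_\Omega\nabla p\cdot u = -\int_\Omega p\,\nabla\cdot u + \int_{\partial\Omega}p\,(u\cdot n) = 0$.

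The substantive step is the viscous term. The key algebraic identity is that for a divergence-free field $\Delta u = 2\,\nabla\cdot(Su)$, since $\nabla\cdot(Su) = \frac12\big(\Delta u + \nabla(\nabla\cdot u)\big)$. Hence $\eps\int_\Omega\Delta u\cdot u = 2\eps\int_\Omega(\nabla\cdot Su)\cdot u$, and integrating by parts the symmetric tensor $Su$ gives $-2\eps\int_\Omega Su:\nabla u + 2\eps\int_{\partial\Omega}(Su\,n)\cdot u$. Because $Su$ is symmetric, $Su:\nabla u = Su:Su = |Su|^2$, so the interior contribution is $-2\eps\|Su\|^2$. For the boundary term, since $u\cdot n = 0$ on $\partial\Omega$ we have $u = u_{\tau}$ there, and decomposing $Su\,n$ into its normal and tangential parts while using $u_{\tau}\cdot n = 0$ yields $(Su\,n)\cdot u = (Su\,n)_{\tau}\cdot u_{\tau} = -\alpha|u_{\tau}|^2$ by the Navier condition \eqref{N}. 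Collecting all the terms gives
\beq
\frac{d}{dt}\Big(\frac12\|u\|^2\Big) = -2\eps\|Su\|^2 - 2\alpha\eps\,|u_{\tau}|^2_{L^2(\partial\Omega)},
\eeq
which is the claimed identity after moving the two nonnegative (for $\alpha\ge 0$) terms to the left-hand side.

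The only points that require care are the signs and the bookkeeping in the two integrations by parts (for the convective/pressure terms and for the tensor $Su$), together with the two algebraic identities $\Delta u = 2\,\nabla\cdot(Su)$ and $Su:\nabla u = |Su|^2$ valid under $\nabla\cdot u = 0$. There is no genuine analytic obstacle here; this is the standard $L^2$ estimate and the only subtlety compared with the no-slip case is that the boundary term is not simply discarded but produces exactly the $\alpha$-dependent surface term through \eqref{N}.
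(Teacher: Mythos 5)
Your proof is correct and follows essentially the same route as the paper: testing \eqref{NS} against $u$, killing the convective and pressure terms via $\nabla\cdot u=0$ and $u\cdot n=0$, rewriting $\eps\Delta u=2\eps\,\nabla\cdot(Su)$, integrating by parts with $Su:\nabla u=|Su|^2$, and converting the boundary term with the Navier condition \eqref{N}.
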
 
  
\begin{proof}
 By using \eqref{NS}, we obtain:
 $$ {d \over dt } \big( {1 \over 2} \|u\|^2 \big) = ( \eps \Delta  u, u) - (\nabla p, u) - (u \cdot  \nabla u, u)$$
  where $(\cdot, \cdot)$ stands for the $L^2 $ scalar product. Next, thanks to integration by parts
   and the boundary condition \eqref{N}, we find
\begin{eqnarray*}
& & (\nabla p, u)= \int_{\partial{\Omega} } p\, u \cdot n  -  \int_{\Omega } p\, \nabla \cdot u = 0,\\
& & (u\cdot \nabla u, u) =  \int_{\partial \Omega} {|u|^2 \over 2 } u \cdot n = 0,\\
& & ( \eps \Delta u, u) =  2 \eps (  \nabla \cdot Su, u) =  - 2  \eps \| S  u \|^2 + 2  \eps   \int_{\partial \Omega}
 \big((Su)\cdot n \big)\cdot u \, d\sigma. 
 \end{eqnarray*}
 Finally,  we get from the boundary condition \eqref{N} that
 $$ \int_{\partial \Omega}  (Su \cdot n) \cdot u  = \int_{\partial \Omega}
 \big((Su)\cdot n \big)_{\tau}\cdot u_{\tau} =  - \alpha \int_{\partial \Omega}
 |u_{\tau}|^2\, d\sigma.$$
\end{proof}

\begin{rem}
\label{Korn}
Note that  if  $\Omega$ is a Lispchitz domain,  we get from the Korn inequality
 that  for some $C_{\Omega}>0$, we have for every  
 $H^1$ vector field $u$ which is tangent to the boundary that 
 $$ \| \nabla u \|^2 \leq C_{\Omega} \big(  \| S \, u \|^2  + \|u \|^2\big).$$
  Consequently, we deduce from  Proposition \ref{energie} that
  \beq
  \label{energiebis}
 {d \over dt } \big( {1 \over 2} \|u\|^2 \big) + \eps  c_{\Omega}\|  \nabla  u \|^2 + \alpha \eps  |u_{\tau}|^2_{L^2(\partial \Omega)}
  \leq  \eps  C_{\Omega} \|u \|^2 .
  \eeq
  If $\alpha \geq 0$, this always provides a good energy estimate.
  \end{rem}
\begin{rem}
\label{bord}
Even if $\alpha \leq 0$, we get from the trace Theorem that there exists $C>0$
 independent of $\eps$ such that 
$$ |u_{\tau}|^2_{L^2 (\partial \Omega ) } \leq C  \|\nabla u\|\, \|u\| + \| u \|^2$$
 and hence, we find by using the Young inequality
 \beq
 \label{young}
  ab \leq {  \delta } a^2 +  { 1 \over 4 \delta} b^2, \quad a, \, b \geq 0, \,  \delta >0
  \eeq
   that
 \beq
 \label{ener2} {d \over dt } \big( {1 \over 2} \|u\|^2 \big) + {\eps \over 2} \, c_{\Omega}\, \| \nabla u \|^2    + \eps  |u_{\tau}|^2_{L^2(\partial \Omega)}
  \leq  {2 \,C^2 \eps  \,( \alpha^2+ 1) } \|u\|^2.\eeq
Consequently,  if $\alpha $ is  such  that $\eps \alpha^2 \leq 1$, we still  get a uniform $L^2$ 
estimate from the Gronwall Lemma.

\end{rem}
 
 \section{ The case of a half-space: $ \Omega = \mathbb{R}^3_{+}$}
 
  \label{sectionhalf}

 In order to avoid complications due to the geometry of the domain in the obtention of higher order energy estimates, we shall first give the proof of Theorem 
  \ref{mainb} in the case where  $\Omega$ is the half space $\Omega= \mathbb{R}^2 \times (0, +\infty)$.
   We shall use the notation $x=(y,z), \, z>0$ for a point $x$ in $\Omega$.  To define  the conormal
    Sobolev spaces, it suffices to use $Z_{i}= \partial_{i}$, $i=1, \, 2$ and $Z_{3}= \varphi(z) \partial_{z}$
     where $\varphi(z)$ is any smooth  bounded  function such that $\varphi(0)=0$, $\varphi'(0)\neq 0$
      and $\varphi(z)>0$ for $z>0$ (for example, $\varphi(z)=  z (1 + z )^{-1}$ fits).
       Consequently, we have
       $$ \|  u  \|_{m}^2 =\sum_{|\alpha| \leq m} \| Z^\alpha u \|^2_{L^2}, \quad
          \|  u  \|_{k, \infty}^2 =\sum_{|\alpha| \leq k} \| Z^\alpha u \|_{L^\infty}$$
                    where $ Z^\alpha = Z_{1}^{\alpha_{1}} Z_{2}^{\alpha_{2}} Z_{3}^{\alpha_{3}} u $.

 Throughout this section, we shall focus on  the proof of  a priori estimates for a sufficiently smooth
 solution of \eqref{NS}, \eqref{N} in order to get \eqref{uniftheo}. We use the symbol
  $ \lesssim $ for $\leq C$ where $C$ is a positive number which may change from line to line
   but which is independent of $\eps$ and $\alpha $ for $\eps \in (0, 1)$ and $| \alpha | \leq 1$.
   
   The aim of this section is to prove the following a priori estimate in the case of the half space
   which is  the crucial part towards the proof of Theorem \ref{mainb}.

   \begin{theoreme}
   \label{apriori+}
   For $m>6$, 
 there exists $C>0$ independent of $\eps \in (0, 1]$ and $\alpha $, $| \alpha | \leq 1$ such that   for every sufficiently smooth solution defined on $[0, T]$ of \eqref{NS}, \eqref{N}   in $\Omega=
    \mathbb{R}^2 \times (0, + \infty)$,  we have the a priori estimate
    $$ N_{m}(t) \leq C  \Big( N_{m}(0) +  (1+ t + \eps^3 t^2) \int_{0}^t \big( N_{m}(s) + N_{m}(s)^2 \big) ds\Big), \quad
     \forall t \in [0, T]$$
     where
     $$ N_{m}(t) = \|u(t) \|_{m}^2 + \| \nabla u (t) \|_{m- 1}^2 + \| \nabla u \|_{1, \infty}^2.$$ 
   
%   {\bf est ce que le facteur $(1+ t + \eps^3 t^2)   $ est important ici...}
   
   \end{theoreme}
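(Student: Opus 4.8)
The overall strategy is an energy method in the conormal spaces, following the four-step outline announced after Theorem \ref{mainb}, with all estimates closed in terms of the single quantity $N_m(t)$ via Gronwall. The starting point is the basic $L^2$ bound of Proposition \ref{energie} (together with Remarks \ref{Korn}--\ref{bord} to handle $\alpha\le 0$ with $\eps\alpha^2\le 1$), which controls $\|u\|_0$ and gives the $\eps\int\|\nabla u\|^2$ dissipation term; everything else is built on top of this. Throughout one uses the anisotropic Sobolev embeddings for $H^m_{co}(\R^3_+)$: roughly $\|f\|_{L^\infty}\lesssim \|f\|_{2}^{1/2}\|\partial_z f\|_{2}^{1/2}+\|f\|_2$ and the product/commutator estimates $\|Z^\alpha(fg)-fZ^\alpha g\|\lesssim \|f\|_{m,\infty}\|g\|_{m-1}+\dots$, which are why $m>6$ is needed (one needs enough conormal derivatives to bound the low-order factors in $L^\infty$ by Sobolev embedding in $\R^3$, costing $\sim 5/2$ derivatives, plus room for the reduction to first-order normal regularity as in \cite{Gues}).

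\emph{Step 1 (conormal energy estimate for $u$).} Apply $Z^\alpha$, $|\alpha|\le m$, to \eqref{NS}, take the $L^2$ scalar product with $Z^\alpha u$, and sum. The transport term $u\cdot\nabla u$ produces commutators $[Z^\alpha, u\cdot\nabla]u$; since the $Z_i$ are tangent to $\partial\Omega$, the worst term $u_3\partial_z$ is tamed because $u_3$ itself gains a conormal derivative from incompressibility ($\partial_z u_3=-\partial_1u_1-\partial_2u_2$, so $u_3/\varphi(z)$ is conormal), and the remaining commutators are estimated by $\|\nabla u\|_{1,\infty}\|u\|_m^2$ plus lower-order terms, all $\lesssim N_m+N_m^2$. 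The viscous term $\eps(\Delta u, Z^\alpha u)$: writing $\eps\Delta u=2\eps\nabla\cdot Su$ and integrating by parts gives $-2\eps\|SZ^\alpha u\|^2$ plus boundary terms from the Navier condition \eqref{N} (these involve $\alpha\eps|Z^\alpha u_\tau|^2_{L^2(\partial\Omega)}$ and lower-order curvature-type terms, controlled as in Remark \ref{bord}) plus commutators $\eps([Z^\alpha,\Delta]u, Z^\alpha u)$. The commutator is the delicate point: $[Z_3,\partial_z^2]$ produces one genuinely normal derivative $\partial_z$, so one gets a term like $\eps\|\partial_z u\|_{m-1}\|u\|_m$ which, after Young, is absorbed partly into the dissipation $\eps\|\nabla u\|_m^2$ and partly contributes $\|\nabla u\|_{m-1}^2$ to $N_m$; this is exactly why the $\eps\int_0^t\|\nabla^2u\|_{m-1}^2$ term and the factor $(1+t+\eps^3t^2)$ appear (the $\eps^3t^2$ tracks the cost of trading normal-derivative bounds through the heat-type smoothing). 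One ends up with $\frac{d}{dt}\|u\|_m^2+\eps c\|\nabla u\|_m^2 \lesssim (1+\eps^3 t)(N_m+N_m^2)+ \text{(pressure terms)}$.

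\emph{Step 2 (normal derivative via $\eta$).} The pressure is not transparent because $Z_i$ does not commute with $\nabla$; moreover $\|\nabla u\|_{m-1}$ must be controlled without directly estimating $\partial_z u$ (which does not vanish on the boundary). First, $\partial_z u_3 = -\nabla_y\cdot u_y$ is directly controlled in $H^{m-1}_{co}$ by Step 1, so it suffices to bound $(\partial_z u)_\tau = (\partial_z u_1,\partial_z u_2)$. For this introduce $\eta := (Su\,n+\alpha u)_\tau$, which vanishes on $\partial\Omega$ by \eqref{N} and satisfies $\partial_z u_\tau = 2\eta - (\nabla_y u_3)_\tau - 2\alpha u_\tau$ modulo lower order, so $\|\nabla u\|_{m-1}^2\lesssim \|\eta\|_{m-1}^2+\|u\|_m^2$. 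Derive the evolution equation for $\eta$: it is a convection–diffusion equation $\partial_t\eta+u\cdot\nabla\eta-\eps\Delta\eta = F$ with a homogeneous Dirichlet condition $\eta|_{\partial\Omega}=0$, where $F$ collects commutators of $S\partial_z(\cdot)$ with the equation, pressure contributions, and $\eps$-terms; crucially the pressure enters $F$ only through $\eps\partial_z\nabla p$-type and tangential-derivative-of-$p$ terms. Apply $Z^\alpha$, $|\alpha|\le m-1$, pair with $Z^\alpha\eta$; since $\eta$ has the homogeneous Dirichlet condition the viscous integration by parts produces $-\eps\|\nabla Z^\alpha\eta\|^2$ with \emph{no} boundary term, and one obtains $\frac{d}{dt}\|\eta\|_{m-1}^2+\eps\|\nabla\eta\|_{m-1}^2\lesssim \|F\|_{m-1}\|\eta\|_{m-1}+\dots$

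\emph{Step 3 (pressure).} Taking the divergence of \eqref{NS} gives $-\Delta p = \nabla\cdot(u\cdot\nabla u) = \partial_i u_j\,\partial_j u_i$ with Neumann data $\partial_n p|_{\partial\Omega}$ obtained from \eqref{NS} dotted with $n$ on the boundary (here $\eps\Delta u\cdot n$ contributes the Navier-layer part). Split $p = p^E + p^{N}$: $p^E$ solves the Euler-type Neumann problem ($-\Delta p^E=\partial_iu_j\partial_ju_i$, $\partial_n p^E=-(u\cdot\nabla u)\cdot n$ reduced via \eqref{N}) and satisfies $\|\nabla p^E\|_{m-1}+\|\nabla p^E\|_{1,\infty}\lesssim N_m+N_m^2$ by the classical elliptic regularity in a $\mathcal C^{m+1}$ domain; $p^N$ carries the remaining $\eps$-dependent boundary data and obeys $\|\nabla p^N\|_{m-1}\lesssim \eps(\|\nabla u\|_{m-1}+\|\eta\|_{m-1})$, which is exactly what is needed to close Steps 1--2 with the dissipation terms. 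This is where the $\mathcal C^{m+2}$ assumption on $\Omega$ (here trivial for the half-space, but stated for consistency) is used.

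\emph{Step 4 ($\|\nabla u\|_{1,\infty}$).} It remains to control $\|(\partial_z u)_\tau\|_{1,\infty}$ (the tangential derivatives of $u$ and $\partial_z u_3$ are handled by Sobolev embedding from Steps 1--2). Again pass to an equivalent unknown satisfying a homogeneous Dirichlet condition which at leading order solves a one-dimensional (in $z$) convection–diffusion equation $\partial_t w - \eps\partial_z^2 w = G$ with $w|_{z=0}=0$, the transport and lower-order terms going into $G$. Use the explicit heat kernel on the half-line with Dirichlet condition (the odd reflection) via Duhamel to get $\|w(t)\|_{L^\infty}\lesssim \|w(0)\|_{L^\infty}+\int_0^t\|G(s)\|_{L^\infty}ds$ — the $\eps$-independent bound follows because the Dirichlet kernel has $L^1_x$ norm $\le 1$ uniformly in $\eps$ and $t$ — and $\|G\|_{L^\infty}\lesssim (1+\|\nabla u\|_{1,\infty})(\text{Sobolev norms}) \lesssim N_m+N_m^2$ after using Steps 1--3.

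\emph{Conclusion.} Adding the three energy inequalities (Steps 1, 2) and the sup-norm bound (Step 4), absorbing the dissipation terms $\eps\|\nabla u\|_m^2$, $\eps\|\nabla\eta\|_{m-1}^2$ on the left against the $\eps$-factored right-hand sides from the pressure and commutators (this produces the $\eps\int_0^t\|\nabla^2u\|_{m-1}^2$ on the left and the polynomial prefactor $1+t+\eps^3t^2$), and integrating in time yields
$$N_m(t)\le C\Big(N_m(0)+(1+t+\eps^3t^2)\int_0^t\big(N_m(s)+N_m(s)^2\big)\,ds\Big).$$

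The main obstacle is Step 2 combined with Step 3: producing a good evolution equation for $\eta$ whose forcing $F$ has its pressure contributions under control, and more precisely showing that all the terms in which the pressure fails to be "transparent" are either of Euler type (hence handled by elliptic regularity at the right conormal level $m-1$) or carry a compensating power of $\eps$ that can be absorbed by the viscous dissipation. A secondary difficulty is the bookkeeping of normal derivatives: one must never need more than one genuine $\partial_z$ derivative of $u$ outside the $\eps\int\|\nabla^2 u\|_{m-1}^2$ term, which forces the specific choices of the quantities $\eta$ in Step 2 and $w$ in Step 4 and the use of the incompressibility constraint to trade $u_3$ for a conormal-regular quantity.
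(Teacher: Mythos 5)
Your outline captures the four announced steps (conormal energy in $H^m_{co}$, normal derivatives via a Dirichlet-vanishing quantity $\eta$, pressure, $L^\infty$), and Steps 1 and 3 are essentially correct. Two points, one minor and one substantive.

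On Step 2: you take $\eta=(Su\,n+\alpha u)_\tau$, which is what the paper uses in the general-domain section. In the half-space the paper instead sets $\eta=\omega_h-2\alpha u_h^\perp$ using the vorticity, whose evolution equation carries the pressure only through the tangential gradient $2\alpha\nabla_h^\perp p$ (not $\nabla^2 p\cdot n$), which simplifies the interaction with Step 3. Your choice works too, but it forces the Hessian of the pressure into the $\eta$-equation and hence requires a $\|\nabla^2 p_1\|_{m-1}$ bound and an integration by parts for the $p_2$ part; this is the general-domain mechanism, and your brief description of the pressure terms in $F$ ("only $\eps\partial_z\nabla p$-type and tangential derivatives") is not accurate for this choice.

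The substantive gap is in Step 4. You reduce to a quantity $w$ with Dirichlet data solving $\partial_t w-\eps\partial_{zz}w=G$ and propose to use the Dirichlet heat kernel on the half-line with the maximum principle. That yields $\|w\|_{L^\infty}$ and $\|Z_i w\|_{L^\infty}$ for $i=1,2$, but it cannot yield $\|Z_3 w\|_{L^\infty}$ with $Z_3=\varphi(z)\partial_z$, because $[Z_3,\eps\partial_{zz}]w=-\eps(2\varphi'\partial_{zz}+\varphi''\partial_z)w$, and $\eps\partial_{zz}w$ is \emph{not} uniformly bounded in the boundary layer (from the ansatz $u\sim u^E+\sqrt{\eps}\,V(\cdot,z/\sqrt{\eps})$ one has $\partial_{zz}u\sim\eps^{-1/2}$). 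Moreover if you push the genuine transport term $u_3\partial_z w$ into $G$, then $\|G\|_{L^\infty}$ depends on $\|Z_3 w\|_{L^\infty}$, so you are not getting a closed bound anyway without the extra structure. The paper's key idea is to \emph{keep} the frozen, linearized transport $z\,\partial_z u_3(t,y,0)\,\partial_z$ in the leading operator, so that it is a Fokker--Planck operator (not the pure heat operator), whose exact fundamental solution is computed and shown (Lemma \ref{FP}) to satisfy $\|z\partial_z S(t,\tau)f_0\|_{L^\infty}\lesssim\|f_0\|_{L^\infty}+\|z\partial_z f_0\|_{L^\infty}$ uniformly in $\eps$; then the remainder $R=(u_h-u_h(\cdot,0))\cdot\nabla_h w+(u_3-z\partial_z u_3(\cdot,0))\partial_z w$ vanishes to second order in $z$ near the boundary, and one trades each extra power of $\varphi(z)$ for a conormal derivative (the estimate $\|\varphi Z^\alpha\eta\|_{L^\infty}\lesssim\|\eta\|_{m_0+3}$). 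Without this Fokker--Planck structure and the invariance of the $z\partial_z$-bound under the semigroup, the $\|\cdot\|_{1,\infty}$ estimate does not close, so as written your Step 4 does not prove the claim.
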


 \subsection{Conormal energy estimate}

 \begin{prop}
 
 \label{conorm+}
 For every $m\geq 0$,   a smooth  solution of \eqref{NS}, \eqref{N} satisfies the estimate
 \begin{eqnarray*}
& &  { d \over dt }  \|u(t)\|_{m}^2  +  c_{0}\,\eps \int_{0}^t \| \nabla  u \|_{m}^2
     \\
     & & \lesssim  
   \|\nabla p \|_{m-1} \|u \|_{m}   +
    \big( 1 +   \|u \|_{W^{1, \infty}} \big) \big( \|u\|_{m}^2 + \|\partial_{z} u \|_{m-1}^2 \big) 
  \end{eqnarray*}
 for some $c_{0}>0$ independent of $\eps$.
 \end{prop}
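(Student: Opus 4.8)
The plan is to apply the conormal operator $Z^\alpha$ with $|\alpha| \le m$ to the momentum equation \eqref{NS}, take the $L^2$ scalar product with $Z^\alpha u$, sum over $\alpha$, and track all commutator terms. First I would write $Z^\alpha(\partial_t u + u\cdot\nabla u + \nabla p - \eps\Delta u) = 0$ and integrate against $Z^\alpha u$. The time-derivative term gives $\frac12\frac{d}{dt}\|Z^\alpha u\|^2$ directly since $Z^\alpha$ has coefficients independent of $t$. For the transport term, I would commute $Z^\alpha$ past $u\cdot\nabla$: the principal part $u\cdot\nabla Z^\alpha u$ contributes $\int u\cdot\nabla \frac{|Z^\alpha u|^2}{2} = 0$ after integration by parts, using $\nabla\cdot u = 0$ and $u\cdot n = 0$ on the boundary (the conormal fields $Z_1, Z_2$ are tangential and $Z_3$ vanishes at $z=0$, so no boundary term survives). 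The commutator $[Z^\alpha, u\cdot\nabla]u$ must be estimated in $L^2$ and paired against $Z^\alpha u$; this is where the factor $(1+\|u\|_{W^{1,\infty}})(\|u\|_m^2 + \|\partial_z u\|_{m-1}^2)$ on the right-hand side comes from. The key structural point, standard in conormal analysis, is that $[Z^\alpha, u\cdot\nabla]u$ is a sum of terms each of which is either a product of a low-order factor measured in $L^\infty$ (costing $\|u\|_{W^{1,\infty}}$ or $\|\nabla u\|_{L^\infty}$, hence $\|u\|_{W^{1,\infty}}$) with a high-order conormal factor, or the reverse; the only genuinely delicate piece is the term where all derivatives land on the last $u$ producing $\partial_z u$ with up to $m-1$ conormal derivatives, which is exactly why $\|\partial_z u\|_{m-1}^2 = \|\nabla u\|_{m-1}^2$-type quantities appear. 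I would use the product/commutator estimates for $H^m_{co}$ (of Moser type) together with the Sobolev embedding $H^2_{co} \hookrightarrow L^\infty$-style bounds to close these.

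For the pressure term, $\int Z^\alpha \nabla p \cdot Z^\alpha u$, the obstruction is that $Z^\alpha$ does not commute with $\nabla$, so one cannot simply integrate by parts to kill it against $\nabla\cdot u = 0$. I would write $Z^\alpha \nabla p = \nabla Z^\alpha p + [Z^\alpha,\nabla]p$; the first piece, after integration by parts, gives $-\int Z^\alpha p\, \nabla\cdot Z^\alpha u = \int Z^\alpha p\, [\nabla\cdot, Z^\alpha] u$ (no boundary term since $Z^\alpha u \cdot n$ vanishes or $Z_3$ vanishes at the boundary), and the commutator $[\nabla\cdot,Z^\alpha]u$ involves at most $m$ conormal derivatives of $u$ with one fewer normal derivative, bounded by $\|u\|_m$; similarly $[Z^\alpha,\nabla]p$ is controlled by $\|\nabla p\|_{m-1}$. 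Altogether these contribute $\lesssim \|\nabla p\|_{m-1}\|u\|_m$, matching the stated bound. (The precise control of $\|\nabla p\|_{m-1}$ itself is deferred to the pressure-estimate step of the overall program, as announced in the introduction.)

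The viscous term $\eps(Z^\alpha\Delta u, Z^\alpha u)$ is where I expect the main technical obstacle, because of the boundary term and because $Z^\alpha$ does not commute with $\Delta$. I would write $\Delta = \partial_z^2 + \Delta_y$ and handle the tangential part $\Delta_y$ easily (tangential derivatives essentially commute with tangential $Z$'s up to lower-order, and integration by parts produces $-\eps\|\nabla_y Z^\alpha u\|^2$ plus boundary terms on $\partial\Omega$ that are controlled using the Navier condition \eqref{N}). For the normal part, integrating by parts in $z$ twice and carefully commuting $Z_3 = \varphi(z)\partial_z$ past $\partial_z^2$ (which produces terms with factors $\varphi'$, $\varphi''$ that are bounded), I would extract the good term $c_0\eps\|\nabla u\|_m^2$ — using the nondegeneracy $\varphi(z)>0$ for $z>0$, $\varphi'(0)\ne 0$ so that $\|Z_3^{a}\partial_z u\|$ and $\|\partial_z Z_3^{a} u\|$ are comparable up to lower-order conormal terms — while the boundary contributions at $z=0$ are again absorbed via \eqref{N} (which controls $u_\tau$ and $(Su\cdot n)_\tau$ on $\partial\Omega$); the trace and Young inequalities as in Remarks~\ref{Korn} and \ref{bord} let one absorb the bad boundary terms into $c_0\eps\|\nabla u\|_m^2$ and the $\eps$-independent right-hand side. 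The leftover commutator terms from $[Z^\alpha,\Delta]u$ paired with $Z^\alpha u$ are, after one integration by parts, of size $\eps\|\nabla u\|_m(\|u\|_m + \|\nabla u\|_{m-1})$, which by Young's inequality splits into a small multiple of $\eps\|\nabla u\|_m^2$ (absorbed) plus $\eps(\|u\|_m^2 + \|\partial_z u\|_{m-1}^2)$, hence $\lesssim \|u\|_m^2 + \|\partial_z u\|_{m-1}^2$ since $\eps \le 1$. Collecting the transport, pressure, and viscous contributions and summing over $|\alpha|\le m$ yields the stated differential inequality.
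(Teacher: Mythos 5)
Your proposal follows essentially the same route as the paper's proof: apply $Z^\alpha$, perform the $L^2$ energy estimate with the commuted divergence condition and the commuted Navier boundary condition, treat the pressure through $[Z^\alpha,\nabla]p$ and $[\nabla\cdot,Z^\alpha]u$ (using that $Z^\alpha u_3$ vanishes at $z=0$ so no boundary term appears), and absorb the viscous and transport commutators via integration by parts, the trace and Young inequalities, and the conormal product estimate of Lemma \ref{gag}. The only cosmetic difference is that the paper controls the delicate term $u\cdot[Z^\alpha,\nabla]u$ by exploiting $|u_3|\lesssim \varphi(z)\|u\|_{W^{1,\infty}}$ to land on $\|u\|_m$, whereas you bound it by $\|u\|_{L^\infty}\|\partial_z u\|_{m-1}$, which is equally admissible for the stated right-hand side.
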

 
% As we shall see later, the fact that  in the right hand side of the above estimate,  we only 
%  have $\| \nabla p \|_{m-1}$  comes  from  a cancellation  due to the simple geometry.
%   In the general case the estimate involves $ \|\nabla^2 p \|_{m-1}.$ 
 
 \subsubsection*{Proof of Proposition \ref{conorm+} }
 In the proof, we shall use the notation $x=(y, z)\in \mathbb{R}^{d-1} \times (0, + \infty)$,
 $ u= (u_{h}, u_{3})\in \mathbb{R}^{d-1}\times \mathbb{R}$.
 
 The case $m=0$ just follows from Proposition \ref{energie} and Remark \ref{bord},
  and the term containing the pressure does not show up. By induction,  let us assume that
  it is proven for $k \leq m-1$.
  By applying $Z^\alpha $ to \eqref{NS} for $|\alpha | = m $, we find
 \beq
 \label{NSm}
 \partial_{t} Z^\alpha u + u \cdot \nabla Z^\alpha u + \nabla Z^\alpha p =  \eps \Delta  Z^\alpha u 
  + \mathcal{C}
  \eeq
  where the term $\mathcal{C}$ involving commutators can be written as
  $$ \mathcal{C} =  \sum_{i=1}^3 \mathcal{C}_{i}$$
  where
  \beq  
   \mathcal{C}_{1}=    -  [Z^\alpha, u \cdot \nabla ] u, \quad \mathcal{C}_{2}= -  [Z^\alpha, \nabla ]p, 
   \quad \mathcal{C}_{3}= \eps  [ Z^{\alpha}, \Delta ] u.
   \eeq
  From the divergence free condition in \eqref{NS}, we get
  \beq
  \label{dm}
  \nabla \cdot Z^\alpha u=  \mathcal{C}_d, \quad \mathcal{C}_d= -[ Z^\alpha, \nabla \cdot]  u.\eeq
  Finally, let us notice that  from the boundary condition \eqref{N} which reads explicitely in the case of a half-space
  \beq
  \label{N+}
  u_{3}= 0, \quad  \partial_{z} u_{h}=  2 \alpha u_{h}, \quad x \in \partial \Omega,
  \eeq
  we get
  \beq
  \label{N+m}
  Z^\alpha u_{3}= 0, \quad \partial_{z}Z^\alpha u_{h}= 2 \alpha Z^\alpha u_{h}+ \mathcal{C}_{b}, 
   \quad  \mathcal{C}_{b} = - \big([\partial_{z}, Z^\alpha] u_{h}\big)_{/\partial\Omega}, \quad x \in \partial \Omega.
   \eeq
   As in the proof of Proposition \ref{energie} and Remark \ref{bord},  we get from the standard energy estimate
    and the boundary condition \eqref{N+m}:
   \begin{eqnarray}
   \label{enerm1}
  & & {d \over dt } \big( {1 \over 2} \|Z^\alpha u\|^2 \big) + \eps \| \nabla Z^\alpha u \|^2
    + \eps   |Z^\alpha u_{h}|^2_{L^2(\partial \Omega)}\\
 \nonumber   & &  \lesssim  \big| \big(\mathcal{C}, Z^\alpha u \big)\big|   +
     \big|  \big( Z^\alpha p, \mathcal{C}_{d} \big) | + \eps  |\mathcal{C}_{b}|_{L^2(\partial \Omega)} \, | Z^\alpha u_{h}|
     _{L^2(\partial \Omega)} + \|u \|_{m}^2.
     \end{eqnarray}
   Indeed, since $\nabla \cdot u = 0$,  $u_{3}=0$,  and  $ Z^\alpha u_{3}=0$ on $\partial \Omega$, we  get 
    that 
  $$ \big( u \cdot \nabla Z^\alpha u , Z^\alpha u \big) = 0, \quad
   \big( \nabla Z^\alpha p, Z^\alpha u\big)= -\big(Z^\alpha p , \mathcal{C}_{d}\big).$$
   Note that when $\partial \Omega$ is not flat,    the boundary condition \eqref{N} does not imply that
    $Z^\alpha u \cdot n =0$ on $\partial \Omega$ thus a boundary term  shows up in 
     the integration by parts   and hence the estimate for the term involving the
      pressure will be worse (see the next section). 
      
   To estimate the last term in the right-hand side of \eqref{enerm1}, we can use as in Remark \ref{bord}
    the trace theorem and the Young inequality to get
   $$ \eps  |\mathcal{C}_{b}|_{L^2(\partial \Omega)} \, | Z^\alpha u_{h}|_{L^2(\partial \Omega)} \leq {1 \over 2 } \eps  \|\nabla Z^\alpha u \|^2 + C \|u\|_{m}^2 +
   C  \eps  | \mathcal{C}_{b}|_{L^2 (\partial \Omega)}^2$$
    and hence, we find
    \beq
   \label{enerm2}
   {d \over dt } \big( {1 \over 2} \|Z^\alpha u\|^2 \big) + { 1 \over 2 } \eps \| \nabla Z^\alpha u \|^2
    + \eps   |Z^\alpha u_{h}|_{L^2(\partial \Omega)}^2\lesssim  \| u\|_{m}^2    +   \big| \big(\mathcal{C}, Z^\alpha u \big)\big|   +
     \eps | \mathcal{C}_{b}|_{L^2 (\partial \Omega)}^2 +   \big|  \big( Z^\alpha p, \mathcal{C}_{d} \big) |.
     \eeq 
  
 To conclude, we need to estimate the commutators.
 First,  since
  $ [Z_{3}, \nabla \cdot ] u= - \varphi' \partial_{z} u_{3}=  \varphi' \nabla_{h} \cdot u_{h}$
  (thanks to the divergence free condition) and $[Z_{i}, \nabla \cdot ]=0$
   for $i=1, \dots, d-1$,  we easily  get that  for $m \geq 1$, 
 \beq
 \label{Cd}
 \| \mathcal{C}_{d} \| \lesssim \| u\|_{m}
 \eeq
 and hence, we obtain
 \beq
 \label{Cd2}
 \big|  \big( Z^\alpha p, \mathcal{C}_{d} \big) | \lesssim  \|u\|_{m}  \|\nabla p \|_{m-1}.
 \eeq
  We also get that 
  $$\big( [\partial_{z}, Z_{i}]u_{h}\big)_{/\partial\Omega} = 0, \quad
   \big( [\partial_{z}, Z_{3}]u_{h}\big)_{/\partial\Omega} =- \big(\varphi' \partial_{z} u_{h}\big)_{/\partial\Omega}$$
   since $\varphi$ vanishes on the boundary. Therefore, 
    from \eqref{N+}, we get 
   $$ \big( [\partial_{z}, Z_{3}]u_{h}\big)_{/\partial\Omega} = - 2 \alpha  
   \big(\varphi'  u_{h}\big)_{/\partial\Omega}.$$
   By using this last property and   the fact that $\varphi$ vanishes on the boundary, we find
 $$ | \mathcal{C}_{b}|_{L^2(\partial \Omega)} \lesssim  | u_{/\partial \Omega } |_{H^{m-1}(\partial \Omega)} $$
 and hence, we get from the trace theorem that
 \beq
 \label{Cb} \eps  | \mathcal{C}_{b}|_{L^2(\partial \Omega)}^2   \lesssim \eps \| \partial_{z} u \|_{m-1} \, \|u \|_{H^{m-1}(\partial \Omega)}.
 \eeq
 It remains to estimate $\mathcal{C}$.  First, we observe that
  \beq
 \label{C2}
 \|\mathcal{C}_{2}\| \lesssim \| \nabla p \|_{m-1}.
 \eeq
 Next,  since  we have 
 $$ [Z_{i}, \Delta ]= 0, \quad [Z_{3}, \Delta ] u= - 2 \varphi' \, \partial_{zz} u - \varphi'' \partial_{z}u, $$
 we also get by using repeatidly this property that 
 \beq
 \nonumber
 \big| \big( \mathcal{C}_{3}, Z^\alpha u \big) \big|  \lesssim     \tilde{ \mathcal{C} }_{3}  + 
    \eps \|\partial_{z} u \|_{m-1} \|u\|_{m} +
  \|u\|_{m}^2
  \eeq
 where $\tilde{\mathcal{C}}_{3}$ is given by 
 $$  \tilde{\mathcal{C}}_{3} = \sum_{ \beta, \, 0 \leq  |\beta| \leq m-1 } \  \eps \big| \big( c_{\beta }  \partial_{zz}  Z_{3}^\beta u, 
 Z^\alpha u \big) \big| $$
 for some harmless functions  $c_{\beta}$ depending on derivatives of $\varphi$. To estimate
  $ \tilde{\mathcal{C}}_{3}$, we use integration by parts. If $\beta\neq 0$,  $ |\beta| \neq 1$, since $\varphi$ vanishes
   on the boundary, we immediately  get that 
   $$ \eps \big| \big( c_{\beta }  \partial_{zz}  Z_{3}^\beta u, 
 Z^\alpha u \big) \big| \lesssim  \eps \big(  \| \partial_{z} u \|_{m} + \|u \|_{m}\big) \, \|\partial_{z} u \|_{m-1 } 
  .$$
   For $\beta = 0$ or $|\beta| = 1$, there is an additional term on the boundary,  we have    $$ 
 \big| \big( c_{\beta }  \partial_{zz}   Z^\beta  u, 
 Z^\alpha u \big) \big| \lesssim  \eps \big(  \| \partial_{z} u \|_{m} + \|u \|_{m}\big) \, \| \partial_{z} u \|_{m-1}
  + \eps | \partial_{z}u_{h} |_{L^2(\partial \Omega)}\, | Z^\alpha u|_{L^2(\partial \Omega)}.$$
  From  the boundary condition \eqref{N+} and the trace theorem, we also find 
  $$ \eps | \partial_{z}u_{h} |_{L^2(\partial \Omega)}\, | Z^\alpha u |_{L^2(\partial \Omega)} \lesssim \eps | u  |_{L^2(\partial \Omega)} \, |Z^\alpha  u |_{L^2(\partial \Omega)} \lesssim 
    \eps \|\partial_{z} u \|_{m} \|u\|_{m}.$$
   We  have  consequently proven that
  \beq
  \label{C3}
  \big| \big( \mathcal{C}_{3}, Z^\alpha u \big) \big|  \lesssim  \eps \|\partial_{z} u \|_{m}\, \big( \| \partial_{z} u \|_{m-1}
   + \|u \|_{m} \big) + \|u\|_{m}^2 + \| \partial_{z} u \|_{m-1}^2.
   \eeq
   It remains to estimate $\mathcal{C}_{1}$. By an expansion, we find 
    that $\mathcal{C}_{1}$ is under the form 
       \beq
       \label{C1+} \mathcal{C}_{1} =  \sum_{ \beta + \gamma = \alpha, \, \beta \neq 0}
    c_{\beta, \gamma} Z^\beta u \cdot Z^\gamma \nabla u  +  u \cdot  [Z^\alpha , \nabla  ]u
    .\eeq
    To estimate the last term, we  first observe that 
     \beq
     \label{C1+2}  
          \|u \cdot  [Z^\alpha , \nabla  ]u \|   \lesssim \sum_{| \beta |  \leq m-1} \| u_{3} \partial_{z} Z^\beta  u \|
          \eeq
     and then  that  because of the first boundary condition in \eqref{N+} we have
      $$ | u_{3}(t,x)| \leq \varphi(z) \|u_{3}\|_{W^{1, \infty}}.$$
      This yields
    \beq
    \label{C1+3}   \|u \cdot  [Z^\alpha , \nabla  ]u \| \lesssim  \|u_{3}\|_{W^{1, \infty}} \,\|u\|_{m}.\eeq
    To estimate the other terms, we can use the following generalized  Sobolev-Gagliardo-Nirenberg
     inequality, we refer for example to \cite{Gues} for the proof:
     \begin{lem}
     \label{gag}
       For $u, \,v\in L^ \infty \cap H^k_{co}$ , we have
     \beq
     \label{gues}
      \| Z^{\alpha_{1}} u\, Z^{\alpha_{2}} v \| \lesssim  \|u\|_{L^\infty} \, \|v\|_{k} + \|v\|_{L^\infty} \|u\|_{k}, \quad
       |\alpha_{1}| + |\alpha_{2}|=k.
      \eeq
      \end{lem}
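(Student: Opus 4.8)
The plan is to prove Lemma \ref{gag} by a standard interpolation argument for conormal Sobolev spaces, following the scheme used for the usual Gagliardo--Nirenberg inequalities. The key observation is that the conormal vector fields $Z_1 = \partial_{y_1}$, $Z_2 = \partial_{y_2}$, $Z_3 = \varphi(z)\partial_z$ span, together with multiplication by bounded functions, an algebra of first--order operators that behaves exactly like the algebra of flat derivatives on $\mathbb{R}^3$ as far as product estimates are concerned: commutators $[Z_i, Z_j]$ are again of the form (smooth bounded coefficient)$\times Z_k$, so all the combinatorics of the classical proof go through verbatim. I would first reduce to the case $\alpha_1 + \alpha_2 = k$ with, say, $|\alpha_1| = j$ and $|\alpha_2| = k - j$, $0 < j < k$ (the endpoint cases $j = 0$ or $j = k$ being trivial since one factor is then just placed in $L^\infty$), and prove the pointwise-in-$L^2$ bound
\beq
\label{gagstep}
\| Z^{\alpha_1} u \|_{L^{2k/j}} \lesssim \|u\|_{L^\infty}^{1 - j/k} \, \|u\|_{k}^{j/k},
\eeq
for each $u \in L^\infty \cap H^k_{co}$, and symmetrically for $v$ with exponent $2k/(k-j)$. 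Granting \eqref{gagstep}, H\"older's inequality with the conjugate exponents $2k/j$ and $2k/(k-j)$ gives
\beq
\| Z^{\alpha_1} u \, Z^{\alpha_2} v \| \lesssim \|u\|_{L^\infty}^{1-j/k}\|u\|_k^{j/k}\,\|v\|_{L^\infty}^{j/k}\|v\|_k^{1-j/k} \lesssim \|u\|_{L^\infty}\|v\|_k + \|v\|_{L^\infty}\|u\|_k,
\eeq
the last step by Young's inequality \eqref{young} applied to the two factors $\big(\|u\|_{L^\infty}\|v\|_k\big)^{1-j/k}$ and $\big(\|v\|_{L^\infty}\|u\|_k\big)^{j/k}$.

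To establish \eqref{gagstep} I would argue by induction on $k$. The base case $k=1$ is the conormal analogue of $\|\nabla u\|_{L^2}^2 \lesssim \|u\|_{L^\infty}\|\,|\nabla|^2 u\|_{L^2}$-type estimates but in the simplest form is just: for $|\alpha_1|=1$, integrate by parts, $\|Z^{\alpha_1}u\|_{L^2}^2 = \int (Z_i u)(Z_i u) \lesssim \int |u|\,|Z_i Z_i u| + (\text{boundary and commutator terms})$, and all boundary terms vanish or are controlled because $Z_3$ carries the factor $\varphi$ which vanishes on $\partial\Omega$; the commutator terms are again of order $\le 1$ in the conormal fields and are absorbed. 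This yields $\|Z_i u\|_{L^2}^2 \lesssim \|u\|_{L^\infty}\|u\|_2$; to get the homogeneous-looking $\|u\|_k$ on the right one works with the full norm, which is harmless since $\lesssim$ hides $\Omega$-dependent constants. For the inductive step, write $|\alpha_1| = j$ with $j \ge 2$, peel off one conormal field $Z^{\alpha_1} = Z_i Z^{\beta}$ with $|\beta| = j-1$, apply the case $k=2$ estimate to $w := Z^{\beta}u$ combined with the induction hypothesis on $w$ (which lies in $L^\infty$? — this is the delicate point, see below), and chain the resulting exponents; this is precisely the standard telescoping in the proof of Gagliardo--Nirenberg. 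One must check at each stage that the bounded functions $c_\beta$ produced by commutators, and the cutoff $\varphi$, only improve matters.

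The main obstacle — and the reason this is quoted as a lemma with an external reference rather than proved inline — is the usual subtlety of interpolation inequalities in $L^\infty$: the intermediate quantities $Z^\beta u$ that appear in the induction are a priori only known to be in $L^2$, not in $L^\infty$, so one cannot naively feed them back into the induction hypothesis. The clean way around this, which is what I would do, is to prove instead the family of inequalities $\|Z^{\alpha_1}u\|_{L^{p}} \lesssim \|u\|_{L^\infty}^{1-\theta}\|u\|_k^\theta$ for the right range of $(p,\theta)$ by a single Littlewood--Paley / dyadic argument adapted to the conormal setting — decomposing in the tangential frequencies and handling the $z$-direction by the vector field $\varphi\partial_z$ — or, more elementarily, to follow the inductive scheme in \cite{Gues} where the $L^\infty$ hypothesis is imposed once on $u$ and $v$ themselves and never on the intermediate $Z^\beta u$, the trick being to interpolate $\|Z^\beta u\|_{L^{2k/|\beta|}}$ directly against $\|u\|_{L^\infty}$ and $\|u\|_k$ at each level rather than recursing through an $L^\infty$ bound on $Z^\beta u$. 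Since the conormal fields form an algebra with the stated commutator structure and $\varphi$ kills all boundary contributions, every estimate in that scheme transfers without change; I would therefore present the proof as a direct adaptation of \cite{Gues}, checking only the two points where the half-space geometry enters (the vanishing of $\varphi$ on $\partial\Omega$ in the integrations by parts, and the fact that no genuine normal derivative $\partial_z$ ever appears unweighted).
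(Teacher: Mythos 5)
The paper itself offers no proof of Lemma \ref{gag}: it simply refers to \cite{Gues}, so there is no internal argument to compare with, and what you are really being asked to supply is the classical Moser-type proof adapted to the conormal fields. Your final scheme --- the interpolation inequality $\| Z^{\alpha_1} u\|_{L^{2k/j}} \lesssim \|u\|_{L^\infty}^{1-j/k}\,\|u\|_{k}^{j/k}$ for $|\alpha_1|=j$, combined with H\"older at exponents $2k/j$ and $2k/(k-j)$ and the Young inequality \eqref{young} --- is exactly the standard argument behind that reference, and it does transfer to the fields $Z_1,Z_2,Z_3$ for the reasons you give: commutators of the $Z_i$ are again combinations of the $Z_j$ with bounded coefficients, and $\varphi(0)=0$ kills all boundary contributions in the integrations by parts, so no unweighted $\partial_z$ ever appears. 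So the route is sound, and the reduction (H\"older plus Young, absorbing harmless lower-order terms into the full norm $\|u\|_k$) is correct as written.

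Two caveats before this can count as a proof. First, the base case you actually compute is not the one the induction needs: the $L^2$ bound $\|Z_i u\|^2 \lesssim \|u\|_{L^\infty}\|u\|_{2}$ is the trivial endpoint $p=2$ and cannot seed the telescoping toward $L^{2k/j}$ with $0<j<k$; the genuine seed is the $L^4$ estimate $\|Z u\|_{L^4}^2 \lesssim \|u\|_{L^\infty}\big( \|u\|_{2} + \|u\|_{1}\big)$, obtained by writing $\int |Zu|^4 = \int Zu \, |Zu|^2 \, Zu$ and moving one conormal field off the last factor (again with no boundary term, since $Z_1,Z_2$ are tangential and $Z_3$ carries $\varphi$, and with commutator terms controlled by the same quantities). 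Second, as you yourself observe, the first inductive scheme you sketch --- recursing through an $L^\infty$ bound on the intermediate quantities $Z^\beta u$ --- genuinely fails, because those quantities are only controlled in $L^2$-based spaces; the only viable version is the second one you describe, Nirenberg's telescoping carried out entirely at the level of the $L^{2k/|\beta|}$ norms with $\|u\|_{L^\infty}$ and $\|u\|_{k}$ as the sole extreme quantities. Write up that version, with the corrected $L^4$ base case, and the proof is complete and consistent with the scheme of \cite{Gues} that the paper invokes.
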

    For $\beta \neq 0$, this immediately yields
   \begin{eqnarray}
   \label{C1+4}
    \| c_{\beta, \gamma} Z^\beta u \cdot Z^\gamma \nabla u \|
  &  \lesssim &   \|Z^\beta u_{h} \cdot Z^\gamma \nabla_{h} u \| +   \|  Z^\beta u_{3} \cdot Z^\gamma \partial_{z} u \|
  \\
  \nonumber & \lesssim  &  \|Zu \|_{L^\infty} \, \|u\|_{m} +  \|Zu\|_{L ^\infty} \,  \|\partial_{z} u \|_{m-1} + 
     \|\partial_{z} u \|_{L^\infty} \|Zu_{3}\|_{m-1} \\
  \nonumber   & \lesssim &  \|\nabla u \|_{L^\infty}\big( \|u\|_{m} + \|\partial_{z} u \|_{m-1} \big)
     \end{eqnarray}
 and hence, we find the estimate
 \beq
 \label{C1}
 \| \mathcal{C}_{1}\| \lesssim      \|\nabla u \|_{L^\infty}\big( \|u\|_{m} + \|\partial_{z} u \|_{m-1} \big).
 \eeq
From \eqref{enerm2} and  \eqref{Cd2},  \eqref{Cb}, \eqref{C2}, \eqref{C3}, \eqref{C1}
 and the remark \eqref{bord}, 
 we find
\begin{eqnarray*}
& &  {d \over dt } \big( {1 \over 2} \|u\|_{m}^2 \big) + { 1 \over 2 } \eps \| \nabla  u \|_{m}^2
      \lesssim
 \eps \|\partial_{z} u \|_{m}\big( \|\partial_{z} u \|_{m-1} + \|u \|_{m}\big)  \\
 & &+ \|u\|_{m}^2  
  + \|\nabla p \|_{m-1} \|u  \|_{m}  + \big( 1 +    
     \| u \|_{W^{1, \infty}}\big)\big( \|u\|_{m}^2  + \|\partial_{z} u \|_{m-1}^2  \big).
  \end{eqnarray*}
 To get the result, it suffices to   use  the Young inequality to absorb the term
  $\eps \| \partial_{z} u \|_{m}$ in the  left hand side.
%  the above inequality and the inequality    and to integrate in time to get
% \begin{eqnarray*}
%& &   \|u(t)\|_{m}^2   + \eps \int_{0}^t \| \nabla  u \|_{m}^2
%     \\
%     & & \lesssim \|u_{0}\|_{m}^2 + \int_{0}^t
%      \Big( \eps  \|\partial_{z} u \|_{m-1}^2  + \|u\|_{m}^2
%  + \|\nabla p \|_{m-1}^2   + 
%     \| u \|_{W^{1, \infty}}\big( \|u\|_{m}^2  + \|\partial_{z} u \|_{m-1}^2 \big) \Big).
%  \end{eqnarray*}
%  Since
%  $  \eps \int_{0}^t  \|\partial_{z} u \|_{m-1}^2$  can be estimated by the induction assumption, 
%   the result follows. 
This ends the proof of Proposition \ref{conorm+}.

 \subsection{Normal derivative estimates}
In this section, we  shall provide an estimate for $\| \partial_{z} u \|_{m-1}$. 

 A first useful remark is that  because of the divergence free condition we have
 \beq
 \label{dzu3}
  \|\partial_{z} u_{3} \|_{m-1} \leq \|u\|_{m}.
  \eeq
  Consequently, it suffices to estimate $\partial_{z}u_{h}.$
   Let us introduce the vorticity 
   $$\omega= \mbox{curl } u=\left( \begin{array}{lll} \partial_{2} u_{3}- \partial_{3} u_{2} \\
    \partial_{3}u_{1}- \partial_{1} u_{3} \\ \partial_{1} u_{2} - \partial_{2} u_{1} \end{array} \right) $$ which solves
  \beq
  \label{curl}
  \partial_{t} \omega + u \cdot \nabla \omega  - \omega \cdot \nabla u = \eps \Delta \omega, \quad x \in \Omega.
  \eeq
  On the boundary, we find thanks to \eqref{N+} that
  $$ \omega_{h} = 2 \alpha\, u_{h}^\perp, \quad x \in \partial \Omega.$$
   This leads us to introduce the  unknown
   $$ \eta=  \omega_{h} - 2 \alpha\, u_{h}^\perp.$$
   Indeed, the main advantages of this quantity is that on the boundary, we have
   \beq
   \label{etab}
   \eta= 0, \quad x \in \partial \Omega
   \eeq
   and that  we have the estimate  
   \beq
   \label{etau}
   \|\partial_{z} u_{h} \|_{m-1} \lesssim \|u\|_{m} + \|\eta \|_{m-1}.
   \eeq
   Consequently,  we shall estimate in this section $ \|\eta \|_{m-1}$.
    We have the following result:
  \begin{prop}
  \label{norm+}
  For every $m \geq 1, $ every smooth solution of \eqref{NS}, \eqref{N}, satisfies  the following estimate : 
  \begin{eqnarray*}
   &&   { d \over dt } \|\eta(t) \|^2_{m-1} + c_{0}\,\eps  \|\nabla \eta \|_{m-1}^2  \\
  \nonumber     & &  \lesssim      \|\nabla p \|_{m-1} \| \eta \|_{m-1} +
  \big(  1 +  \|u \|_{2, \infty}   +  \|\partial_{z}  u \|_{1, \infty} \big) \big( \|\eta \|_{m-1}^2 + \|u \|_{m}^2\big)
  \end{eqnarray*}
  \end{prop}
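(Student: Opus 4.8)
The plan is to derive the evolution equation for $\eta = \omega_h - 2\alpha u_h^\perp$, apply $Z^\alpha$ with $|\alpha| \le m-1$, and run a standard $L^2$ energy estimate exactly as in the proof of Proposition \ref{conorm+}, exploiting the homogeneous Dirichlet condition \eqref{etab} which kills all boundary contributions in the integration by parts. First I would compute the equation for $\eta$: from the vorticity equation \eqref{curl}, $\omega_h$ solves $\partial_t \omega_h + u \cdot \nabla \omega_h - (\omega \cdot \nabla u)_h = \eps \Delta \omega_h$, and since $u_h^\perp$ satisfies a transported equation with forcing coming from \eqref{NS} (namely $\partial_t u_h + u \cdot \nabla u_h + \nabla_h p = \eps \Delta u_h$, rotated by $90^\circ$), subtracting $2\alpha$ times this gives

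\begin{equation}
\label{etaeq}
\partial_t \eta + u \cdot \nabla \eta - \eps \Delta \eta = F, \qquad F = (\omega \cdot \nabla u)_h + 2\alpha\, (\nabla_h p)^\perp + 2\alpha\, (u \cdot \nabla u_h)^\perp - 2\alpha (u \cdot \nabla u_h)^\perp + \dots
\end{equation}

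where the right-hand side $F$ collects: a stretching-type term $(\omega \cdot \nabla u)_h$ bounded by $\|\nabla u\|_{L^\infty}(\|u\|_m + \|\eta\|_{m-1})$ via \eqref{etau}, a pressure term $2\alpha (\nabla_h p)^\perp$ contributing $\|\nabla p\|_{m-1}$, and lower-order terms involving products of $u$ and its first derivatives which are all $\le C(1 + \|u\|_{2,\infty} + \|\partial_z u\|_{1,\infty})(\|\eta\|_{m-1}^2 + \|u\|_m^2)$ after using Lemma \ref{gag}. (I have written \eqref{etaeq} schematically; the exact bookkeeping of the $\alpha$-terms would be done carefully in the real proof.)

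Next I would apply $Z^\alpha$ to \eqref{etaeq} for $|\alpha| \le m-1$, producing commutator terms $\mathcal{C}_1' = -[Z^\alpha, u \cdot \nabla]\eta$, $\mathcal{C}_3' = \eps[Z^\alpha, \Delta]\eta$ of exactly the same structure as $\mathcal{C}_1, \mathcal{C}_3$ in Proposition \ref{conorm+}, plus $Z^\alpha F$. The crucial simplification is that $Z^\alpha \eta$ vanishes on $\partial\Omega$ (since $\varphi(0)=0$ means any $Z_3$-derivative preserves the boundary value, and tangential derivatives of $0$ are $0$), so in the energy estimate $\frac{d}{dt}\frac12\|Z^\alpha\eta\|^2 + \eps\|\nabla Z^\alpha\eta\|^2 = (\mathcal{C}_1' + \mathcal{C}_3' + Z^\alpha F, Z^\alpha\eta)$ there is no boundary term from the Laplacian, no boundary term from $u\cdot\nabla$, and no need to introduce a trace-theorem correction as was done in Remark \ref{bord}; this is what makes the estimate cleaner than the one for $u$ itself. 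The commutator $\mathcal{C}_3'$ is handled by integration by parts just as in \eqref{C3}: the $\beta=0,|\beta|=1$ boundary terms now vanish outright because $\eta$ and its first $Z$-derivatives vanish on the boundary. The term $\mathcal{C}_1'$ is controlled by Lemma \ref{gag} and the bound $|u_3| \le \varphi(z)\|u_3\|_{W^{1,\infty}}$ exactly as in \eqref{C1+3}--\eqref{C1}, yielding $\|\mathcal{C}_1'\| \lesssim \|\nabla u\|_{L^\infty}(\|\eta\|_{m-1} + \|u\|_m)$ (using that $\partial_z\eta$ costs no more than $\|\nabla\eta\|_{m-1}$, absorbed by $\eps$-Young against the dissipation).

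The main obstacle I anticipate is the careful treatment of $Z^\alpha F$, specifically the stretching term $Z^\alpha(\omega\cdot\nabla u)_h$ and the pressure term. For the stretching term one must rewrite $\omega\cdot\nabla u$ using $\omega_h = \eta + 2\alpha u_h^\perp$ and $\omega_3$ which is purely tangential (a combination of $Z$-derivatives of $u_h$, hence controlled by $\|u\|_m$), and then the dangerous piece $\omega_3 \partial_z u$ must be seen to be fine because $\omega_3$ involves only tangential derivatives while $\partial_z u_h$ is controlled by \eqref{etau}; a naive count would suggest a loss, so this requires the structural observation that the vertical component of the vorticity is conormal. The pressure term is simply $2\alpha (\nabla_h p)^\perp$ whose $Z^{m-1}$ norm is $\le \|\nabla p\|_{m-1}$, giving the stated $\|\nabla p\|_{m-1}\|\eta\|_{m-1}$ contribution after Cauchy–Schwarz — note that unlike in the conormal estimate there is no commutator $\mathcal{C}_2$-type subtlety here since we differentiate $\nabla_h p$ tangentially. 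Summing over $|\alpha| \le m-1$ and using Young's inequality to absorb the $\eps\|\nabla\eta\|_m$-type terms into $c_0\eps\|\nabla\eta\|_{m-1}^2$ on the left gives the claimed inequality.
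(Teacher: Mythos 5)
Your overall skeleton (exact equation \eqref{eqeta} for $\eta$ with forcing $\omega\cdot\nabla u_h+2\alpha\nabla_h^\perp p$, energy estimate on $Z^\alpha\eta$ using the homogeneous Dirichlet condition to kill all boundary terms, integration by parts for the Laplacian commutator, Lemma \ref{gag} for the stretching term) matches the paper. But there is a genuine gap at precisely the point the paper identifies as the hardest one: the commutator $\mathcal{C}_{1}=[Z^\alpha,u\cdot\nabla]\eta$. You propose to treat it ``exactly as in \eqref{C1+3}--\eqref{C1}''. In the velocity estimate this works because the dangerous piece $Z^\beta u_{3}\,Z^\gamma\partial_{z}u$ is bounded via Lemma \ref{gag} by $\|Zu_3\|_{L^\infty}\|\partial_z u\|_{m-1}+\|\partial_z u\|_{L^\infty}\|Zu_3\|_{m-1}$, and both $\|\partial_z u\|_{m-1}$ and $\|\partial_z u\|_{L^\infty}$ are quantities controlled in the scheme. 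The analogue here is $Z^\beta u_{3}\,Z^\gamma\partial_{z}\eta$ with $|\beta|+|\gamma|=m-1$, and the same argument would require $\|\partial_z\eta\|_{m-2}$ or $\|\partial_z\eta\|_{L^\infty}$, i.e.\ \emph{two} normal derivatives of $u$, which are exactly the quantities that are not uniformly bounded in the boundary layer and which do not appear in the right-hand side of the Proposition. Your fallback, absorbing $\partial_z\eta$ by ``$\eps$-Young against the dissipation'', also fails: the commutator term is not prefixed by $\eps$, so Young's inequality against $c_0\eps\|\nabla\eta\|_{m-1}^2$ produces a factor $\eps^{-1}\|Zu_3\|_{L^\infty}^2\|\eta\|_{m-1}^2$, destroying uniformity in $\eps$. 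Consequently the bound $\|\mathcal{C}_1\|\lesssim\|\nabla u\|_{L^\infty}(\|\eta\|_{m-1}+\|u\|_m)$ you claim is not obtainable this way; note that the statement itself carries the larger coefficient $1+\|u\|_{2,\infty}+\|\partial_z u\|_{1,\infty}$, which is a signal that something beyond the naive product estimate is needed.

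The missing idea in the paper is to never let $\partial_z\eta$ appear unweighted: write $Z^\beta u_{3}\,Z^\gamma\partial_z\eta=\big(u_3/\varphi\big)\,\varphi\,Z^\gamma\partial_z\eta$ and systematically commute $\varphi$ through $Z^\gamma$ so that every term is of the form $Z^{\tilde\beta}(u_3/\varphi)\,Z^{\tilde\gamma}Z_3\eta$ with $|\tilde\beta|+|\tilde\gamma|\leq m-1$; the factor $Z_3\eta=\varphi\partial_z\eta$ is conormal and hence controlled by $\|\eta\|_{m-1}$, while $u_3/\varphi$ and its conormal derivatives are controlled by the Hardy inequality (using $Z^\alpha u_3=0$ on the boundary) and the divergence-free condition, giving $\|Z(u_3/\varphi)\|_{m-2}\lesssim\|\partial_z u_3\|_{m-1}\lesssim\|u\|_m$ and $\|Z(u_3/\varphi)\|_{L^\infty}\lesssim\|u\|_{2,\infty}$, together with the $\|Z\eta\|_{L^\infty}$ factor that accounts for the $\|\partial_z u\|_{1,\infty}$ in the statement. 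A secondary remark: the ``dangerous'' stretching piece $\omega_3\partial_z u$ you flag is in fact harmless — the paper bounds $\|\omega\cdot\nabla u_h\|_{m-1}$ directly by Lemma \ref{gag} together with \eqref{etau}, since $\|\omega\|_{m-1}$ and $\|\partial_z u_h\|_{m-1}$ are controlled by $\|u\|_m+\|\eta\|_{m-1}$ — so the real difficulty sits in the transport commutator, not in the source term.
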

%  Note that by using again  Proposition \ref{conorm+}, the definition of $\eta$ and
%   \eqref{dzu3}, \eqref{etau}, an alternative way  to formulate the above estimate is:
%   \begin{eqnarray}
%  \label{dzconorm+} & &  \|u(t) \|_{m}^2 +   \|\partial_{z}  u(t) \|^2_{m-1} + \eps \int_{0}^t  \big( \| \nabla u(s) \|_{m}^2 + \|\nabla^2 u(s) \|_{m-1}^2 \big) \\
%\nonumber   & & \lesssim  \|\partial_{z} u_{0} \|^2_{m-1} + \|u_{0}\|_{m-1}^2 
%   \quad  + \int_{0}^t
%   \Big(  \|\nabla p \|^2_{m-1} + 
%  \big( 1 +   \|u \|_{2, \infty}   +  \|\partial_{z}  u \|_{1, \infty} \big) \big( \|\partial_{z} u  \|_{m-1}^2 + \|u \|_{m}^2\big)
%  \end{eqnarray} 
  
  \subsubsection*{Proof of  Proposition \ref{norm+} }
  From the definition of $\eta$, we find that it solves the equation
  \beq
  \label{eqeta}
  \partial_{t} \eta + u \cdot \nabla \eta - \eps \Delta \eta= \omega \cdot \nabla u_{h} + 2 \alpha \nabla_{h}^
  {\perp} p
  \eeq
  with the boundary condition \eqref{etab}.  By a standard  $L^2$ energy estimate, we find
  $$ { d \over dt }{ 1 \over 2 } \|\eta(t) \|^2 + \eps \|\nabla \eta \|^2 \lesssim 
   \Big(  \|\nabla p \| \, \|\eta \| +   \|\omega \cdot \nabla u_{h}\| \|\eta \| \Big).$$
   Furthermore,  by using that
   $$  \|\omega \cdot \nabla u_{h}\| \lesssim  \|\nabla u \|_{L^\infty}\, \|\om \|   \lesssim   \|\nabla u \|_{L^\infty}
    \big( \|\eta \| + \|u \|_{1} \big),$$
    we find the  result for $m=1$.

     Now, let us assume that  Proposition \eqref{norm+} is proven for $k \leq m-2$.
      We shall now  estimate  $ \|\eta \|_{m-1}$.
      By applying $Z^\alpha$  for $|\alpha |=m-1$ to \eqref{eqeta}, we find
      \beq
      \label{etameq}
       \partial_{t} Z^\alpha \eta + u \cdot \nabla Z^\alpha \eta  - \eps \Delta Z^\alpha \eta=
       Z^\alpha \big( \omega \cdot \nabla u_{h} \big)  + 2 \alpha Z^\alpha \nabla_{h}^\perp p+
        \mathcal{C}\eeq
        where $\mathcal{C}$ is the commutator:
        $$ \mathcal{C}= \mathcal{C}_{1} + \mathcal{C}_{2}, \quad \mathcal{C}_{1}=   [Z^\alpha , u \cdot \nabla ] \eta, 
         \quad \mathcal{C}_{2}=   - \eps [Z^\alpha, \Delta
         ] \eta.$$
         Since $Z^\alpha \eta$ vanishes on the boundary, the standard $L^2$ energy estimate for
       \eqref{etameq} yields
     \begin{eqnarray}
     \label{m1}
   &&  {d \over dt }{1 \over 2}  \|\eta(t) \|^2_{m-1} + \eps \|\nabla \eta \|_{m-1}^2  \\
     \nonumber     & &  \lesssim 
    \|\nabla p \|_{m-1} \| \eta \|_{m-1} +   \|\omega \cdot \nabla u_{h}\|_{m-1} \|\eta \|_{m-1}
    + |\big( \mathcal{C}, Z^\alpha \eta \big) \big| \Big).\end{eqnarray}
    To estimate the terms in the right-hand side, we first  write thanks to Lemma \ref{gag} that 
    \begin{eqnarray}
   \nonumber
       \|\omega \cdot \nabla u_{h}\|_{m-1} & \lesssim &   \|\omega \|_{L^\infty}\big(
     \|u_{h}\|_{m} + \|\partial_{z} u_{h}\|_{m-1}\big)+  \| \nabla  u_{h} \|_{L^\infty} \|\omega \|_{m-1} \\
    \label{n1}  & \lesssim & \|\nabla u \|_{L^\infty} \big( \|u_{h}\|_{m} +  \|\eta \|_{m-1}\big).  
     \end{eqnarray}
     Note that we have again used \eqref{etau} to get the last line.
     
     Next, we need to estimate the commutator $\mathcal{C}$.  As for  \eqref{C3}, we first
      get from integration by parts since $Z^\alpha \eta$ vanishes on the boundary  that
    \beq
    \label{nC2}
     |\big( \mathcal{C}_{2}, Z^\alpha \eta \big)\big|
      \lesssim  \eps \|\partial_{z} \eta \|_{m-1} \big( \|\partial_{z} \eta \|_{m-2} + \|\eta \|_{m-1}\big) + 
       \|\eta \|_{m-1}^2.
     \eeq
     It remains to estimate $\mathcal{C}_{1}$ which is the most difficult term.
  We can again write    
 $$ \mathcal{C}_{1} =  \sum_{ \beta + \gamma = \alpha, \, \beta \neq 0}
    c_{\beta, \gamma} Z^\beta u \cdot Z^\gamma \nabla  \eta  +  u \cdot  [Z^\alpha , \nabla  ]\eta
    .$$
    To estimate the last term, we  first observe that 
    $$   \|u \cdot  [Z^\alpha , \nabla  ]\eta \| \lesssim \sum_{k \leq m-2} \| u_{3} \partial_{z} Z_{3}^k \eta \|$$
     and  by using again that 
      \beq
      \label{u3b} | u_{3}(t,x)| \leq \varphi(z) \|u_{3}\|_{W^{1, \infty}}, \eeq
     we find 
    $$   \|u \cdot  [Z^\alpha , \nabla  ]\eta \| \lesssim  \|u_{3}\|_{W^{1, \infty}} \,\|\eta\|_{m-1}.$$ 
   To estimate  the other terms in the commutator, 
    we   write
   $$ \| c_{\beta, \gamma} Z^\beta u \cdot Z^\gamma \nabla  \eta \|
     \lesssim \| Z^\beta u_{h} \cdot Z^\gamma \nabla_{h}  \eta \|
      + \|Z^\beta u_{3} Z^\gamma  \partial_{z} \eta \|.$$
      Thanks to Lemma \ref{gag}, we have since $\beta \neq 0$ that 
  $$  \| Z^\beta u_{h} \cdot Z^\gamma \nabla_{h}  \eta \|
   \lesssim \|\nabla u \|_{L^\infty} \|\eta \|_{m-1} + \|\eta \|_{L^\infty} 
    \| Z u \|_{m-2 } \lesssim  \|\nabla u \|_{L^\infty} \big(  \|\eta \|_{m-1} +  \|u\|_{m} \big).$$
   The  remaning term is the most involved.  We want to get an estimate for which
    $\partial_{z} \eta$ does not appear.
    Indeed,  due to the expected behaviour in the boundary layer \eqref{expIS}, one cannot hope 
     an estimate which is uniform in $\eps$ for $\|\partial_{z} \eta \|_{L^\infty}$ or $\| \partial_{z} \eta \|_{m}.$
  We first write
  $$  Z^\beta u_{3} Z^\gamma  \partial_{z} \eta = { 1 \over \varphi(z)}  Z^\beta u_{3}
  \, \varphi(z)Z^\gamma  \partial_{z} \eta$$
   and then we can expand this term as a sum of terms under the form
    $$ c_{\tilde \beta, \tilde  \gamma} Z^{\tilde{\beta}} \big({ 1 \over   \varphi(z) } u_{3}\big) \, Z^{\tilde{\gamma}}\big(   \varphi
     \partial_{z} \eta\big)$$
    where $\tilde{\beta} + \tilde{\gamma} \leq m-1$,  $|\tilde{\gamma}| \neq m-1$ 
     and $c_{\tilde \beta,  \tilde \gamma}$ is some smooth bounded coefficient.
     
    Indeed,  we first notice that $Z^\alpha \varphi$ has the same properties than $\varphi$, thus
     the commutator  $[\varphi, Z^\gamma]$ can be expanded  under the form $ \tilde{\varphi}_{\tilde{\gamma}} Z^{\tilde{\gamma}}$
      with $| \tilde{\gamma }|< | \gamma |$
      where $\tilde{\varphi}_{\tilde{\gamma}}$ have the same properties as $\varphi$. 
      Then, we can  write
       $$ \tilde{\varphi}_{\tilde{\gamma}} Z^{\tilde{\gamma}} =  { \tilde{\varphi}_{\tilde{\gamma}} \over \varphi } 
        \Big( Z^{\tilde{\gamma}} 
        \big(  \varphi  \cdot\big) + [ \varphi, Z^{\tilde{\gamma}}]\Big)$$
        where the coefficient  $ \tilde{\varphi}_{\tilde{\gamma}} /\varphi$ is smooth and bounded.
         Finally, we  reiterate the process  to express the commutators
       $  [\varphi, Z^{\tilde{\gamma}}]$. Hence, after a finite number of steps, we indeed
        get that  $[\varphi, Z^\gamma]$ can be expanded as a sum of terms under the form
      $  c_{\tilde \gamma} Z^{\tilde{\gamma}} 
        \big(  \varphi \cdot\big) $ where $c_{\tilde \gamma}$ is smooth and bounded.
      In a similar way, we note that  $Z^\alpha (1/\varphi)$ has the same properties as $ 1 / \varphi$
       and hence, by the same  argument, we get that
        the commutator $[1/\varphi, Z^\beta ]$ can be expanded as a sum of terms under the form
         $ c_{\tilde \beta} Z^{\tilde{\beta}} \big( {1 \over  \varphi  } \cdot\big) $.

%    =  \sum* Z^{\tilde{\beta}} \big({ 1 \over \varphi(z) } u_{3}\big) \, Z^{\tilde{\gamma}} Z_{3}\eta$$
%     where $\tilde{\beta} + \tilde{\gamma} \leq m-1$,  $|\tilde{\gamma}| \neq m-1$.
%      and $*$ stands for some harmless
%      coefficients. 
     If $\tilde{\beta}= 0,$ and hence $|\tilde{\gamma} | \leq m-2$,  we have
    $$ \big\| Z^{\tilde{\beta}} \big({ 1 \over \varphi(z) } u_{3}\big) \, Z^{\tilde{\gamma}} Z_{3} \eta\big\|
     \lesssim \big\|{ 1 \over \varphi(z) } u_{3} \big\|_{L^\infty} \, \|  \eta \|_{m-1}.$$
     Moreover, since $u_{3}$ vanishes on the boundary, we have
     $$ \big\|{ 1 \over \varphi(z) } u_{3} \big\|_{L^\infty}  \lesssim  \|u \|_{W^{1, \infty}}.$$
    We have thus proven that for $\tilde{\beta}= 0$
     $$ \big\| Z^{\tilde{\beta}} \big({ 1 \over \varphi(z) } u_{3}\big) \, Z^{\tilde{\gamma}} Z_{3} \eta\big\|
     \lesssim \|u \|_{W^{1, \infty}}  \|  \eta \|_{m-1}.$$
     Next, for $\tilde{\beta} \neq 0$, we can use Lemma \ref{gag} to get
     $$\big\| Z^{\tilde{\beta}} \big({ 1 \over \varphi(z) } u_{3} \big) \, Z^{\tilde{\gamma}} Z_{3} \eta\big\|
     \lesssim  \big\| Z \big({ 1 \over \varphi(z) } u_{3} \big) \big\|_{L^\infty} \, \|Z_{3} \eta  \|_{m-2}
      +   \big\| Z \big({ 1 \over \varphi(z) } u_{3} \big) \big\|_{m-2}  \|Z \eta \|_{L^\infty}.$$
      And hence, since $Z^\alpha u_{3}$ vanishes on the boundary, we get  from the Hardy inequality  that
      $$  \big\| Z \big({ 1 \over \varphi(z) } u_{3} \big) \big\|_{m-2} \lesssim \|\partial_{z}u_{3} \|_{m-1}.$$
      Indeed,  For $i=1, 2$, we directly get that 
     $$   \big\| Z_{i} \big({ 1 \over \varphi(z) } u_{3} \big) \big\|_{m-2}
      =  \big\|{ 1 \over \varphi(z) }  Z_{i}u_{3} \big\|_{m-2}
      \lesssim \|\partial_{z}u_{3} \|_{m-1}.$$
      For $i=3$,  since $Z_{3}({1 \over \varphi})$ have the same properties as $1/\varphi$, we have
      $$ \big\| Z_{3} \big({ 1 \over \varphi(z) } u_{3} \big) \big\|_{m-2}
       \lesssim   \big\|{ 1 \over \varphi(z) }  Z_{3}u_{3} \big\|_{m-2} +    \big\|{ 1 \over \varphi(z) }  u_{3} \big\|_{m-2} $$
        and hence the Hardy inequality yields
     $$   \big\| Z_{3} \big({ 1 \over \varphi(z) } u_{3} \big) \big\|_{m-2}
 \lesssim  \| \partial_{z} Z_{3} u_{3} \|_{m-2} + \|\partial_{z} u_{3} \|_{m-2} \lesssim  \|\partial_{z} u_{3}\|_{m-1}.$$
  By using again the divergence free condition, we thus  get that
$$  \big\| Z \big({ 1 \over \varphi(z) } u_{3} \big) \big\|_{m-2} \lesssim \|\partial_{z}u_{3} \|_{m-1} \lesssim  \|u\|_{m}$$

 Consequently, we obtain that 
     $$ \big\| Z^{\tilde{\beta}} \big({ 1 \over \varphi(z) } u_{3} \big) \, Z^{\tilde{\gamma}} Z_{3} \eta\big\|
      \lesssim \big(  \|u \|_{2, \infty} + \|Z\eta \|_{L^\infty} \big) \big( \|\eta \|_{m-1} + \|u \|_{m}\big).$$
      We have thus proven that
      \beq
      \label{C1n} \| \mathcal{C}_{1} \| \lesssim 
      \big(  \|u \|_{2, \infty} + \|u\|_{W^{1, \infty}}  + \|Z\eta \|_{L^\infty} \big) \big( \|\eta \|_{m-1} + \|u \|_{m}\big).
     \eeq 
     To end the proof of Proposition \ref{norm+}, it suffices to collect \eqref{m1}, \eqref{n1}  \eqref{nC2} and \eqref{C1n}.

     \subsection{Pressure estimates}
     
     It remains to estimate the pressure
      and the $L^\infty $norms in the right hand side of the estimates of Propositions  \ref{norm+}
       and \ref{conorm+}
 
 The aim of this section is  to  give  the estimate  of $ \|\nabla p\|_{m-1} $.
 
 \begin{prop}
 \label{pressure}
 For every $m \geq 2$, there exists $C>0$ such that for every $\eps \in (0, 1]$,  a smooth solution
  of \eqref{NS}, \eqref{N} on $[0, T]$ satisfies the estimate
 $$ \| \nabla p(t) \|_{m-1} \leq C \Big(  \eps \| \nabla u(t) \|_{m-1} +  ( 1 + \|u(t) \|_{W^{1, \infty}} ) \big( \|u(t) \|_{m}+
  \| \partial_{z} u (t) \|_{m-1} \Big), \quad \forall t \in [0, T].$$
  \end{prop}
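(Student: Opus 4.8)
The plan is to derive an elliptic problem for $p$ by taking the divergence of the Navier--Stokes equation \eqref{NS}. Since $\nabla \cdot u = 0$, one gets $\Delta p = -\nabla\cdot(u\cdot\nabla u) = -\sum_{i,j}\partial_i u_j\,\partial_j u_i$, an $L^2$ (indeed better) source term that is quadratic in $\nabla u$ but, after integrating by parts once, can be written as $\nabla\cdot F$ with $F = -(u\cdot\nabla) u$, so that only first derivatives of $u$ enter. For the boundary condition one dots \eqref{NS} with the normal $n$ on $\partial\Omega$: using $u\cdot n = 0$ and the Navier condition \eqref{N+} (in the half-space $n = -e_3$, so this is $\partial_z p$ at $z=0$), the terms $\partial_t u\cdot n$ and $(u\cdot\nabla u)\cdot n$ are controlled by boundary traces of $u$ and $\nabla u$ that are already handled, and the viscous contribution $\eps(\Delta u)\cdot n|_{\partial\Omega}$ produces the $\eps\|\nabla u\|_{m-1}$ term in the statement (this is where one must be a little careful, rewriting $\Delta u\cdot n$ in terms of tangential derivatives and the Navier relation so that it costs only conormal derivatives of $\nabla u$). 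Thus $p$ solves a Neumann problem $\Delta p = \nabla\cdot F$ in $\Omega$, $\partial_n p = g$ on $\partial\Omega$, with $\|F\|_{m-1}$ and the appropriate norm of $g$ already bounded by the right-hand side of the Proposition.

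The next step is to run a conormal elliptic regularity estimate for this Neumann problem. I would split $p = p_1 + p_2$ as announced in the introduction: $p_1$ solves $\Delta p_1 = \nabla\cdot F$ with homogeneous Neumann data, and $p_2$ is harmonic with $\partial_n p_2 = g$; $p_1$ carries the ``Euler-like'' part and $p_2$ the boundary-layer/Navier part. For $p_1$, apply $Z^\alpha$ with $|\alpha|\le m-1$; the conormal fields do not commute with $\Delta$, producing commutators $[\Delta, Z^\alpha]p_1$ of the type $\varphi'\partial_{zz}$, $\varphi''\partial_z$ already met in the proof of Proposition \ref{conorm+}. One closes the estimate by testing the equation for $Z^\alpha p_1$ against $Z^\alpha p_1$ (or rather against a suitable weight), integrating by parts, moving the commutators and the $\nabla\cdot F$ source to the right, and absorbing the top-order normal-derivative term $\eps\|\partial_z\nabla p\|$-type quantities --- but note that here there is \emph{no} $\eps$, so instead one uses the standard trick that $\partial_{zz}p_1$ is recovered algebraically from $\Delta p_1$ once the tangential second derivatives are controlled, i.e. $\partial_{zz}p_1 = \nabla\cdot F - \partial_{11}p_1 - \partial_{22}p_1$, which lets one bootstrap the single normal derivative we are allowed to control. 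For $p_2$ one uses that harmonic functions with conormally-controlled Neumann data are conormally bounded, again with the normal derivative recovered from harmonicity. Summing the two pieces gives $\|\nabla p\|_{m-1}\lesssim \|F\|_{m-1} + (\text{trace norm of }g)$, which is the claim.

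The main obstacle I expect is \textbf{the boundary term $g = \eps(\Delta u\cdot n)|_{\partial\Omega} + \dots$}: naively $\Delta u$ involves two normal derivatives, which we are not allowed to control and which anyway blow up as $\eps\to 0$ in the boundary layer. The resolution --- and the technical heart of the proof --- is to use the Navier boundary condition \eqref{N+} to trade the bad normal derivatives for tangential ones plus lower-order terms: differentiate the boundary relation $\partial_z u_h = 2\alpha u_h$ tangentially, use the divergence-free condition $\partial_z u_3 = -\nabla_h\cdot u_h$ to handle the normal component, and rewrite $\Delta u\cdot n$ on $\partial\Omega$ as (tangential Laplacian of $u$) $+$ ($\partial_z$ of something controlled) so that it is bounded by $\|\partial_z u\|_{m-1}$ and $\|u\|_m$ with an overall factor $\eps$. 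A secondary difficulty is the bookkeeping of the commutators $[\Delta,Z^\alpha]$ and the use of the Hardy inequality to handle the $1/\varphi$ weights exactly as in the proof of Proposition \ref{norm+}; this is routine but must be done carefully so that no uncontrolled normal derivative of $p$ or of $u$ survives. Once $g$ is correctly estimated, the elliptic part is standard conormal elliptic regularity and the proof closes by Gronwall-free direct estimate.
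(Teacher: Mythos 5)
Your plan reproduces the paper's proof: take the divergence to obtain a Neumann problem for $p$, use the Navier condition together with $\nabla\cdot u=0$ to rewrite the viscous boundary term $\eps\,\Delta u\cdot n|_{z=0}$ as $-2\alpha\eps\,\nabla_h\cdot u_h$, split $p=p_1+p_2$ into the Euler-type and Navier-type parts, and conclude by applying the resulting Stokes-pressure estimate with $F=-u\cdot\nabla u$ and the product estimate of Lemma \ref{gag}. The only (immaterial) deviation is that you propose a conormal energy/bootstrap argument for the elliptic estimates, recovering $\partial_{zz}p$ from the equation, whereas the paper uses the explicit Fourier representation and the Schur lemma --- an alternative the paper itself notes is just standard elliptic regularity.
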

 Note that  by  combining  Proposition \ref{pressure},   Proposition \ref{conorm+},  Proposition \ref{norm+}
  and \eqref{dzu3}, \eqref{etau},  we find that 
  \begin{eqnarray}
  \label{dzconorm++} & &
   \|u(t) \|_{m}^2  +   \|\partial_{z}  u(t) \|_{m-1}^2   + 
    \eps \int_{0}^t\big( \| \nabla u \|_{m}^2 + \| \nabla^2 u \|_{m- 1}^2\big)
     \\\nonumber   & &
     \lesssim  \|u_{0} \|_{m}^2 +   \|\partial_{z}  u_{0} \|_{m-1}^2  
 + \int_{0}^t  
  \big( 1 +   \|u \|_{2, \infty}   +  \|\partial_{z}  u \|_{1, \infty} \big) \big( \|\partial_{z} u  \|_{m-1} ^2+ \|u \|_{m}^2\big).
  \end{eqnarray}
  In particular, we see from this estimate that  it only remains to control 
   $    \|u \|_{2, \infty}   +  \|\partial_{z}  u \|_{1, \infty}$.

 The proof of Proposition \ref{pressure} relies on the following estimate for
  the Stokes problem in a half-space.
  Consider the system
  \beq
  \label{stokessys}
  \partial_{t } u   - \eps \Delta u + \nabla p = F, \quad \nabla \cdot u = 0, \quad z>0, 
   \eeq
    with the Navier boundary condition \eqref{N} which reads
   \beq
   \label{navierS}u_{3}= 0, \quad \partial_{z} u_{h}= 2 \alpha u_{h}, \quad z=0\eeq
   where $F$  is  some given source term.
   
   We have the following estimates for the Stokes problem
   \begin{theoreme}
 \label{theoStokes}
For every $m \geq 2$, there exists $C>0$ such that for every $t \geq 0$, we have the estimate
$$ \ \| \nabla p \|_{m-1} \leq C   \Big( \| F\|_{m-1} +  \| \nabla \cdot F \|_{m-2} +  \eps  \| \nabla u \|_{m-1} + \| u \|_{m-1}
  \Big).$$

 \end{theoreme}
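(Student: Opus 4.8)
The plan is to derive an elliptic boundary value problem satisfied by $p$ and to split $p$ into a part with the regularity of the Euler pressure and a part carrying the viscous (Navier) correction. For the first step, taking the divergence of $\eqref{stokessys}$ and using $\nabla\cdot u=0$ yields the Poisson equation $\Delta p=\nabla\cdot F$ in $\{z>0\}$. Restricting the third component of $\eqref{stokessys}$ to $z=0$ and using $u_{3}=0$ there (so that $\partial_{t}u_{3}=0$ and $\partial_{1}^{2}u_{3}=\partial_{2}^{2}u_{3}=0$ on the boundary) gives $\partial_{z}p=F_{3}+\eps\,\partial_{zz}u_{3}$ at $z=0$; writing $\partial_{zz}u_{3}=-\partial_{1}(\partial_{z}u_{1})-\partial_{2}(\partial_{z}u_{2})$ (from $\nabla\cdot u=0$) and using the Navier condition $\eqref{navierS}$, one finds $\partial_{zz}u_{3}=-2\alpha\,\nabla_{h}\cdot u_{h}$ on $z=0$, whence the Neumann condition $(\partial_{z}p)_{/\partial\Omega}=\big(F_{3}-2\alpha\eps\,\nabla_{h}\cdot u_{h}\big)_{/\partial\Omega}$ (together with decay at infinity).

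Let then $p_{2}$ be the decaying harmonic function on $\{z>0\}$ with $(\partial_{z}p_{2})_{/\partial\Omega}=-2\alpha\eps\,(\nabla_{h}\cdot u_{h})_{/\partial\Omega}$, and set $p_{1}=p-p_{2}$. Then $p_{1}$ solves $\Delta p_{1}=\nabla\cdot F$ with $(\partial_{z}p_{1})_{/\partial\Omega}=(F_{3})_{/\partial\Omega}$, which is exactly the structure of the pressure for the incompressible Euler equation. The bound $\|\nabla p_{1}\|_{m-1}\lesssim\|F\|_{m-1}+\|\nabla\cdot F\|_{m-2}$ is obtained by the standard conormal elliptic argument: applying a tangential conormal field $Z^{\beta}$ (here $|\beta|\le m-1$ and $Z_{1}=\partial_{1},Z_{2}=\partial_{2}$) preserves both the divergence form of the source and the fact that the Neumann datum is the normal trace of that source, so the boundary terms cancel in the $L^{2}$ energy identity and give $\|\nabla Z^{\beta}p_{1}\|\lesssim\|Z^{\beta}F\|$; the remaining normal conormal derivatives are recovered from $\partial_{zz}p_{1}=\nabla\cdot F-\Delta_{h}p_{1}$, which is where the loss to $\|\nabla\cdot F\|_{m-2}$ occurs.

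The delicate term is $\|\nabla p_{2}\|_{m-1}$. Since $p_{2}$ is harmonic one has $\partial_{zz}p_{2}=-\Delta_{h}p_{2}$, and an induction on the number of normal fields $Z_{3}=\varphi(z)\partial_{z}$ (using $\varphi(0)=0$ to absorb the lower order terms it produces) reduces $\|\nabla p_{2}\|_{m-1}$ to the control of $\|Z^{\beta}\nabla p_{2}\|$ for tangential $\beta$, $|\beta|\le m-1$. For such $\beta$, $Z^{\beta}p_{2}$ is harmonic with $(\partial_{z}Z^{\beta}p_{2})_{/\partial\Omega}=-2\alpha\eps\,(\nabla_{h}\cdot Z^{\beta}u_{h})_{/\partial\Omega}$, so the elementary half-space estimate for the Neumann problem (immediate from the Fourier transform in the tangential variables, $\|\nabla q\|\lesssim\big|(\partial_{z}q)_{/\partial\Omega}\big|_{H^{-1/2}(\partial\Omega)}$ for $q$ harmonic and decaying) yields $\|Z^{\beta}\nabla p_{2}\|\lesssim\eps\,\big|(\nabla_{h}\cdot Z^{\beta}u_{h})_{/\partial\Omega}\big|_{H^{-1/2}(\partial\Omega)}$. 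The key point is that the datum is $\eps$ times a \emph{tangential divergence}: hence $\big|(\nabla_{h}\cdot Z^{\beta}u_{h})_{/\partial\Omega}\big|_{H^{-1/2}(\partial\Omega)}\lesssim\big|(Z^{\beta}u_{h})_{/\partial\Omega}\big|_{H^{1/2}(\partial\Omega)}\lesssim\|Z^{\beta}u_{h}\|_{H^{1}(\Omega)}$ by the trace theorem, and since $Z^{\beta}$ is tangential (hence commutes with $\nabla$), $\|Z^{\beta}u_{h}\|_{H^{1}(\Omega)}\lesssim\|Z^{\beta}u_{h}\|+\|Z^{\beta}\nabla u_{h}\|\lesssim\|u\|_{m-1}+\|\nabla u\|_{m-1}$. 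This gives $\|\nabla p_{2}\|_{m-1}\lesssim\eps\|\nabla u\|_{m-1}+\|u\|_{m-1}$, and adding the estimates for $p_{1}$ and $p_{2}$ proves the theorem; applied with $F=-u\cdot\nabla u$ together with Lemma \ref{gag} it also yields Proposition \ref{pressure}.

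The main obstacle is precisely this regularity balance in the last step: the estimate is true only because the viscous correction to the Neumann datum is $\eps$ times a \emph{tangential} derivative of $u_{h}$, so that the half-derivative lost in the trace theorem is exactly compensated by the half-derivative gained in inverting $\nabla_{h}\cdot$; any cruder treatment would force control of a full $H^{m}(\Omega)$ norm of $u$, which is not available in the conormal framework. A secondary, routine but bookkeeping-heavy point is to make rigorous, in the two previous steps, the reduction of normal conormal derivatives to tangential ones in the presence of the weight $\varphi$ carried by $Z_{3}$.
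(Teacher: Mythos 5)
Your proof follows the paper's argument in every essential respect: you derive the same Poisson--Neumann problem for $p$, perform the same decomposition $p=p_{1}+p_{2}$ into the ``Euler'' and ``Navier'' parts, and identify the same pivotal point -- that the Neumann datum for $p_{2}$ is $\eps$ times a \emph{tangential} divergence of $u_{h}$, so the half-derivative lost at the trace is exactly compensated when one inverts $\nabla_{h}\cdot$.

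The only substantive differences are in how the two elliptic bounds are realized. For $p_{1}$, the paper writes out the explicit half-space Green kernel $G_{\xi}(z,z')$, invokes Schur's lemma, and then recovers normal derivatives from $\partial_{zz}p_{1}=\nabla\cdot F-\Delta_{h}p_{1}$; you replace the kernel step by an energy identity in which testing $\Delta Z^{\beta}p_{1}=Z^{\beta}\nabla\cdot F$ against $Z^{\beta}p_{1}$ makes the boundary contributions cancel, precisely because the Neumann datum is the normal trace of $Z^{\beta}F$, after which you proceed identically with the $\partial_{zz}$ substitution. For $p_{2}$, the paper uses the closed-form Fourier solution $\hat{p}_{2}(\xi,z)=2i\alpha\eps\,(\xi/|\xi|)\cdot\hat{u}_{h}(\xi,0)\,e^{-|\xi|z}$ together with the Bessel identity and trace theorem, whereas you invoke the abstract Neumann estimate $\|\nabla q\|\lesssim|\partial_{z}q|_{H^{-1/2}(\partial\Omega)}$ for decaying harmonic $q$, followed by the same trace-theorem step. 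These are interchangeable in the half-space; your softer formulation is marginally more portable to general domains, which is essentially what the paper does later in Proposition \ref{pressureb}. Both presentations leave the $Z_{3}$-induction -- using $\varphi(0)=0$ and $\partial_{zz}=\nabla\cdot F-\Delta_{h}$ (or $-\Delta_{h}$ for the harmonic part) to trade normal for tangential conormal derivatives -- at the same level of ``routine bookkeeping,'' and you correctly flag this as the remaining verification rather than sweeping it aside.
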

 The proof can be obtained  from standard elliptic regularity results.  Nevertheless, 
  in  the case of  a half-space, the proof follows easily from explicit computations in   the Fourier side.
    We shall thus sketch the proof for the sake of completeness.
 \subsubsection*{Proof of Theorem \ref{theoStokes}}
 By taking the divergence of \eqref{stokessys}, we get that $p$ solves
  $$  \Delta p =  \nabla \cdot F, \quad z>0.$$
  Note that   in this proof, the time will be only a parameter, for notational convenience,
    we shall not
    write down explicitely that all the involved functions  depend on it.
     
  From the third component of the velocity equation, we get that
  \beq
  \label{dzp0} \partial_{z}p(y,0) =   \eps \partial_{zz} u_{3}(y,0) +  \eps \Delta_{h} u_{3}(y,0) -
   \partial_{t}u_{3}(y,0)+F_{3}(y,0).\eeq
 From,  the boundary condition for the velocity,  we have  that
  $$  \Delta_{h} u_{3}(y,0) = 0, \quad \partial_{t} u_{3}(y,0)=0.$$  Moreover by applying $\partial_{z}$ to the divergence free condition, 
  we get that
  $$ \partial_{zz} u_{3}(y,0)=  -  \nabla_{h} \cdot  \partial_{z}u_{h}(y,0)$$
   and hence  from the  second boundary condition in \eqref{navierS}, we obtain
   $$  \partial_{zz} u_{3}(y,0)= -  2 \alpha \nabla_h \cdot u_{h}.$$
    Consequently,  we can use \eqref{dzp0} to express the pressure on the boundary  and we obtain the following elliptic equation with Neumann  boundary condition for
     the pressure:
     \beq
     \label{p}
       \Delta p =  \nabla \cdot F, \quad z>0, \quad  \partial_{z}p(y,0)=  -  2 \alpha \eps \nabla_{h}\cdot u_{h}(y,0)
         + F_{3}(y,0).
        \eeq
   Note that we can express $p$ as $ p= p_{1}+ p_{2}$ where $p_{1}$ solves
   \beq
   \label{p1+}
     \Delta p_{1}=  \nabla \cdot F, \quad z>0, \quad  \partial_{z}p_{1}(y,0)= F_{3}(y,0)
    \eeq
    and $p_{2}$ solves
    \beq
   \label{p2+}
     \Delta p_{2}= 0, \quad z>0, \quad  \partial_{z}p_{2}(y,0)=  2 \alpha \eps \nabla_{h}\cdot u_{h}(y,0)
       . 
    \eeq
    The meaning of this decomposition is that  $p_{1}$  corresponds
     to the gradient part of the  usual Leray-Hodge decomposition of the vector field $F$
      whereas $p_{2}$ is purely determined by the Navier  boundary condition. The  desired estimates
       for $p_{1}$ and $p_{2}$ can be obtained from standard elliptic theory. In the case of
        our very simple geometry, the proof is very easy thanks to the explicit representation of the solutions
         in Fourier space.

  To estimate $p_{1}$, we can use an explicit representation of the solution in Fourier space
  (we refer for example to the appendix of \cite{Masmoudi-Rousset}). 
 By   taking  the Fourier transform in the $(x_{1}, x_{2})$ variable, we get  that $\hat{p}_{1}$ solves
 \beq
 \label{eqp1} \partial_{{zz}} \hat{p}_{1} - |\xi|^2 \hat{p}_{1} = i \xi \cdot \hat{F}_{h} + \partial_{z} \hat{F}_{3}, \quad
  z>0, \quad \partial_{z}\hat{p}_{1}(\xi, 0)= \hat{F}_{3}(\xi,0).\eeq
  Consequently the resolution of this ordinary differential equation gives
  $$  \hat{p}_{1}(\xi,z)= \int_{0}^{+ \infty} G_{\xi}(z,z') \hat{F}(\xi,z')\, dz'$$
  where    $G_{\xi}(z,z')$ is defined as
  \begin{eqnarray*}
  G_{\xi}(z,z')= &&   -\Big(  e^{- |\xi| z'} { \mbox{cosh}( |\xi| z ) \over |\xi|} i\xi, \,  
   e^{- |\xi| z'} \mbox{cosh }(|\xi|z) \Big), \quad  z<z', \\
    & &    -\Big(  e^{- |\xi| z} { \mbox{cosh}( |\xi| z' ) \over |\xi|} i\xi, \,  
   -  e^{- |\xi| z} \mbox{sinh }(|\xi|z') \Big), \quad  z>z'.
   \end{eqnarray*}
   Note that the product  $G_{\xi} \hat{F}$ has to be understood as the product of a $(1, 3)$ matrix
    and a $(3,1)$ matrix.
    
   In  particular, we obtain  that
    $$  \partial_{z}\hat{p}_{1}(\xi,z)= \int_{0}^{+ \infty}  K_{\xi}(z,z') \hat{F}(\xi,z')\, dz' +\hat{F}_{3}(\xi,z)$$
    where $K_{\xi}(z,z')$ is defined by
    \begin{eqnarray*}
     K_{\xi}(z,z') = & & \partial_{z} G_{\xi}(z,z'), \quad z<z' \\
      & & \partial_{z} G_{\xi}(z,z') , \quad z>z'.
    \end{eqnarray*}
     Since
     $$ \sup_{z,\, \xi} \big( \|K_{\xi}(z, \cdot) \|_{L^1(0, + \infty)} + |\xi| \|G_{\xi}(z, \cdot)\|_{L^1(0,+ \infty)}  \big)<+\infty$$
      and
      $$ 
       \sup_{z', \,\xi}\big( \|K_{\xi}(\cdot, z') \|_{L^1(0,+\infty)} + |\xi | \|G_{\xi}(\cdot, z')\|_{L^1(0,+ \infty)}  \big) <+ \infty,$$
    we get by   
       using the Schur Lemma  that
    $$\| \partial_{z}  \hat{p}_{1} (\xi, \cdot )\|_{L^2(0, +\infty)}
     + |\xi| \| \hat{p}_{1}(\xi, \cdot ) \|_{L^2(0, + \infty)}
      \leq C  \|\hat{F}(\xi , \cdot) \|_{L^2(0, +\infty)},$$
      where $C$ does not depend on $\xi$.
       Hence,   by using the Bessel identity we obtain from the previous estimate  that 
      $$  \|\nabla p_{1} \|_{L^2} \lesssim \| F \|_{L^2}.$$
    
    In a similar way, we get by multiplication in the Fourier side that
    $$ \| \nabla_{h}^{k} p_{1} \| \lesssim \| F \|_{k}, \quad \forall k \leq m-1.$$
     Moreover, by using \eqref{eqp1}, we also obtain that
     $$ \| \partial_{zz} p_{1} \|_{k} \lesssim  \| \nabla \cdot  F \|_{k}, \quad \forall k \leq m-2.$$
      Consequently,  since $[ \partial_{zz}, Z_{3}] = \varphi''  \partial_{z}+ 2 \varphi' \partial_{zz}$,
       the    result for  $p_{1}$ follows easily  by   applying $Z_{3}^{\alpha_{3}}$ to \eqref{eqp1}
        and by induction on $\alpha_{3}$. 
%       More generally,  we  get  the expansion
%      $$ (\varphi(z) \partial_{z})^l \Big( \begin{array}{ll} ik \\ \partial_{z} \end{array} \Big)
%        G_{k}(z,z')= K_{k}^{(l)}(z,z')+ \sum_{ j= 1 }^l  A^{(j)}_{k} (\varphi \partial_{z})^j \delta_{z-z'}$$
%        where $K_{k}^{(l)}$,  $A^{(j)}_{k}$ are  matrices satisfying  the estimates :
%     $$  \sup_{z' \geq 0} \int_{\mathbb{R}_{+}}  |K_{k}^{(l)}(z, z') | dz
%      +   \sup_{z \geq 0} \int_{\mathbb{R}_{+}}  |K_{k}^{(l)}(z, z') | dz' \leq |k |^l $$
%       and
%       $$ |A^{(j)}_{k}| \lesssim |k|^{l-j}.$$
       This yields finally  
    \beq
    \label{nablap1} \| \nabla p_{1} \|_{m-1}
    \lesssim    \| F  \|_{m-1} + \| \nabla \cdot F \|_{m-2}.\eeq  
  which is the desired estimate for $p_{1}$.
  
  Let us turn to the estimate of $p_{2}$. Again, by using the Fourier transform, we can solve explicitely
   \eqref{p2+}. We obtain that
   \beq
   \label{eqp2} \hat{p}_{2}(\xi, z) = 2 i \alpha \eps  {\xi\over |\xi| } \cdot \hat{u}_{h}(\xi,0)  e^{- |\xi  | z } 
    . \eeq
     From the Bessel identity, this yields
   $$ \| \nabla p_{2} \|_{m-1} \lesssim \eps \, | \alpha|\,  \| u_{h}(\cdot, 0)\|_{H^{ m - {1 \over 2}}(\mathbb{R}^2)}
    $$
     and hence  from the Trace Theorem, we obtain
  \beq
  \label{nablap2}
   \| \nabla p_{2} \|_{m-1} \lesssim \eps \, | \alpha|\, \|  \nabla u_{h}\|_{m-1}^{1 \over 2 }
    \| u_{h}\|_{m-1}^{1 \over 2 }.
    \eeq 
Consequently, we can  collect \eqref{nablap1}, \eqref{nablap2}  to get the result.
%$$ 
%\int_{0}^t   \| \nabla p \|_{m-1}^2 \lesssim \int_{0}^t  \Big( \|F\|_{m-1}^2 +  \eps^2  \| \nabla u \|_{m-1}^2
% + \eps \|\nabla f\|_{m-1}^2  
% + \|u \|_{m-1}^2 +  \| f \|_{m-1}^2 \Big).$$
 This ends the proof of Theorem \ref{theoStokes}.
 
% \begin{rem}
% \label{maxstokes}
% Note that from the estimate on $p_{1}$  just obtained  and the equation  \eqref{eqp1}, we also get
% that
% \beq
% \label{p1max}
%  \| \nabla^2 p_{1} \|_{m-1} \lesssim \| \nabla \cdot  F \|_{m-1} + \|F \|_{m}.
%  \eeq
% \end{rem}
%  and from the expression \eqref{eqp2} that
%  \begin{eqnarray}
% \nonumber   \| \nabla^2 p_{2} \|_{m-1} & \lesssim  &  \eps \big( | \alpha|\,  \|u_{h}(\cdot, 0) \|_{H^{m+ {1 \over 2 }}}
%    + |f |_{H^{m + {1 \over 2 } } }  \big) \\
%    \label{p2max} & \lesssim  &  \eps \| \nabla u \|_{m} + \| u \|_{m} +  |  f (\cdot , 0)|_{H^{ m + {1 \over 2}}(\mathbb{R}^2)}.
%    \end{eqnarray}
%    Such a maximal regularity estimate will be needed in the case that $\partial \Omega$ is not flat.
% 
 It remains the:
 \subsubsection*{Proof of   Proposition \ref{pressure} }
  We can first use  Theorem \ref{theoStokes} with     $ F= - u \cdot \nabla u $  
    to get
 $$   \| \nabla p \|_{m-1}  \lesssim   \|u \cdot \nabla u \|_{m-1} + \| \nabla u \cdot \nabla u \|_{m-2}+   \eps  \| \nabla u \|_{m-1}
 + \|u \|_{m-1} .$$
 Since, by using again Lemma \ref{gag}, we have
 \begin{eqnarray*}
 & &  \|u \cdot \nabla u \|_{m-1} \lesssim  \|u \|_{W^{1, \infty}} \big( \|u \|_{m-1} + \| \nabla u \|_{m-1}
  \big) \lesssim   \|u \|_{W^{1, \infty}} \big( \|u \|_{m} + \| \partial_{z} u \|_{m-1}\big),  \\
   & &  \| \nabla u \cdot \nabla u \|_{m-2} \lesssim  \| \nabla u \|_{L^\infty} \, \| \nabla u \|_{m-2},
   \end{eqnarray*}
   the proof of Proposition \ref{pressure} follows.

 \subsection{$L^\infty$ estimates}
 In this section, we shall provide the $L^\infty$ estimates which are needed  to estimate the right-hand sides
  in the estimates of  Propositions \ref{conorm+}, \ref{norm+}.
   Let us set 
   \beq
   \label{Nt}
   Q_{m}(t) = \|u(t) \|_{m}^2+ \|\eta(t) \|_{m-1}^2 + \|\eta \|_{1, \infty}^2
   \eeq 
 \begin{prop}
 \label{Linfty}
 For $m_{0}>1$, we have
 \begin{eqnarray}
 \label{un+}
& &  \|u \|_{W^{1, \infty}} \lesssim   \|u \|_{m_{0}+ 2 } +  \|\eta \|_{m_{0}+ 1 }+ \|\eta \|_{L^\infty} \leq
 Q_{m}^{1 \over 2}(t), \quad m \geq m_{0}+ 2 \\
\label{deux+}& &  \|u \|_{2, \infty} \lesssim \|u \|_{m_{0}+ 3 } + \|\eta \|_{m_{0}+ 2 } 
\leq Q_{m}^{1 \over 2}(t), \quad m\geq m_{0}+ 3, \\
 \label{deux++}& &  \| \nabla u \|_{1, \infty} \lesssim  \|u \|_{m_{0}+ 3} + \| \eta \|_{m_{0}+ 3} +  \| \eta \|_{1, \infty} \lesssim Q_{m}^{1 \over 2 }(t), \quad m \geq m_{0}+ 3 
  \end{eqnarray}
   
 \end{prop}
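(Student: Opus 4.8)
The plan is to prove the three $L^\infty$ estimates by combining an anisotropic Sobolev embedding adapted to conormal spaces with the algebraic relations already established between $\nabla u$, $\partial_z u_h$, $u_3$ and $\eta$. The key elementary tool is the following one-dimensional-in-$z$ trick: for a function $f$ on $\mathbb{R}^2\times(0,+\infty)$ one has, for $m_0>1$,
\beq
\nonumber
\|f\|_{L^\infty}^2 \lesssim \|f\|_{L^\infty_z L^2_y}\,\|f\|_{L^\infty_z H^{m_0}_y}
\lesssim \big(\|f\|_{L^2}+\|\partial_z f\|_{L^2}\big)\big(\|\nabla_h^{m_0}f\|_{L^2}+\|\partial_z\nabla_h^{m_0}f\|_{L^2}\big),
\eeq
using the fundamental theorem of calculus in $z$ and the two-dimensional Sobolev embedding $H^{m_0}(\mathbb{R}^2)\hookrightarrow L^\infty(\mathbb{R}^2)$; since $\nabla_h=Z_h$ these are conormal norms. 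More generally this gives $\|f\|_{L^\infty}\lesssim \|f\|_1+\|\partial_z f\|_{m_0}$ with the normal derivative appearing at most once inside, so what I really need to control on the right-hand side is, besides conormal norms of $f$ itself, conormal norms of $\partial_z f$.

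First I would prove \eqref{un+}. Apply the embedding above to $u$ and to $Zu$: this bounds $\|u\|_{W^{1,\infty}}$ (the genuine full gradient $\nabla u$, not just $Zu$, because the $W^{1,\infty}$ norm contains $\partial_z u$) by conormal norms of $u$, $\partial_z u$, and one more conormal derivative of each. The term $\partial_z u_3$ is harmless by the divergence-free identity \eqref{dzu3}, $\|\partial_z u_3\|_{k}\le\|u\|_{k+1}$, and $\partial_z u_h$ is controlled by \eqref{etau}, $\|\partial_z u_h\|_{k}\lesssim\|u\|_{k+1}+\|\eta\|_{k}$. Counting derivatives, the worst contribution needs $m_0+2$ conormal derivatives of $u$ and $m_0+1$ of $\eta$, plus an $\|\eta\|_{L^\infty}$ coming from the lowest-order piece where $\eta$ is not differentiated; each of these is bounded by $Q_m^{1/2}(t)$ as soon as $m\ge m_0+2$. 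Estimate \eqref{deux+} is the same argument applied to $u$, $Zu$ and $Z^2u$, costing one extra conormal derivative everywhere, hence $m\ge m_0+3$.

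The genuinely delicate point — and the main obstacle — is \eqref{deux++}, the control of $\|\nabla u\|_{1,\infty}$, equivalently of $\|(\partial_z u)_\tau\|_{1,\infty}=\|\partial_z u_h\|_{1,\infty}$ (the other components of $\nabla u$ being tangential derivatives or $\partial_z u_3$, handled as above by Sobolev embedding and the divergence-free condition). Here I cannot afford to differentiate $\partial_z u_h$ once more in $z$, since $\partial_{zz}u_h$ is of size $1/\sqrt\eps$ in the boundary layer \eqref{expIS}; so the naive embedding fails. Instead I would use $\partial_z u_h = (\text{l.o.t. from }u) + \eta^\perp + 2\alpha u_h$, reducing matters to $\|\eta\|_{1,\infty}$, which is exactly a term already present in $Q_m$, plus $\|Z\eta\|_{L^\infty}$ and $\|u\|_{1,\infty}$ which we just bounded. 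For $\|Z\eta\|_{L^\infty}$ one applies the refined embedding to $Z\eta$: $\|Z\eta\|_{L^\infty}\lesssim \|Z\eta\|_1+\|\partial_z Z\eta\|_{m_0}$, but now $\partial_z\eta$ does appear. The resolution is that $\eta$ vanishes on the boundary, so $\partial_z\eta$ is not anomalously large the way $\partial_{zz}u_h$ is — however, controlling $\|\partial_z\eta\|_{m_0+1}$ in terms of $Q_m$ still requires care, and in fact the clean statement of \eqref{deux++} keeps $\|\eta\|_{1,\infty}$ on the right-hand side precisely because $\|\partial_z\eta\|_{L^\infty}$ cannot be absorbed; one uses instead that $\|\eta\|_{1,\infty}$ already controls $\partial_z\eta$ in $L^\infty$ up to lower order, closing the estimate with $m\ge m_0+3$. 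So \eqref{deux++} is obtained by writing $\nabla u\in\{$tangential derivatives of $u$, $\partial_z u_3$, $\eta$, $u_h\}$ and bounding each class: the tangential ones and $\partial_z u_3$ by \eqref{deux+} and \eqref{dzu3}, and the $\eta,u_h$ pieces by $\|\eta\|_{1,\infty}$ plus lower-order conormal norms, all dominated by $Q_m^{1/2}(t)$.
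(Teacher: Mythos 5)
Your plan uses the same two ingredients as the paper's one--paragraph proof: the anisotropic Sobolev embedding \eqref{sob} together with the pointwise algebraic identities coming from $\nabla\cdot u=0$ and the definition of $\eta$ (namely $\partial_z u_3=-\nabla_h\cdot u_h$ and $\partial_z u_h=-\eta^\perp+2\alpha u_h+\nabla_h u_3$), and the derivative count you give matches the statement. One exposition caveat on \eqref{un+}: saying ``apply the embedding to $u$ and to $Zu$'' does not by itself bound $\|\partial_z u\|_{L^\infty}$; that piece has to go through the pointwise identities (which you do invoke to explain where the $\|\eta\|_{L^\infty}$ term comes from) rather than through the embedding applied to $\partial_z u$, since the latter would require a conormal norm of $\partial_{zz}u$, which is not controlled.

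The genuine problem is the remark in your discussion of \eqref{deux++}: the claim that ``$\eta$ vanishes on the boundary, so $\partial_z\eta$ is not anomalously large the way $\partial_{zz}u_h$ is'' is false, and it contradicts the paper explicitly. Since $\eta$ differs from $\omega_h\sim\partial_z u_h$ by zero-order terms, one has $\partial_z\eta\sim\partial_{zz}u$, which is precisely the quantity expected to be of size $1/\sqrt{\eps}$ in the layer \eqref{expIS}; the paper states right after this proposition that for this reason $\|\eta\|_{L^\infty}$ and $\|Z\eta\|_{L^\infty}$ \emph{cannot} be recovered from \eqref{sob}. That is the whole reason $\|\eta\|_{1,\infty}$ is carried as an independent component of $Q_m$ and is estimated by a completely different argument (maximum principle and the Fokker--Planck Green's function in Proposition \ref{etainfty} and Lemmas \ref{lemFP0}, \ref{FP}). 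Your side excursion trying to estimate $\|Z\eta\|_{L^\infty}$ by the embedding is therefore both unnecessary and impossible; you do land on the right conclusion --- leave $\|\eta\|_{1,\infty}$ on the right-hand side and note it is already a piece of $Q_m^{1/2}$ --- but the justification you gave for why this is admissible rests on a false statement about $\partial_z\eta$.
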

 
 From  this proposition and \eqref{dzconorm++}, we see that we shall  only need
   to estimate  $ \| \eta \|_{1, \infty}$ in order to conclude.

 \begin{proof}
  We easily get \eqref{un+}, \eqref{deux+} and \eqref{deux++} from  the anisotropic Sobolev embedding :
    \beq
    \label{sob}
     \|f \|_{L^\infty}^2 \lesssim 
     \big\| |f|_{H^{m_{0}}(\partial \Omega)}\big\|_{L^\infty_{z}}^2 \lesssim \|\partial_{z}f \|_{m_{0}} \, \|f\|_{m_{0}} + 
      \|f\|_{ m_{0}}^2,
      \eeq
      where we use the notation
     $$ \big\| |f|_{H^{m_{0}}(\partial \Omega)}\big\|_{L^\infty_{z}} = \sup_{z} |f(\cdot, z)|_{H^{m_{0}}(\mathbb{R}^2)}, $$
      the divergence free condition which provides
  $$ |\partial_{z} u_{3}(t,x)| \leq |\nabla_{h} u_{h}(t,x) |$$
   and the fact that by definition of $\eta$, we have 
   $$ |\partial_{z} u_{h}(t, x) | \lesssim |\nabla_{h} u_{3}(t,x)| + |u_{h }(t,x)| + |\eta (t,x) |.$$
 
 \end{proof}
   
  We shall next  estimate $\|\eta \|_{L^\infty} $ and $\| Z \eta \|_{L^\infty}.$
  Note that  we cannot estimate these two quantities by using  \eqref{sob}.
   Indeed, we do not expect  $\partial_{z} \eta \sim \partial_{zz}u$ to be uniformly bounded in conormal spaces in the boundary
    layer (recall that $u$  is expected to behave as$ \sqrt{\eps}\, U(z/\sqrt{\eps}, y)$   as shown in 
     \cite{Iftimie-Sueur}). Consequently, we  need to use more carefully the properties of the equation
      for $\eta$ to get these needed  $L^\infty$ estimates directly. This is the aim of the following proposition.

 \begin{prop}
 \label{etainfty}
We have,  for $m >6 $,  the  estimate:
$$    
     \|  \eta(t)  \|_{1, \infty }^2  \lesssim
       Q(0) 
                +(1+ t+ \eps^3 t^2)   \int_{0}^t  \big(    Q_{m}(s)^2  +  Q_{m}(s) \big)\, ds. $$   
 
 \end{prop}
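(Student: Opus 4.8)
The plan is to obtain the $L^\infty$ bounds on $\eta$ and $Z\eta$ by going back to the convection–diffusion equation \eqref{eqeta} satisfied by $\eta$ (and the equation obtained by applying a conormal field $Z_i$ to it), freezing coefficients near the boundary, and representing the solution through the explicit heat kernel of the resulting constant-coefficient convection–diffusion problem on the half space with homogeneous Dirichlet data. Recall that $\eta$ vanishes on $\partial\Omega$ by \eqref{etab}, and that $\eta$ solves
\beq
\label{plan-eta}
\partial_t \eta + u\cdot\nabla\eta - \eps\Delta\eta = \omega\cdot\nabla u_h + 2\alpha\nabla_h^\perp p.
\eeq
First I would rewrite the principal part as a one-dimensional heat operator in the normal variable plus lower-order terms: near the boundary the operator $\partial_t + u\cdot\nabla - \eps\Delta$ looks, after flattening, like $\partial_t + u_3\partial_z - \eps\partial_{zz}$ plus tangential terms that are already controlled in $L^\infty$ by the conormal norms via \eqref{sob}; since $u_3$ vanishes on the boundary like $\varphi(z)$, the drift $u_3\partial_z$ is itself a bounded-coefficient conormal operator and may be thrown into the source. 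Thus $\eta$ (resp. $Z_i\eta$) satisfies a scalar equation $\partial_t w - \eps\partial_{zz} w = S$, $w|_{z=0}=0$, where $S$ collects the convection, stretching, pressure and commutator terms.

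The second step is to estimate $S$ in $L^\infty$. The stretching term $\omega\cdot\nabla u_h$ and its conormal derivatives are bounded, using Lemma \ref{gag}, \eqref{etau} and the Sobolev embedding \eqref{sob}, by $Q_m$ (one derivative costs $\|u\|_{m_0+2}+\|\eta\|_{m_0+1}$ each, and $m>6$ leaves enough room). The pressure term is handled through the decomposition $p=p_1+p_2$ of Theorem \ref{theoStokes}: $\nabla p_1$ is the Euler-type pressure, estimated in conormal norms by Proposition \ref{pressure} and hence controlled by $Q_m$, while $\nabla p_2$ is the Navier part, which by \eqref{eqp2} carries the factor $\eps$ and the boundary value of $u_h$, producing the $\eps$ (and after the time integration in Duhamel, the $\eps^3 t^2$) weights appearing in the statement. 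The commutators $[Z_i,u\cdot\nabla]\eta$ and $[Z_i,\eps\Delta]\eta$ generate terms of the shape handled in the proof of Proposition \ref{norm+}; the only delicate one is again $Z^\beta u_3\,\partial_z\eta$, rewritten as $\tfrac1\varphi Z^\beta u_3\cdot\varphi\partial_z\eta$ and estimated through the Hardy inequality exactly as there, so that $\partial_z\eta$ never appears alone — this is essential since $\partial_z\eta\sim\partial_{zz}u$ is not uniformly bounded in the layer.

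The third step is the Duhamel representation: writing $w(t)=e^{\eps t\partial_{zz}}w(0)+\int_0^t e^{\eps(t-s)\partial_{zz}}S(s)\,ds$ with the Dirichlet heat semigroup on $(0,\infty)$, whose kernel is the difference of two Gaussians and hence has $L^1_z$ norm bounded by $1$ uniformly in $\eps$ and $t$, one gets $\|w(t)\|_{L^\infty}\lesssim \|w(0)\|_{L^\infty}+\int_0^t\|S(s)\|_{L^\infty}\,ds$. Combining with the $L^\infty$ bounds on $S$ from step two, squaring, and using Cauchy–Schwarz to pass from $\int\|S\|_{L^\infty}$ to $(1+t)\int\|S\|_{L^\infty}^2$ (and absorbing the $\eps$-weighted pieces into the $(1+t+\eps^3t^2)$ prefactor), yields
$$
\|\eta(t)\|_{1,\infty}^2 \lesssim Q(0) + (1+t+\eps^3 t^2)\int_0^t\big(Q_m(s)^2+Q_m(s)\big)\,ds,
$$
which is the assertion. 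The main obstacle I expect is the careful bookkeeping in step two: isolating the $\eps$-powers coming from the Navier part $p_2$ of the pressure and from the $\eps\Delta$ commutators so that they land precisely in the $\eps^3t^2$ factor, and making sure every occurrence of a bare normal derivative of $\eta$ is absorbed via the Hardy/$\varphi$-rewriting trick rather than estimated directly. The flattening of the domain and the frozen-coefficient approximation are, by contrast, routine given that this section treats only the half space.
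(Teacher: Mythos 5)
The overall skeleton matches the paper's — split $\eta$ near/far from the boundary, use a one--dimensional heat/Fokker--Planck representation for the near-boundary part, push the bad convection and commutator terms into a source handled by the $\varphi$--rewriting trick. But the decisive step, estimating $\|Z_3\eta\|_{L^\infty}$, fails as you have set it up. You propose to apply $Z_3$ to the reduced equation $\partial_t\eta-\eps\partial_{zz}\eta=S'$ and then use the Dirichlet heat kernel on the result. The commutator $[Z_3,\eps\partial_{zz}]\eta=-\eps\big(2\varphi'\partial_{zz}\eta+\varphi''\partial_z\eta\big)$ then lands in the source; since $\varphi'(0)\neq 0$, this is genuinely of the order $\eps\partial_{zz}\eta\sim\eps\partial_{zzz}u$, which is \emph{not} bounded in $L^\infty$ uniformly in $\eps$ (one only controls $\eps\int_0^T\|\nabla^2u\|_{m-1}^2\,ds$, not a pointwise-in-time sup). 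In Proposition \ref{norm+} the analogous commutator is harmless because the $L^2$ energy method supplies the dissipation $\eps\|\nabla\eta\|_{m-1}^2$ to absorb it after an integration by parts, but there is no analogue of that absorption in your Duhamel/$L^\infty$ framework. A second problem: once $u_3\partial_z\eta$ has been thrown to the source, applying $Z_3$ to it produces the term $u_3\,\partial_z Z_3\eta=(u_3/\varphi)\,Z_3^2\eta$, a $\|\eta\|_{2,\infty}$-type quantity with no weight $\varphi$ to spare; your $\varphi$--rewriting handles $(Z^\beta u_3)\partial_z\eta$ for $\beta\neq 0$, but not this one.

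The paper circumvents both obstructions by \emph{not} applying $Z_3$ to the equation. It keeps the linear part of the drift inside the solution operator — $\partial_t + z\,\partial_z u_3(t,y,0)\,\partial_z + u_h(t,y,0)\cdot\nabla_h - \eps\partial_{zz}$ — and proves (Lemma \ref{FP}), via the explicit Fokker--Planck kernel and the identity $z\partial_z k=(z-z')\partial_z k - z'\partial_{z'}k$, that this evolution operator preserves the norm $\|f\|_{L^\infty}+\|z\partial_z f\|_{L^\infty}$. That is what lets one take $z\partial_z$ \emph{after} Duhamel instead of before, so the commutator $\eps[Z_3,\partial_{zz}]$ never appears. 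Crucially, the remainder $R=(u_h(t,x)-u_h(t,y,0))\cdot\nabla_h\rho+(u_3(t,x)-z\partial_z u_3(t,y,0))\partial_z\rho$ then vanishes to \emph{second} order in $z$ by Taylor, so $ZR$ comes with an extra factor $\varphi$, exactly what the embedding \eqref{trick} ($\|\varphi Z^2\rho\|_{L^\infty}\lesssim\|\rho\|_{m_0+3}$) needs to close. Freezing the drift to the constant value $0$ as in your plan discards that extra $\varphi$ and leaves an unabsorbable $Z_3^2\eta$-type term, so the argument does not close without Lemma \ref{FP} and the second-order Taylor structure of $R$.
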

 
 \subsubsection*{Proof of Proposition \ref{etainfty}}
  
  The estimate of $ \|\eta \|_{L^\infty}$ is a  consequence of the maximum principle for 
    the transport-diffusion equation  \eqref{eqeta}. 
     Let us set 
 \beq
 \label{Fdef}
  F=  \omega \cdot \nabla u_{h}  + 2 \alpha \nabla_{h}^\perp p \eeq 
   so that \eqref{eqeta} reads 
  \beq
  \label{eqetaF}
  \partial_{t} \eta + u \cdot \nabla \eta = \eps \Delta\eta +F .
  \eeq
%   For every nonnegative Lipshitz function $t \mapsto K(t)$, we have
%    thanks to the boundary condition \eqref{etab}
%   \beq
%   \label{Keta}
%   {d \over dt }  { 1 \over 2} \int \big(\eta-K(t) \big)_{+}^2 
%     + \eps \int  \big| \nabla ( \eta - K(t) )_{+} |^2 
%      = \int \big( F - K'(t) \big) (\eta-K(t))_{+}.
%      \eeq
%      By the choice
%    $$ K(t) =  \|\eta_{0}\|_{L^\infty} +  \int_{0}^t \|F \|_{L^\infty},$$
%    we get that the right hand side of \eqref{eqetaF} is nonpositive. This yields
%    $$ \int \big(\eta(t,x)-K(t) \big)_{+}^2  \leq \int \big(\eta_{0}-K(t) \big)_{+}^2 \leq 0$$
%    again by the choice of $K$. Consequently, we have proven that 
 We obtain that 
   $$
  \|\eta(t) \|_{L^\infty} \leq \|\eta_{0} \|_{L^\infty} + \int_{0}^t \|F \|_{L^\infty} 
 $$
    and hence from the Cauchy-Schwarz inequality that
    \beq
    \label{eta0}
   \|\eta(t) \|_{L^\infty}^2 \leq \|\eta_{0} \|_{L^\infty}^2 + t  \int_{0}^t \|F \|_{L^\infty}^2 
    \eeq    
   Next, we want to get a similar estimate for $Z_{i} \eta$.  The main difficulty is   the estimate
    of $Z_{3} \eta $ since  the commutator of  this vector field  with
     the Laplacian involves two derivatives in the   normal variable.

      Let $\chi(z)$ be a smooth compactly supported function  which  takes the value  one in the vicinity of $0$
       and is supported in $[0, 1]$.
        We can write
       $$\eta = \chi \eta + (1 - \chi ) \eta := \eta^b + \eta^{int}$$ 
       where $\eta^{int}$ is supported away from the boundary and $\eta^b$ is compactly
        supported in $z$.
        
        Since $ 1 - \chi $ and   $\partial_{z} \chi$ vanish in the vicinity of the boundary, and that  our conormal $H^m$ norm is equivalent to the usual $H^s$ norm away from the
         boundary, we can write thanks to the usual  Sobolev embedding that
        $$ \| \eta^{int} \|_{1, \infty} \lesssim \|\kappa u \|_{H^{s_{0}}}, \quad s_{0}> 2+ {3 \over 2}$$
         for some  $\kappa$  supported away from the boundary and hence we get that
         \beq
         \label{etaint}
          \| \eta^{int}(t) \|_{1, \infty} \lesssim  \|u \|_{m} \lesssim Q_{m}(t)^{1 \over 2 }, \quad
           m \geq 4.
           \eeq
           
       Consequently,    it only  remains   to estimate   $\eta^b$. We first notice that
          $\eta^b$ solves the equation
       \beq
       \label{etabeq}
          \partial_{t} \eta^{b} + u  \cdot \nabla  \eta^{b} = \eps \Delta  \eta^{b} +  \chi F +
          \mathcal{C}^{b},\eeq
         in the half-space $z>0$  with homogeneous Dirichlet boundary condition,   
          where $\mathcal{C}^{b}$ is the commutator
          $$ \mathcal{C}^{b}= - 2 \eps \partial_{z} \chi\, \partial_{z} \eta - \eps  \partial_{zz} \chi \,\eta   +  u_{3} \partial_{z} \chi  \,   $$
          Note that again since $\partial_{z} \chi$ and $\partial_{zz} \chi$ are supported away
           from the boundary, we have from the usual Sobolev embedding that
      $$   \| \mathcal{C}^b \|_{1, \infty} \lesssim  \| \kappa u \|_{W^{3, \infty}} \lesssim \| \kappa u \|_{H^{s_{0}}}, 
          \quad s_{0} >3+ {3 \over 2}$$
          and hence that
          \beq
         \label{Cb+} 
           \| \mathcal{C}^b \|_{1, \infty} \lesssim  \| u \|_{m} \leq Q_{m}^{1 \over 2},  \quad m \geq  5.
           \eeq
     A crucial estimate towards the proof of Proposition \eqref{etainfty} is the following:      
        \begin{lem}
        \label{lemFP0}
        Consider  $\rho$  a smooth solution of 
        \beq
        \label{eqetaFP0} 
        \partial_{t} \rho + u \cdot \nabla \rho = \eps \partial_{zz} \rho + \mathcal{S}, \quad  z>0, \quad 
         \rho(t,y,0)= 0
         \eeq
          for  some smooth divergence free vector field $u$ such that 
          $u \cdot n= u_{3}$ vanishes on the boundary. Assume that 
         $ \rho$ and $\mathcal{S}$  are  compactly supported in $z$.  Then, we have the estimate:
         $$ \| \rho (t) \|_{1, \infty}
          \lesssim \| \rho_{0} \|_{1, \infty} + 
          \int_{0}^t \Big( \big(  \|u \|_{2, \infty} + \| \partial_{z} u \|_{1, \infty} \big) \big( \| \rho \|_{1, \infty}
           + \| \rho \|_{m_{0}+ 3} \big) + \| \mathcal{S} \|_{1, \infty} \Big)$$
            for $m_{0}>2$.

        \end{lem}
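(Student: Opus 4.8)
The plan is to control separately the four terms in $\|\rho\|_{1,\infty}=\|\rho\|_{L^\infty}+\sum_{i=1}^{3}\|Z_i\rho\|_{L^\infty}$. Equation \eqref{eqetaFP0} is a transport--diffusion equation with \emph{only} normal diffusion and homogeneous Dirichlet boundary condition, and since $u\cdot n=u_3$ vanishes on $\{z=0\}$ the field $u$ is tangent to $\partial\Omega$, so the maximum principle applies directly: it yields $\|\rho(t)\|_{L^\infty}\le\|\rho_0\|_{L^\infty}+\int_0^t\|\mathcal S(s)\|_{L^\infty}\,ds$. For the two tangential fields $Z_1=\partial_1$, $Z_2=\partial_2$ one commutes $\partial_i$ with \eqref{eqetaFP0}: because $[\partial_i,\partial_{zz}]=0$ and $Z_i\rho$ again vanishes on the boundary, the same maximum principle gives $\|Z_i\rho(t)\|_{L^\infty}\le\|Z_i\rho_0\|_{L^\infty}+\int_0^t\big(\|\partial_i\mathcal S\|_{L^\infty}+\|(\partial_iu)\cdot\nabla\rho\|_{L^\infty}\big)\,ds$. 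The only nontrivial source term is $(\partial_iu_3)\,\partial_z\rho=(\partial_iu_3/\varphi)\,Z_3\rho$: since $u_3$, hence $\partial_iu_3$, vanishes on $\{z=0\}$, the Hardy-type bound $\|\partial_iu_3/\varphi\|_{L^\infty}\lesssim\|u\|_{2,\infty}+\|\partial_zu\|_{1,\infty}$ holds, so $\|(\partial_iu)\cdot\nabla\rho\|_{L^\infty}\lesssim(\|u\|_{2,\infty}+\|\partial_zu\|_{1,\infty})\|\rho\|_{1,\infty}$. For these three pieces this already produces exactly the right-hand side of the lemma (with $\|\rho\|_{m_0+3}$ not even needed).

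The heart of the matter is the estimate of $w:=Z_3\rho=\varphi\,\partial_z\rho$, which vanishes on $\{z=0\}$. Applying $Z_3$ to \eqref{eqetaFP0} gives
$$\partial_t w+u\cdot\nabla w-\eps\partial_{zz}w=Z_3\mathcal S-[Z_3,u\cdot\nabla]\rho-\eps\varphi''\partial_z\rho-2\eps\varphi'\partial_{zz}\rho,$$
and the obstruction is the last term: it carries \emph{two} normal derivatives, so it cannot be absorbed by a maximum-principle argument, and moreover neither $\partial_z\rho$ nor $\partial_{zz}\rho$ is bounded uniformly in $\eps$ (boundary-layer scaling $\rho\sim\sqrt\eps$), which is exactly why the weight $\varphi$ and the Dirichlet condition enjoyed by $w$ are indispensable. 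I would first rewrite, using $\partial_z\rho=\varphi^{-1}w$ and $\partial_{zz}\rho=\varphi^{-1}\partial_z w-\varphi'\varphi^{-2}w$, so as to move the first-order part $\tfrac{2\eps\varphi'}{\varphi}\partial_z w$ of the bad term to the left: the equation for $w$ becomes a convection--diffusion equation with an additional singular drift $\tfrac{2\eps\varphi'}{\varphi}\partial_z\sim\tfrac{2\eps}{z}\partial_z$ toward the wall, whose right-hand side now only contains $Z_3\mathcal S$ (bounded by $\|\mathcal S\|_{1,\infty}$), the convection commutator $-[Z_3,u\cdot\nabla]\rho$ — which, as in the first step and as in the proof of Proposition~\ref{norm+} (writing $u_3\partial_z\rho=(u_3/\varphi)w$, $Z_3u_3=\varphi\,\partial_zu_3$ and $u\cdot[Z_3,\nabla]\rho=-\tfrac{u_3\varphi'}{\varphi}w$) is $\lesssim(\|u\|_{2,\infty}+\|\partial_zu\|_{1,\infty})\|\rho\|_{1,\infty}$ — and the genuinely singular but $\eps$-weighted terms $\eps(2(\varphi')^2\varphi^{-2}-\varphi''\varphi^{-1})w\sim\eps z^{-2}w$.

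I would then estimate $w$ by Duhamel against the fundamental solution of an approximate equation, as announced in the introduction. Using the characteristics of $u$ (which preserve $\{z=0\}$ since $u_3$ vanishes there) to remove the convection, and freezing the remaining coefficients at their leading behaviour near the wall, the model operator is the one-dimensional Dirichlet heat operator $\partial_t-\eps\partial_{zz}$ on $\{z>0\}$ — possibly dressed by the singular drift $\tfrac{2\eps\varphi'}{\varphi}\partial_z$ — whose fundamental solution $G_\eps(\tau,z,z')$ is explicit, vanishes linearly in $z$ and in $z'$ at the wall, and obeys Schur-type bounds $\sup_z\int_0^t\|G_\eps(t-s,z,\cdot)\|_{L^1}\,ds\lesssim t$, $\sup_z\int_0^t\|\partial_{z'}G_\eps(t-s,z,\cdot)\|_{L^1}\,ds\lesssim(t/\eps)^{1/2}$, of the kind already used in the proof of Theorem~\ref{theoStokes}. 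Writing $w$ through Duhamel against $G_\eps$, all the source terms are controlled: the error made in freezing the coefficients and the convective remainder contribute the factor $\|u\|_{2,\infty}+\|\partial_zu\|_{1,\infty}$ times $\|\rho\|_{1,\infty}$; the $\varphi^{-1},\varphi^{-2}$ singularities carried by the $\eps$-weighted terms are absorbed by integrating by parts in $z'$ against $G_\eps$ — the boundary terms at $z'=0$ vanishing because $G_\eps$, $\rho$ and $u_3$ do — together with the $\eps$ prefactor; and the only contributions that cannot be put in $\|\rho\|_{1,\infty}$ are bounded by $\|\rho\|_{m_0+3}$ via the anisotropic Sobolev embedding \eqref{sob} (this is where that norm enters). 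Collecting the four pieces gives the stated estimate.

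I expect the main obstacle to be precisely this last step: constructing $G_\eps$ and proving the $L^1$/Schur kernel bounds, and above all the bookkeeping by which the singular weights $\varphi^{-1},\varphi^{-2}$ and the $\eps$-weighted second normal derivative are tamed against the kernel. By contrast, the maximum-principle estimates for $\|\rho\|_{L^\infty}$ and the tangential derivatives, together with the commutator algebra, are routine.
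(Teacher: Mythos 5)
Your first step (the maximum principle for $\rho$ and for $\partial_1\rho,\partial_2\rho$, with the commutator term handled through the divergence-free condition and a Hardy bound) is exactly the paper's argument and is fine. The genuine gap is in the treatment of $Z_{3}\rho$, which you correctly identify as the heart of the matter but do not actually resolve. Two distinct problems arise in your plan. First, commuting $Z_{3}$ with $\eps\partial_{zz}$ and rewriting $\partial_z\rho,\partial_{zz}\rho$ in terms of $w=Z_3\rho$ produces, besides the singular drift, the zeroth-order term $\eps\big(2(\varphi')^2\varphi^{-2}-\varphi''\varphi^{-1}\big)w\sim \eps z^{-2}w$; since $w$ only vanishes linearly at $z=0$, this term behaves like $\eps z^{-1}\partial_z w(\cdot,0)$ near the wall and is not even in $L^\infty$, so it cannot be fed into a Duhamel/$L^\infty$ argument as a source, and the claim that it is ``absorbed by integrating by parts in $z'$ against $G_\eps$'' is precisely the step that needs a proof (note also that your stated Schur bound $\sup_z\int_0^t\|\partial_{z'}G_\eps\|_{L^1}\lesssim (t/\eps)^{1/2}$ loses a factor $\eps^{-1/2}$ that must be compensated, and you do not say how). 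Second, your model operator is wrong: straightening by the full three-dimensional characteristics of $u$ destroys the exact $\eps\partial_{zz}$ structure (the transformed diffusion has flow-dependent coefficients, and the resulting errors carry $\eps$ times two normal derivatives, which are $O(1)$ in the boundary layer and uncontrolled), while freezing down to the Dirichlet heat operator $\partial_t-\eps\partial_{zz}$ discards the $O(1)$ linear normal drift $z\,\partial_z u_3(t,y,0)\,\partial_z$; treating that drift (or $u_3\partial_z w$) as a source requires $\|Z_3^2\rho\|_{L^\infty}$ or second normal derivatives in $L^\infty$, neither of which is controlled, and there is no spare weight $\varphi$ to invoke the embedding trick.

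The paper's proof avoids both obstacles by never applying $Z_3$ to the equation. It replaces $u\cdot\nabla$ by its boundary Taylor linearization $u_h(t,y,0)\cdot\nabla_h+z\,\partial_z u_3(t,y,0)\,\partial_z$, keeping the linear normal drift \emph{inside} the operator: combined with $\eps\partial_{zz}$ and the horizontal characteristics (which commute exactly with $\partial_{zz}$ because the coefficients are $z$-independent), this gives a one-dimensional Fokker--Planck operator with an explicit kernel, for which Lemma \ref{FP} shows that the solution operator propagates the weighted norm $\|z\partial_z\cdot\|_{L^\infty}$ without any loss in $\eps$. The Taylor remainder $R$ goes to the source; its crucial feature is the extra factors $O(z)$ and $O(z^2)$ from Taylor's formula, which supply the $\varphi$-weights so that the terms $\varphi Z^\alpha\rho$, $|\alpha|=2$, can be bounded by $\|\rho\|_{m_0+3}$ via \eqref{trick} --- this is the only place that norm enters, whereas in your sketch its appearance is attributed to unspecified ``contributions''. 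In short, the missing idea is precisely the exactly solvable linearized operator retaining the $z\partial_z u_3(t,y,0)\partial_z$ drift together with the weighted-derivative estimate for its kernel; without it, the step you yourself flag as the main obstacle does not close.
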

        Let us  first explain how we can use the result of Lemma \ref{lemFP0} to conclude.  
        By  applying Lemma \ref{lemFP0} to \eqref{etabeq} with $\mathcal{S}= 
         \chi F + \mathcal{C}^b + \eps \Delta_{y } \eta^b$ (where $\Delta_{y}$ is the Laplacian
          acting only on the $y$ variable),  we immediately get that
       \begin{eqnarray}
       \label{etabpf1} & &  \| \eta^b (t) \|_{1, \infty}
          \lesssim \| \eta_{0} \|_{1, \infty}  \\
           & & \nonumber
         +   \int_{0}^t \Big( \big(  \|u \|_{2, \infty} + \| \nabla u \|_{1, \infty} \big) \big( \| \eta \|_{1, \infty}
           + \| \eta \|_{m_{0}+ 3} \big) + \| \mathcal{C}^{b} \|_{1, \infty} + \| F \|_{1, \infty} + \eps  \| \Delta_{y}
            \eta^b \|_{1, \infty} \Big)\end{eqnarray}
           Note that $ \|\mathcal{C}^b \|_{1, \infty}$ is well controlled thanks to \eqref{Cb+}
            and
             that thanks to Lemma \ref{Linfty}, we have
                          \beq
    \label{FFP}  \|  F \|_{1, \infty} \lesssim 
         \| \nabla_{h} p \|_{1, \infty} +  \| \omega \|_{1, \infty}  \|\nabla u \|_{1, \infty} 
          \lesssim   \| \nabla_{h} p \|_{1, \infty} + Q_{m}.\eeq
       From the anisotropic Sobolev embedding \eqref{sob}, we note that
       $$   \| \nabla_{h} p \|_{1, \infty}\lesssim \| \nabla p \|_{m-1}$$
       for $m-1 \geq m_{0}+ 2 \geq 5$.  Finally, we also notice  that thanks to a new use of  \eqref{sob}, we have that
      \begin{eqnarray*}
       \Big( \eps  \int_{0}^t \| \Delta_{y} \eta^b \|_{1, \infty} \Big)^2 
      &  \lesssim &  \eps^2 \Big( \int_{0}^t \| \nabla^2 u \|_{m-1}^{1 \over 2  } Q_{m}^{1 \over 4} \Big)^2 + \eps^2 t \int_{0}^t Q_{m} \\
     &  \lesssim  &  \eps^2 t  \Big(\int_{0}^t \| \nabla^2 u \|_{m-1}^2 \Big)^{1 \over 2}
        \Big(\int_{0}^t Q_{m} \Big)^{1 \over 2} + 
      \eps^2 t \int_{0}^t Q_{m} \\
       &  \lesssim  & 
           \eps \int_{0}^t \| \nabla^2  u \|^2_{m- 1} +  (\eps^2 t + \eps^3 t^2) \int_{0}^t Q_{m}
       \end{eqnarray*}
            for $m \geq m_{0}+ 4$.
          Consequently, we get from \eqref{etabpf1} and \eqref{etaint} that
      \begin{eqnarray*}
       \| \eta (t) \|_{1, \infty}^2 &  \lesssim&     \| \eta_{0} \|_{1, \infty}
        + Q_{m}(t) + \eps \int_{0}^t \| \nabla^2  u \|_{m-1}^2
         + t \int_{0}^t \big(Q_{m}(s)^2 + \|\nabla p(s) \|_{m-1}^2\big)\, ds\\
          & & \quad   + (1 + t + \eps^3 t^2 ) \int_{0}^t Q_{m}\, ds.
          \end{eqnarray*}
       Finally, we get  from this  last estimate
         and   \eqref{dzconorm++} that
       $$   \| \eta (t) \|_{1, \infty}^2 \lesssim Q(0) + (1+ t+ \eps^3 t^2)  \int_{0}^t\big( Q_{m}(s) + Q_m(s)^2\big) \,ds.$$
        This ends the proof of Proposition \ref{etainfty}.

It remains to prove Lemma \ref{lemFP0}
\subsubsection*{Proof  of Lemma \ref{lemFP0}}
         
          The estimate of $ \| \rho \|_{L^\infty}$ and  $\|\partial_{i} \rho\|_{L^\infty}= \|Z_{i} \rho\|_{L^\infty} $, $i=1, \, 2$  also  follow
           easily from the maximum principle.
          Indeed, we get that $\partial_{i} \rho$ solves the equation
          $$  \partial_{t}  \partial_{i} \rho + u  \cdot \nabla \partial_{ i }  \rho = \eps \partial_{zz}
           \partial_{ i } \rho +   \partial_{i} \mathcal{S} 
           - \partial_{i} u\cdot  \nabla \rho$$
           still with an homogeneous Dirichlet boundary condition. Consequently, by using again the
            maximum principle, we find
          \beq
          \label{hetab} \|\nabla_{h} \rho \|_{L^\infty} \leq  \|\eta_{0}\|_{1, \infty}
           + \int_{0}^t \Big(  \|  \mathcal{S} \|_{1, \infty} 
            +  \| \partial_{i} u\cdot \nabla \rho \|_{L^\infty}\Big).\eeq
            To estimate the last term in the above expression, we write again
            \beq
            \label{hetab1}   \| \partial_{i} u\cdot \nabla \rho \|_{L^\infty}
             \lesssim \| u \|_{{1, \infty} } \| \rho\|_{1, \infty}
              +  \| \partial_{z} \partial_{i}u_{3}\|_{L^\infty}  \| Z_{3} \rho \|_{L^\infty}
               \lesssim \|u \|_{2, \infty} \| \rho \|_{1, \infty}.\eeq
               by a new use of the fact that $u$ is divergence free.

      It remains to estimate $\|Z_{3} \rho \|_{L^\infty} $  which is the most difficult term. We cannot
       use the same  method as previously due to the bad commutator between $Z_{3}$ and
        the Laplacian. We shall use  a more precise description of the solution of \eqref{etabeq}.
           We shall first  rewrite the equation \eqref{etabeq} as
        $$  \partial_{t} \rho + z \partial_{z}u_{3}(t,y,0) \partial_{z} 
         \rho + u_{h}(t,y,0) \cdot \nabla_{h} \rho - \eps \partial_{zz} \rho  =
          \mathcal{S} - R:=G$$
          where 
          $$ R=  \big(u_{h}(t,x)- u_{h}(t,y,0)\big) \cdot \nabla_{h} \rho +\big( u_{3}(t,x) -  z \partial_{z}u_{3}(t,y,0)
          \big) \partial_{z} \rho.$$
          The idea will be to use an exact representation of the Green's function of the operator
           in the left-hand side to perform the estimate.

          Let   $S(t, \tau)$ be the $C^0$  evolution operator generated by the left hand side of the above equation.
          This means that  $f(t,y,z)= S(t, \tau) f_{0}(y,z)$  solves the equation
   $$
     \partial_{t} f +   z \partial_{z}u_{3}(t,y,0)  \partial_{z} f  
         + u_{h}(t,y,0) \cdot \nabla_{h} f - \eps \partial_{zz} f= 0, \quad z>0, \, t>\tau, \quad f(t, y, 0) = 0. 
$$
with the initial condition $f(\tau, y, z) = f_{0}(y,z)$.
Then we have the following estimate:
         \begin{lem}
         \label{FP}
         There exists $C>0$  such that
        $$ \big\|  z\partial_{z}  S(t, \tau) f_{0} \|_{L^\infty}
         \leq C \big( \|f_{0}\|_{L^\infty} + \|  z \partial_{z}  f_{0} \|_{L^\infty} \big), \quad \forall t \geq \tau \geq 0.$$
           \end{lem}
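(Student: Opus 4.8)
The plan is to construct the Green's function of the scalar operator
$\partial_t + z\,\partial_z u_3(t,y,0)\,\partial_z + u_h(t,y,0)\cdot\nabla_h - \eps\partial_{zz}$
on the half-line $z>0$ with homogeneous Dirichlet condition, and then read off the bound on $z\partial_z S(t,\tau)f_0$ directly from an explicit kernel. First I would freeze the transverse variable: since the coefficients $a(t,y):=\partial_z u_3(t,y,0)$ and $b(t,y):=u_h(t,y,0)$ depend on $y$ only through their values on the boundary, the $y$-transport term $b\cdot\nabla_h$ can be absorbed by following characteristics in $y$, i.e. by setting $\tilde f(t,y,z)=f(t,Y(t),z)$ where $\dot Y = b(t,Y)$. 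This reduces matters, for each fixed $y$, to the one-dimensional operator $\partial_t + a(t)\,z\,\partial_z - \eps\partial_{zz}$ on $z>0$ with Dirichlet condition at $z=0$. The remaining $y$-dependence is harmless since all bounds will be uniform in $y$ (recall $\|u\|_{2,\infty}$ etc.\ control $a$, $\nabla_y a$, $b$, $\nabla_y b$).

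Next I would handle the linear drift $a(t)\,z\,\partial_z$ by a change of variables in $z$. Writing $\lambda(t,\tau)=\exp\!\big(\int_\tau^t a(s)\,ds\big)$ and setting $z = \lambda(t,\tau)\,\zeta$, the transport term disappears and one is left with a pure heat equation in $\zeta$ on the half-line, but with a time-rescaled diffusion coefficient $\eps/\lambda(t,\tau)^2$. The Dirichlet heat kernel on $(0,\infty)$ is explicit: it is the odd reflection of the Gaussian, $H_\mu(\zeta,\zeta')=G_\mu(\zeta-\zeta')-G_\mu(\zeta+\zeta')$ with $G_\mu$ the centered Gaussian of variance $2\mu$. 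Therefore $S(t,\tau)f_0$ has an explicit integral representation against a kernel $K(t,\tau;z,z')$ obtained from $H_\mu$ by undoing the two changes of variables, with effective time $\mu(t,\tau)=\int_\tau^t \eps\,\lambda(s,\tau)^{-2}\lambda(t,\tau)^{2}\,ds$ (up to such harmless reshuffling). The key point is that $z\partial_z$ acting on this representation produces $\zeta\partial_\zeta$ acting on $H_\mu(\zeta,\zeta')$, and the crucial algebraic identity is $\zeta\partial_\zeta H_\mu(\zeta,\zeta') = -\zeta'\partial_{\zeta'}H_\mu(\zeta,\zeta') + (\text{terms from the reflection})$, or more simply one integrates by parts in $\zeta'$ to move the derivative onto $f_0$, picking up at most one factor of $\zeta'\partial_{\zeta'}f_0$ plus a term where $\zeta\partial_\zeta H_\mu$ is integrated against $f_0$ itself. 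One then checks that $\sup_{\zeta}\int_0^\infty |\zeta\partial_\zeta H_\mu(\zeta,\zeta')|\,d\zeta' \le C$ uniformly in $\mu>0$ — this is the scale-invariance of the Gaussian: $\zeta\partial_\zeta H_\mu$ is homogeneous of degree $-1$ jointly in $(\zeta,\zeta',\sqrt\mu)$ up to exponential decay, so its $L^1$ norm in $\zeta'$ is a pure number. Combining, $\|z\partial_z S(t,\tau)f_0\|_{L^\infty} \le C\big(\|f_0\|_{L^\infty}+\|z\partial_z f_0\|_{L^\infty}\big)$ with $C$ independent of $\eps$, $t$, $\tau$ and (after unfreezing) of $y$.

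The main obstacle I anticipate is making the uniformity in $\eps$ and in the elapsed time $t-\tau$ genuinely rigorous: the effective diffusion time $\mu$ ranges over $(0,\infty)$, and one must verify that the kernel bounds are uniform as $\mu\to 0$ (the singular limit where $H_\mu$ concentrates) as well as $\mu\to\infty$. The former is where the Dirichlet boundary term and the factor $z\partial_z$ could in principle conspire to blow up; the homogeneity argument above is exactly what saves it, but one must be careful that the change of variables $z=\lambda\zeta$ does not distort $z\partial_z$ (it does not: $z\partial_z = \zeta\partial_\zeta$ exactly) and that $\lambda(t,\tau)$ stays bounded above and below on the time interval under consideration — which it does, since $a(t)=\partial_z u_3(t,y,0)$ is controlled by the quantities $\|u\|_{2,\infty}$, $\|\partial_z u\|_{1,\infty}$ that are assumed bounded on $[0,T]$. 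A secondary technical point is justifying the integration by parts in $\zeta'$ given only the stated regularity/decay of $f_0$; the compact support of $f_0$ in $z$ (assumed in Lemma \ref{lemFP0}) makes this routine.
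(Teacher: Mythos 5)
Your overall route is the paper's: straighten the tangential transport by following the characteristics of $u_{h}(t,\cdot,0)$ in $y$, remove the drift $z\,\partial_{z}u_{3}(t,y,0)\,\partial_{z}$ by an exponential dilation in $z$, and read the estimate off an explicit Gaussian kernel, exploiting the exact invariance of $z\partial_{z}$ under dilations. The paper performs the odd reflection first and solves the resulting whole-line Fokker--Planck equation by Fourier (a Gaussian in $z-z'$ acting on $\tilde f_{0}(y,e^{-\Gamma(t)}z')$), which is the same computation as your half-line Dirichlet kernel $H_{\mu}$ after the change of variables.

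There is, however, one step that fails as you state it: the bound $\sup_{\zeta}\int_{0}^{\infty}|\zeta\partial_{\zeta}H_{\mu}(\zeta,\zeta')|\,d\zeta'\le C$ uniformly in $\mu$ is false. Indeed $\zeta\partial_{\zeta}G_{\mu}(\zeta-\zeta')=-\frac{\zeta(\zeta-\zeta')}{2\mu}G_{\mu}(\zeta-\zeta')$, whose $L^{1}$ norm in $\zeta'$ is of order $\zeta/\sqrt{\mu}$, and in the dangerous region $\zeta'\approx\zeta\gg\sqrt{\mu}$ the image Gaussian $G_{\mu}(\zeta+\zeta')$ is exponentially small, so the reflection provides no cancellation; your homogeneity argument only shows that this quantity is a function of $\zeta/\sqrt{\mu}$, not that it is bounded. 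What does work --- and is exactly what the paper does --- is the identity you wrote in the preceding clause: $\zeta\partial_{\zeta}H_{\mu}=-\zeta'\partial_{\zeta'}H_{\mu}+(\zeta-\zeta')\partial_{\zeta}G_{\mu}(\zeta-\zeta')-(\zeta+\zeta')\partial_{\zeta}G_{\mu}(\zeta+\zeta')$. The two remainder kernels are of the form $\frac{x^{2}}{2\mu}G_{\mu}(x)$ and have $L^{1}_{\zeta'}$ norms bounded by a universal constant, while the term $-\zeta'\partial_{\zeta'}H_{\mu}$ must be integrated by parts in $\zeta'$ (no boundary contribution, thanks to the factor $\zeta'$ at $\zeta'=0$ and the decay/compact support of $f_{0}$), which produces $\int_{0}^{\infty}H_{\mu}\,\big(f_{0}+\zeta'\partial_{\zeta'}f_{0}\big)\,d\zeta'$, controlled by $\|f_{0}\|_{L^{\infty}}+\|z\partial_{z}f_{0}\|_{L^{\infty}}$ since $H_{\mu}\ge 0$ and $\int_{0}^{\infty}H_{\mu}\,d\zeta'\le 1$. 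Once the argument is organized this way there is no leftover term pairing $\zeta\partial_{\zeta}H_{\mu}$ with $f_{0}$, so the false bound is never needed. A last remark: you do not need $\lambda(t,\tau)$ bounded above and below; since $z\partial_{z}=\zeta\partial_{\zeta}$ exactly and the kernel bounds are uniform in the effective time $\mu\in(0,\infty)$, the constant is universal and the estimate holds for all $t\ge\tau\ge0$ as stated (in the paper this scale invariance appears through the observation that $e^{-\Gamma(t)}z'\,(\partial_{z}\tilde f_{0})(y,e^{-\Gamma(t)}z')$ is just $(w\partial_{w}\tilde f_{0})$ evaluated at $w=e^{-\Gamma(t)}z'$).
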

           
           We shall  postpone the proof of the Lemma until the end of the section.
           
       By using Duhamel formula, we deduce that 
      \beq
      \label{Duh}
      \rho(t) = S(t, \tau) \rho_{0} + \int_{0}^t S(t, \tau) G(\tau) \, d\tau.
      \eeq
      Consequently, by using Lemma \ref{FP}, we obtain
      $$ \|  Z_{3} \rho \|_{L^\infty} \lesssim
        \Big( \|\rho_{0}\|_{L^\infty}
        + \|   z\partial_{z}  \rho_{0} \|_{L^\infty}
         + \int_{0}^t  \big( \|G \|_{L^\infty}
        + \|  z\partial_{z}  G \|_{L^\infty}\big) \Big).$$
        Since $\rho$ and $G$ are compactly supported, we obtain
      \beq
      \label{etab1} 
       \|  Z_{3} \rho \|_{L^\infty} \lesssim
        \Big( \|\rho_{0}\|_{1, \infty}
                + \int_{0}^t   \|G \|_{1, \infty} \Big).
                \eeq
        It remains to estimate the right hand side.
    First, let      us estimate the term involving $R$.
           Since $u_{3}(t,y,0)= 0$, we have
          $$ \| R \|_{L^\infty} \lesssim \|u_{h}\|_{L^\infty} \|\nabla_{h} \rho \|_{L^\infty} + \| \partial_{z} u_{3}\|_{L^\infty}
            \| Z_{3} \rho \|_{L^\infty} \lesssim  \|u\|_{1, \infty} \, \| \rho \|_{1, \infty}.$$
            Note that we have used again the divergence free condition to get the last estimate.
            Next, in a similar way, we  get
         $$ \| Z R\|_{L^\infty} \lesssim  \| u \|_{2, \infty}  \| \rho \|_{1, \infty}
          + \Big\|  \big(u_{h}(t,x)- u_{h}(t,y,0)\big) \cdot Z \nabla_{h} \rho \Big\|_{L^\infty}
           + \Big\|  \big( u_{3}(t,x) -  z \partial_{z}u_{3}(t,y,0)
          \big) Z \partial_{z} \rho\Big\|_{L^\infty}$$
          By using the Taylor formula and
            the fact that $\rho$ is compactly supported in  $z$,  this yields 
            $$ \| ZR \|_{L^\infty} \lesssim 
            \| u \|_{2, \infty}  \| \rho \|_{1, \infty} +
             \| \partial_{z}u_{h}\|_{L^\infty} \| \varphi(z)   Z \nabla_{h} \rho \|_{L^\infty}+
             \|\partial_{zz} u_{3}\|_{L^\infty} \| \varphi^2(z) Z \partial_{z} \rho \|_{L^\infty}. $$
             Consequently, by using the divergence free condition, we get
%                           \beq
%              \label{dzuhinfty}
%              \| \partial_{z} u_{h} \|_{k, \infty} \lesssim \| u \|_{k+ 1 , \infty} + \| \eta \|_{k, \infty},
%              \eeq we get
            $$ \| R \|_{L^\infty} \lesssim \big(  \|u\|_{2, \infty} + \| \partial_{z} u  \|_{1, \infty}\big) \big( \|  \rho\|_{1, \infty}
             + \| \varphi(z) \rho \|_{2, \infty}\big).$$
             The additional factor $\varphi$ in the last term is crucial to close our estimate.
              Indeed,  by  the Sobolev embedding \eqref{sob}, we  have  that  for $|\alpha |=2$
               $$ \| \varphi Z^\alpha \eta \|_{L^\infty} 
                \lesssim  \| Z^\alpha \eta \|_{m_{0}} + \| \partial_{z} \big ( \varphi Z^\alpha \eta\big)  \|_{m_{0}} $$
                 and hence we obtain by definition of $Z_{3}$ that
            \beq
            \label{trick}   \| \varphi Z^\alpha \eta \|_{L^\infty}  \lesssim  \|\eta \|_{m_{0} + 3 }, 
             \quad | \alpha | = 2.\eeq
             Consequently, we finally get by using Proposition \ref{Linfty} that for $m \geq m_{0}+ 4$
            \beq
            \label{Rest}
            \| R(t) \|_{1, \infty} \lesssim
            \big(  \|u\|_{2, \infty} + \| \partial_{z} u  \|_{1, \infty}\big) \big( \|  \rho\|_{1, \infty}
             + \|  \rho \|_{m_{0}+ 3}\big).
            \eeq 
          Finally,  the proof of      
      Proposition \ref{etainfty} follows from the last estimate and \eqref{etab1}.

     It remains to prove Lemma \ref{FP}.

    \subsubsection*{Proof of Lemma \ref{FP}}
    Let us set $f(t,y,z)= S(t, \tau) f_{0}(y,z)$, then $f$ solves the equation
   $$
     \partial_{t} f +   z \partial_{z}u_{3}(t,y,0)  \partial_{z} f  
         + u_{h}(t,y,0) \cdot \nabla_{h} f - \eps \partial_{zz} f= 0, \quad z>0,  \quad f(t, y, 0) = 0. 
$$
We can first transform the problem into a problem in the whole space.
 Let us  define $\tilde{f}$ by 
 \beq
 \label{tildef} \tilde{f}(t,y,z)= f(t,y,z), \, z>0, \quad \tilde{f}(t,y,z) = - f(t,y,-z), \, z<0\eeq
 then $\tilde{f}$ solves
 \beq
 \label{FP2}
   \partial_{t} \tilde{f} +   z \partial_{z}u_{3}(t,y,0)  \partial_{z} \tilde{f}  
         + u_{h}(t,y,0) \cdot \nabla_{h} \tilde{f} - \eps \partial_{zz} \tilde{f}= 0,  \quad  z \in \mathbb{R}  
 \eeq 
with the initial condition $\tilde{f}(\tau, y, z)= \tilde{f}_{0}(y,z)$.  

%Note that from the maximum principle, we immediately get that
%$$ \| f(t) \|_{L^\infty} \leq \|f_{0}\|_{L^\infty}.$$
%Since for $i=1, 2, $ we have
%$$    \partial_{t} \partial_{i}f +   z \partial_{z}u_{3}(t,y,0)  \partial_{z}\partial_{i} f  
%         + u_{h}(t,y,0) \cdot \nabla_{h}  \partial_{i}f - \eps \partial_{zz}\partial_{i} f=   - z \partial_{z}  \partial_{i}u_{3}(t,y,0)
%          \partial_{z} f - \partial_{i} u_{h}\cdot \nabla_{h} \partial_{i} f, \quad z>0, $$
%          still with the boundary condition  $ \partial_{i}f(t, y, 0) = 0$
%         , by using again the maximum principle, we find
%    \beq
%    \label{FPh}
%  \| \partial_{i} f(t) \|_{L^\infty} \lesssim  \| \partial_{i} f(\tau)\|_{L^\infty} + \int_{\tau}^t
%   \| u \|_{2, \infty} \, \|\big(\nabla_{h} f , z\partial_{z}f \big) \|_{L^\infty}.
%   \eeq
%   
%   
%  The estimate of $ z  \partial_{z} f$ is more complicated due to the bad commutator
%   with the $\partial_{zz}$ term in the equation. 
We shall get the estimate by using an exact
   representation of the solution.

To solve \eqref{FP2}, we can first define
\beq
\label{tildeg} g(t,y,z)= f(t, \Phi(t, \tau, y), z)\eeq

where $\Phi$ is the solution of 
$$
\partial_{t} \Phi = u_{h}(t,\Phi, 0), \quad \Phi(\tau, \tau, y)= y.
$$
Then, $g$ solves the equation
$$ \partial_{t}g +  z \gamma(t,y) \partial_{z}g - \eps \partial_{zz} g = 0, \quad z \in \mathbb{R}, \quad
 g(\tau, y, z)= \tilde{f}_{0}(y,z)$$
 where
 \beq
 \label{Gamma}
 \gamma(t,y)= \partial_{z} u_{3}(t, \Phi(t, \tau, y), 0)
 \eeq
 which is a one-dimensional  Fokker-Planck type equation (note that now $y$ is only a parameter
  in the problem). By a simple computation in Fourier space, we find the explicit representation
 \begin{eqnarray*} 
 g(t,x) & =  &   \int_{\mathbb{R}}
  {1 \over \sqrt{ 4 \pi \eps  \int_{\tau}^t  e^{2 \eps ( \Gamma(t) - \Gamma(s) ) }\, ds}} 
   \exp \Big(  - {  (z- z')^2 \over  4 \eps  \int_{\tau}^t  e^{2 \eps ( \Gamma(t) - \Gamma(s) ) }\, ds} \Big)
     \tilde{f}_{0}(y,  e^{- \Gamma(t) }z')\, dz' \\
     & = &  \int_{\mathbb{R}} k(t, \tau, y, z-z') \tilde{f}_{0}  (y,  e^{- \Gamma(t) }z')\, dz'
     \end{eqnarray*} 
     where $\Gamma(t)= \int_{\tau}^t \gamma(s,y)\, ds$ (note that $\Gamma$ depends
      on $y$ and $\tau$, we do not write down explicitely this dependence for notational convenience).
      
      Note that $k$ is non-negative and that $\int_{\mathbb{R}} k(t, \tau, y, z) \, dz= 1$, thus, 
      we immediately recover that
      $$ \|g \|_{L^\infty} \leq \|\tilde{f}_{0}\|_{L^\infty}.$$
      Next, we observe that   we  can write 
    $$ z \partial_{z}k (t,\tau, z-z')=\big( z - z' \big) \partial_{z} k  -  z'  \partial_{z'}k
     (t, \tau, z-z')$$
     with
     $$  \int_{\mathbb{R} } \big| \big( z - z' \big) \partial_{z} k \big| dz' \lesssim 1$$
    and thus by using an integration by parts, we find
    $$ \|z \partial_{z} g\|_{L^\infty} \lesssim 
     \| \tilde{f} \|_{L^\infty} + \Big\| e^{- \Gamma(t) } \int_{\mathbb{R}} k(t,\tau, y, z') z' \partial_{z} \tilde{f}_{0}(y, e^{- \Gamma(t)}
      z')  dz'  \Big\|_{L^\infty}.$$
      By using \eqref{Gamma}, this yields
    $$   \| z  \partial_{z} g\|_{L^\infty} \lesssim 
     \| \tilde{f}_{0} \|_{L^\infty} + \| z \partial_{z} \tilde{f}_{0} \|_{L^\infty}.$$
     By using \eqref{tildef} and \eqref{tildeg},  we obtain
     $$ \| z  \partial_{z} f \|_{L^\infty} \lesssim     \| z  \partial_{z} \tilde{f} \|_{L^\infty}
      \lesssim  \| \tilde{f}_{0} \|_{L^\infty} +   \| z \partial_{z} \tilde{f}_{0} \|_{L^\infty}
       \lesssim   \| f_{0} \|_{L^\infty} +   \| z \partial_{z} f_{0} \|_{L^\infty}  .$$
%      By plugging this last estimate in \eqref{FPh}, we find
%    $$ \| \nabla_{h} f (t) \|_{L^\infty} \lesssim  (1 + \int_{\tau}^t \| u \|_{2, \infty}) \big( \|f_{0} |_{L^\infty}
%     + \|(\nabla_{h}f_{0}, z\partial_{z} f_{0}) \|_{L^\infty}) + \int_{\tau}^t   \| u \|_{2, \infty}
%      \| \nabla_{h} f \|_{L^\infty}.$$
%From the Gronwall inequality, we finally get
%\begin{eqnarray*}  \| \nabla_{h} f (t) \|_{L^\infty}&  \lesssim&  (1 + \int_{\tau}^t \| u \|_{2, \infty}) \big( \|f_{0} \|_{L^\infty}
%     + \|(\nabla_{h}f_{0}, z\partial_{z} f_{0}) \|_{L^\infty}\big)  e^{C_{0} \int_{\tau}^t  \| u \|_{2, \infty}} \\
%     &  \lesssim&   e^{C \int_{\tau}^t  \| u \|_{2, \infty}}  \big( \|f_{0} \|_{L^\infty}
%     + \|(\nabla_{h}f_{0}, z\partial_{z} f_{0}) \|_{L^\infty})
%     \end{eqnarray*}
%     for some $C_{0},$ $C>0$.      
       This ends the proof of Lemma \ref{FP}. 
      
    \subsection{Final a priori estimate}
    By  combining Propositions \ref{etainfty}, \ref{Linfty} and  \eqref{dzconorm++}, the proof of Theorem \ref{apriori+}
     follows.

    %%%%%%%%%%%%%%%%%%%%%%%%%%%%%%%%
    %%%%%%%%%%%%%%%%%%%%%%%%%%%%%%%%
    %%%%%%%%%%%%%%%%%%%%%%%%%%%%%%%%
    %%%%%%%%%%%%%%%%%%%%%%%%%%%%%%%%
     
   \section{The case of a general domain with smooth boundary}
   
   \label{sectionb}
   
   \subsection{Notations and conormal spaces}
   We   recall that  $\Omega$ is a bounded domain of $  \mathbb{R}^3$ and we assume that
   there exists a covering  of $\Omega$ under the form
    \beq
    \label{covomega}
    \Omega \subset\Omega_{0} \cup_{i=1}^n \Omega_{i}
    \eeq where 
    $\overline{\Omega_{0}} \subset \Omega$ and in each $\Omega_{i}$, there exists a smooth function
     $\psi_{i}$ such that  $\Omega \cap \Omega_i= \{ (x= (x_{1}, x_{2}, x_{3}), \, x_{3}>\psi_{i}(x_{1}, x_{2}) \}  \cap \Omega_i $
      and $\partial \Omega \cap \Omega_{i}= \{ x_{3}= \psi_{i}(x_{1}, x_{2}) \}   \cap \Omega_i.$

    To define Sobolev  conormal spaces,  we consider $(Z_{k})_{ 1 \leq k \leq N}$ a finite set of generators 
     of vector fields that are  tangent to $\partial \Omega$ and  
     $$ H^m_{co}(\Omega)= \big\{ f \in L^2(\Omega), \quad  Z^I \in L^2 (\Omega), \quad | I | \leq m \big\}$$
      where for $I=(k_{1},  \cdots, k_{m})$, 
      We use the notation  
       $$ \|u \|_{m}^2 = \sum_{i=1}^3 \sum_{| I | \leq m } \|Z^I u_{i} \|_{L^2}^2$$
    and in the same way
      $$ \|u\|_{k, \infty}=  \sum_{|I| \leq m } \|Z^I u \|_{L^\infty}, $$
        $$ \| \nabla Z^m u  \|^2 = \sum  _{|I| \leq m } \| \nabla Z^I u \|_{L^2}^2.$$

        Note that, by   using our covering of $\Omega$, we can always  assume that  each   vector field is  
        supported in  one of the  $\Omega_{i}$, moreover,   in $\Omega_{0}$ the $\| \cdot  \|_{m} $ norm  yields   a control
         of   the standard $H^m $ norm, whereas  if $\Omega_{i}\cap \partial\Omega \neq \emptyset$, 
         there is no control of the normal derivatives. 
         
         In the proof $C_{k}$ will denote a number independent of $\eps \in (0, 1]$ which depends only on 
        the $\mathcal{C}^k$ regularity of  the boundary, that is to say on the $\mathcal{C}^k$ norm of the functions $\psi_{i}.$

          By using that $\partial \Omega$ is given locally by $x_{3}= \psi(x_{1}, x_{2})$ (we omit the subscript $i$ for notational
           convenience),  it is convenient to use  the coordinates:
          \beq
          \label{coord} \Psi:\,  (y,z)\mapsto (y, \psi(y)+z).\eeq
           A local basis is thus given by the vector fields
           $ (\partial_{y^1}, \partial_{y^2}, \partial_{z})$.
            On the  boundary $\partial_{y^1}$ and $\partial_{y^2}$ are tangent  to $\partial \Omega$, but
            $\partial_{z}$  is not a normal vector field. We shall sometimes use the notation $\partial_{y^3}$
             for $\partial_{z}$.
              By using this parametrization,   we can  take  as  suitable vector fields
                  compactly supported in $\Omega_{i}$ in the definition of the $\| \cdot \|_{m}$ norms:
          $$ Z_{i}=  \partial_{y^i}=  \partial_{i}+ \partial_{i}\psi\, \partial_{z}, \quad i=1, \, 2,  \quad
           Z_{3}=  \varphi(z)\big( \partial_{1} \psi \, \partial_{1}+ \partial_{2} \psi \, \partial_{2}-
           \partial_{z}\big)$$
           where  $\varphi$ is smooth, 
            supported  in $\mathbb{R}_{+}$,  and such that $\varphi(0)= 0$, $\varphi(s)>0, \, s>0$.

            In this section, we shall still denote  by $\partial_{i}$, $i=1,\, 2, \, 3$ or $\nabla$
             the derivation with respect to the standard coordinates of $\mathbb{R}^n$.
           The coordinates of a vector field $u$ in the basis $(\partial_{y^i})_{1 \leq i \leq 3}$  will be denoted by $u^i$, thus
            $$u= u^1 \partial_{y^1} + u^2 \partial_{y^2}+ u^3 \partial_{y^3}$$
            whereas we shall still denote by $u_{i}$ the coordinates in the canonical basis of $\mathbb{R}^3$, namely $ u = u_1 \partial_1 +u_2 \partial_2  + u_3 \partial_3$
            (we warn the reader that this convention does not match with the standard Einstein convention
             for raising and lowering the indices in differential geometry).
                       
           We shall also denote by ${ n}$ the  unit outward normal which is given locally by 
           $$ n(\Psi(y,z))= { 1 \over \big(1 +| \nabla \psi(y) |^2\big)^{1 \over 2} }
            \left(  \begin{array}{ll}  \partial_{1} \psi(y) \\
                   \partial_{2} \psi(y) \\ - 1  \end{array} \right)$$
                   (note that  $n$ 
                    is actually  naturally defined  in the whole  $\Omega_{i}$ and does not depend on $x_{3}$)
                   and in the same way, by  $\Pi$ the orthogonal projection
                   $$ \Pi(\Psi(y,z)) X=  X- X\cdot n(\Psi(y,z)) \, n(\Psi(y,z))$$
                   which  gives the orthogonal projection
                    onto the tangent space of the boundary. 
                    
                    By using these notations, the Navier boundary
                     condition \eqref{N} reads:
                 \beq
                 \label{N2} u \cdot n= 0, \quad  \Pi \partial_{n} u = \theta(u)  -  2 \alpha  \Pi u
                 \eeq
                     where $\theta$ is the shape operator (second fondamental form) of the boundary i.e  given by
                   $$ \theta (u)= \Pi  \big(u \cdot \nabla n\big).$$

       The crucial step in the proof of Theorem \ref{mainb} is again the proof of an a priori estimate.
       We shall prove that:
       
       \begin{theoreme}
   \label{apriorib}
   For $m>6$,  and $\Omega$ a $\mathcal{C}^{m+2}$ domain, 
 there exists $C_{m+2}>0$ independent of $\eps \in (0, 1]$ and $\alpha $, $| \alpha | \leq 1$ such that   for every sufficiently smooth solution defined on $[0, T]$ of \eqref{NS}, \eqref{N},  we have the a priori estimate
    $$ N_{m}(t) \leq C_{m+2}  \Big( N_{m}(0) +  (1+ t + \eps^3 t^2) \int_{0}^t \big( N_{m}(s) + N_{m}(s)^2 \big) ds\Big), \quad
     \forall t \in [0, T]$$
     where
     $$ N_{m}(t) = \|u(t) \|_{m}^2 + \| \nabla u (t) \|_{m- 1}^2 + \| \nabla u \|_{1, \infty}^2.$$ 
   
   \end{theoreme}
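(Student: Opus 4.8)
The plan is to reproduce, step by step, the four-step scheme used for the half-space in Theorem \ref{apriori+}, transporting each estimate to the curved geometry by working in the local coordinates \eqref{coord} and patching with the partition of unity subordinate to the covering \eqref{covomega}. All the resulting constants depend on the $\mathcal{C}^{m+2}$ norms of the defining functions $\psi_i$, which is exactly where the regularity hypothesis on $\Omega$ enters; I write $C_{m+2}$ for such a constant. \emph{Step 1, the conormal energy estimate.} Applying $Z^I$, $|I|\le m$, to \eqref{NS} and performing an $L^2$ estimate, one controls the boundary contribution of the viscous term via the Korn and trace inequalities exactly as in Remarks \ref{Korn} and \ref{bord}. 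The one genuinely new feature, compared with Proposition \ref{conorm+}, is that $Z^I u$ need no longer be tangent to $\partial\Omega$, so the integration by parts in $(\nabla Z^I p, Z^I u)$ leaves a boundary term $\int_{\partial\Omega} Z^I p\,(Z^I u\cdot n)$; since $u\cdot n = 0$ on $\partial\Omega$, the trace $Z^I u\cdot n$ is (up to harmless factors) the commutator $[Z^I, n\cdot]\,u$, which involves only \emph{tangential} derivatives of $u$ and of $n$, so this term is bounded by $|p|_{H^{m-1/2}(\partial\Omega)}\|u\|_m$ and hence, through the trace theorem, by $\|\nabla p\|_{m-1}\|u\|_m$ plus lower-order terms. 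The commutators $[Z^I,u\cdot\nabla]u$ and $[Z^I,\Delta]u$ are handled as in \eqref{C1} and \eqref{C3} via Lemma \ref{gag} and the fact that the coordinate $u^3$ of $u$ along $\partial_{y^3}=\partial_z$ vanishes on $\partial\Omega$ like $\varphi(z)$; the terms produced by the non-flat metric are smooth bounded coefficients, controlled by $C_{m+2}$. This yields the exact analogue of Proposition \ref{conorm+}.

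\emph{Step 2, the normal-derivative estimate.} Set $\omega = \mathrm{curl}\,u$; guided by \eqref{N2}, I would introduce a tangential corrected vorticity $\eta$, built from $\Pi(\omega\times n)$ and the lower-order part $2\alpha\Pi u - \theta(u)$ of the Navier condition, so that $\eta$ vanishes on $\partial\Omega$ and $\|\Pi\partial_n u\|_{m-1} \lesssim \|u\|_m + \|\eta\|_{m-1}$, the normal component $\partial_n u\cdot n$ being controlled by $\|u\|_m$ through $\nabla\cdot u = 0$. The equation solved by $\eta$ is a transport--diffusion equation of the type \eqref{eqeta}, now carrying additional zeroth- and first-order terms from the curvature of $\partial\Omega$ and from commutators of $\mathrm{curl}$ with the change of variables; the conormal $L^2$ estimate proceeds as in Proposition \ref{norm+}, the worst term $Z^\beta u^3\, Z^\gamma\partial_z\eta$ again being tamed by the Hardy-inequality trick ($u^3/\varphi\in L^\infty$). \emph{Step 3, the pressure.} Taking the divergence of \eqref{NS} gives $\Delta p = -\nabla\cdot(u\cdot\nabla u)$ with a Neumann condition obtained from the normal component of the momentum equation and \eqref{N2}; as in \eqref{p1+}--\eqref{p2+} I split $p = p^E + p^{Na}$, where $p^E$ solves the Neumann problem with data from $F = -u\cdot\nabla u$ (the usual Euler pressure, estimated by $\|\nabla p^E\|_{m-1} \lesssim \|F\|_{m-1} + \|\nabla\cdot F\|_{m-2}$ by conormal elliptic regularity on the $\mathcal{C}^{m+2}$ domain, cf.~\cite{Temam}), and $p^{Na}$ is harmonic with Neumann data of size $\eps|\alpha|$ times a first-order tangential derivative of $u$ on $\partial\Omega$, estimated by $\eps\|\nabla u\|_{m-1}^{1/2}\|u\|_{m-1}^{1/2}$ via the trace theorem. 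This is the curved analogue of Theorem \ref{theoStokes} and Proposition \ref{pressure}; combined with Steps 1 and 2 it gives, as in \eqref{dzconorm++}, a closed estimate for $\|u\|_m^2 + \|\partial_n u\|_{m-1}^2$ modulo the $L^\infty$ norms $\|u\|_{2,\infty} + \|\partial_n u\|_{1,\infty}$.

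\emph{Step 4, the $L^\infty$ estimates.} The anisotropic embedding \eqref{sob}, written in the $\psi_i$ charts, reduces everything but $\|\eta\|_{1,\infty}$ to conormal norms already controlled, as in Proposition \ref{Linfty}. For $\|\eta\|_{1,\infty}$ I localize in a chart, split $\eta = \eta^b + \eta^{int}$ by a cutoff near the boundary, dispose of $\eta^{int}$ by interior elliptic regularity, and for $\eta^b$ freeze the coefficients of its transport--diffusion equation at the boundary to obtain the one-dimensional Fokker--Planck operator $\partial_t + z\,\partial_z u^3(t,\cdot,0)\,\partial_z + u_h(t,\cdot,0)\cdot\nabla_h - \eps\partial_{zz}$; Lemma \ref{FP} then applies verbatim, the curvature terms being of lower order and absorbed into the source as in \eqref{Rest}--\eqref{trick}. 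This gives the analogue of Proposition \ref{etainfty}, and collecting Steps 1--4 as in the half-space case produces the stated estimate for $N_m(t)$.

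\emph{The main obstacle} is the bookkeeping of the many extra terms generated by the non-flat metric, the variation of $n$, and the shape operator $\theta$: one must verify that each of them is genuinely of lower order --- one fewer conormal, or one fewer normal, derivative --- so that it can be absorbed, and that the conormal elliptic (Stokes) regularity used for the pressure really holds at the claimed level with constants depending only on the $\mathcal{C}^{m+2}$ norm of $\partial\Omega$. Everything else is a routine, if lengthy, transcription of Section \ref{sectionhalf}.
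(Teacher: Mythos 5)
Your overall architecture (conormal energy estimate, corrected normal-derivative quantity, pressure splitting, $L^\infty$ bound via the Fokker--Planck representation) matches the paper, but the two places where the curved geometry is genuinely delicate are exactly where your plan deviates, and there it has a real gap. In Step 2 you build $\eta$ from the \emph{vorticity}, $\Pi(\omega\times n)$ corrected by $\theta(u)$ and $\alpha\Pi u$, and assert that all curvature terms are of lower order. This is the route the paper deliberately rejects for the $H^{m-1}_{co}$ estimate: the evolution of the correction $\theta(u)=\Pi(u\cdot\nabla n)$ under $\partial_t+u\cdot\nabla-\eps\Delta$ produces terms such as $\eps\,u_k\,\Delta\partial_k n$, i.e. three derivatives of $n$ (four of $\psi$); after applying $Z^{m-1}$ one needs $m+3$ derivatives of $\psi$, one more than the assumed $\mathcal{C}^{m+2}$ regularity, and the factor $\eps$ does not help with spatial regularity. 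The paper's way out is to define $\eta=\chi\Pi(Su\,n)+2\alpha\chi\Pi u$ as in \eqref{etab1def}, whose equation only requires $\Delta n$, $\Delta\Pi$ (hence exactly $C_{m+2}$), at the price that the Hessian $\nabla^2p$ enters the source; this in turn forces the stronger pressure estimate $\|\nabla^2 p_1\|_{m-1}\lesssim(1+\|u\|_{W^{1,\infty}})(\|u\|_m+\|\nabla u\|_{m-1})$ and the use of the $\eps$-smallness $\|\nabla p_2\|_{m-1}\lesssim\eps(\|u\|_m+\|\nabla u\|_{m-1})$ to absorb $\eps^{-1}\|\nabla p_2\|_{m-1}^2$ (Propositions \ref{conormb}, \ref{normb}, \ref{pressureb}). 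Your Step 3 only provides first-order pressure bounds, so if you repaired Step 2 by switching to the paper's $\eta$ you would still be missing the Hessian estimate; as written, your scheme either exceeds the allowed boundary regularity or lacks the pressure control it needs. (Your Step 1 duality argument for $\int_{\partial\Omega}Z^Ip\,(Z^Iu\cdot n)$ is a genuinely different, and plausible, way to avoid $\|\nabla^2p_1\|_{m-1}$ in the \emph{conormal} estimate — note, though, that the trace of $Z^Iu\cdot n$ costs $\|\nabla u\|_{m-1}$, not just $\|u\|_m$ — but it does not remove the Hessian from Step 2, which is where it is really needed.)

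In Step 4, ``freezing the coefficients at the boundary'' is not sufficient in a general chart $(y,\psi(y)+z)$: there the Laplacian contains cross terms $a(y,z)\,\partial_z\nabla_y$ with $O(1)$ coefficients, and the corresponding error $\eps\,\partial_z\nabla_y\eta$ (or $\eps\,z\,\partial_{zz}\eta$ from freezing the $\partial_{zz}$ coefficient) is not controlled by $N_m$, since two derivatives of $u$ with at least one normal derivative are of size $\eps^{-1/2}$ in the boundary layer. The paper's device is the normal geodesic parametrization, in which the metric is block diagonal \eqref{gform}, so the Laplacian \eqref{laplacian} has \emph{no} $\partial_z\nabla_y$ term, and the remaining first-order term $\tfrac12\partial_z(\ln|g|)\partial_z$ is eliminated exactly by the conjugation $\tilde\eta=|g|^{-1/4}\eta$ before Lemma \ref{lemFP0} is applied; only the purely tangential $\eps\Delta_{\tilde g}$ is put in the source. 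You need to say this (or an equivalent exact reduction), not an approximate coefficient-freezing, for Lemma \ref{FP} to apply.
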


      The steps of the  proof of Theorem \ref{apriorib}  are  the same  as  in the proof of Theorem \ref{apriori+}.
      Nevertheless some new difficulties will appear mainly due to  the fact that $n$ is not a constant vector 
      field any more.     
      \subsection{Conormal energy estimates}
      
      \begin{prop}
 
 \label{conormb}
 For every $m$,   the solution of \eqref{NS}, \eqref{N} satisfies the estimate
 \begin{eqnarray*}
& &   \|u(t)\|_{m}^2  + \eps \int_{0}^t \| \nabla  u \|_{m}^2
     \\
     & & \leq C_{m+2} \Big( \|u_{0}\|_{m}^2 + \int_{0}^t
      \Big( 
   \|\nabla^2 p_{1} \|_{m-1} \, \|u \|_{m}+  \eps^{-1} \|\nabla p_{2}\|_{m-1}^2    + 
   \big( 1 +   \|u \|_{W^{1, \infty}} \big)\big( \|u\|_{m}^2 + \| \nabla u \|_{m-1}^2 \big) \Big)
  \end{eqnarray*}
 where the pressure $p$ is splitted as $p= p_{1} + p_{2}$ where $p_{1}$ is the "Euler" part of
  the pressure which solves
  $$ \Delta p_{1}=  - \nabla \cdot (u \cdot \nabla u), \quad x \in \Omega, \quad \partial_{n}p=
   - \big(u \cdot \nabla u \big) \cdot n, \quad x \in \partial \Omega$$
    and $p_{2}$ is the "Navier Stokes part"  which solves
   $$ \Delta p_{2}= 0, \quad x \in \Omega, \quad \partial_{n}p_{2}= \eps  \Delta u \cdot n, \quad x \in \partial \Omega.$$ 
 \end{prop}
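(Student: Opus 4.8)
The plan is to mimic the proof of Proposition \ref{conorm+}, performing a conormal energy estimate by induction on $m$ while carefully keeping track of the terms created by the non-flatness of $\partial\Omega$: the coefficients of the fields $Z_{i}$ and of the normal $n$ are now non-constant functions of the $\psi_{i}$, which is what produces the dependence of the constant on the $\mathcal{C}^{m+2}$ norm of the boundary. Thanks to the covering \eqref{covomega} it suffices to estimate $Z^{I}u$ for one conormal field with $|I|=m$, supported in a single chart and written in the coordinates \eqref{coord}; the case $m=0$ is Proposition \ref{energie} together with Remark \ref{bord}. Applying $Z^{I}$ to \eqref{NS} gives
\begin{equation*}
\partial_{t}Z^{I}u+u\cdot\nabla Z^{I}u+\nabla Z^{I}p=\eps\Delta Z^{I}u+\mathcal{C}_{1}+\mathcal{C}_{2}+\mathcal{C}_{3},\qquad \nabla\cdot Z^{I}u=\mathcal{C}_{d},
\end{equation*}
with $\mathcal{C}_{1}=-[Z^{I},u\cdot\nabla]u$, $\mathcal{C}_{2}=-[Z^{I},\nabla]p$, $\mathcal{C}_{3}=\eps[Z^{I},\Delta]u$ and $\mathcal{C}_{d}=-[Z^{I},\nabla\cdot]u$, and one runs the $L^{2}$ estimate for $Z^{I}u$. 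As in Proposition \ref{energie}, the convection term drops since $\nabla\cdot u=0$ and $u\cdot n=0$, while the viscous term produces $-2\eps\|SZ^{I}u\|^{2}$ plus a boundary integral; invoking the Navier condition \eqref{N2} — which now contributes the curvature term $\theta(u)$ — the trace theorem, Korn's inequality (Remark \ref{Korn}) and the Young inequality \eqref{young}, this boundary integral is absorbed into the left-hand side up to the conormal commutator $\mathcal{C}_{b}$ on $\partial\Omega$, which is controlled by $|u|_{H^{m-1}(\partial\Omega)}$ and hence, by the trace theorem, by $\eps\|\nabla u\|_{m-1}\|u\|_{m}$, exactly as in the half-space case.

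The genuinely new point is the pressure term. After integration by parts $(\nabla Z^{I}p,Z^{I}u)=-(Z^{I}p,\mathcal{C}_{d})+\int_{\partial\Omega}Z^{I}p\,(Z^{I}u\cdot n)\,d\sigma$ and, contrary to the flat situation, the boundary integral does not vanish. Since $u\cdot n$ vanishes identically on $\partial\Omega$ and the $Z_{k}$ are tangent to $\partial\Omega$, one has $Z^{I}(u\cdot n)=0$ there, hence $(Z^{I}u\cdot n)_{/\partial\Omega}=-\big([Z^{I},n\cdot]u\big)_{/\partial\Omega}$, a conormal commutator of order $m-1$ in $u$, which by the trace theorem is bounded in $L^{2}(\partial\Omega)$ in terms of $\|u\|_{m}$ and $\|\nabla u\|_{m-1}$. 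To estimate the remaining factor we split $p=p_{1}+p_{2}$ as in the statement: for the Euler part, since the $Z_{i}$ are genuine first-order operators one has $\|\nabla p_{1}\|_{m}\lesssim\|\nabla^{2}p_{1}\|_{m-1}+\|\nabla p_{1}\|_{m-1}$, so the trace theorem together with Young's inequality (absorbing a small multiple of the dissipation $\eps\|\nabla Z^{I}u\|^{2}$) yields the contribution $\|\nabla^{2}p_{1}\|_{m-1}\|u\|_{m}$ plus terms of the admissible type $(1+\|u\|_{W^{1,\infty}})(\|u\|_{m}^{2}+\|\nabla u\|_{m-1}^{2})$, where one also uses the elementary bound $\|\nabla p_{1}\|\lesssim\|u\|_{W^{1,\infty}}\|u\|$; for the Navier--Stokes part, whose Neumann data $\eps\Delta u\cdot n$ carries a factor $\eps$, the analogous argument leaves precisely $\eps^{-1}\|\nabla p_{2}\|_{m-1}^{2}$. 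The interior term $(Z^{I}p,\mathcal{C}_{d})$ is harmless: $\|\mathcal{C}_{d}\|\lesssim\|u\|_{m}$ as in \eqref{Cd}, and $\|\nabla p_{1}\|_{m-1}\|u\|_{m}$ and $\|\nabla p_{2}\|_{m-1}\|u\|_{m}$ are disposed of exactly as above.

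Finally the commutators $\mathcal{C}_{1}$ and $\mathcal{C}_{3}$ are estimated word for word as in Proposition \ref{conorm+}: one expands $\mathcal{C}_{1}=\sum_{\beta+\gamma=I,\ \beta\neq0}c_{\beta,\gamma}Z^{\beta}u\cdot Z^{\gamma}\nabla u+u\cdot[Z^{I},\nabla]u$, controls the last term by the Hardy-type bound $|u\cdot n(x)|\le\varphi(z)\|u\|_{W^{1,\infty}}$ (valid because $u\cdot n$ vanishes on $\partial\Omega$) and the remaining ones by the Gagliardo--Nirenberg--Sobolev inequality of Lemma \ref{gag}, obtaining $\|\mathcal{C}_{1}\|\lesssim\|\nabla u\|_{L^{\infty}}(\|u\|_{m}+\|\nabla u\|_{m-1})$; and one treats $(\mathcal{C}_{3},Z^{I}u)$ by repeated integrations by parts using $[Z_{3},\Delta]u=-2\varphi'\partial_{zz}u-\varphi''\partial_{z}u$ plus curvature corrections, all the resulting boundary terms being controlled through \eqref{N2} and the trace theorem, the net outcome being $|(\mathcal{C}_{3},Z^{I}u)|\lesssim\eps\|\nabla u\|_{m}\big(\|\nabla u\|_{m-1}+\|u\|_{m}\big)+\|u\|_{m}^{2}+\|\nabla u\|_{m-1}^{2}$, with the $\eps$-terms absorbed into the dissipation by Young. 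Collecting all the contributions, moving the dissipative terms to the left and integrating in time gives the stated inequality. The main obstacle is the pressure boundary term of the second step: it is precisely what forces the splitting $p=p_{1}+p_{2}$ and the appearance of the second derivatives of $p_{1}$, and it is ultimately the reason why, later on, $\partial_{n}u$ must be controlled with $m-1$ rather than $m-2$ conormal derivatives.
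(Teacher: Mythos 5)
Your overall plan — induction on $m$, conormal energy estimate, use of the Navier condition in the form \eqref{N2} together with Korn's inequality, splitting the pressure into $p_1+p_2$ — matches the paper's strategy, and the commutator estimates for $\mathcal{C}_1$ and $\mathcal{C}_d$ are indeed treated exactly as in Proposition \ref{conorm+}. But two parts of your argument are not on solid ground.

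First, the way you set up the viscous term is internally inconsistent. You write the commuted equation with $\eps\,\Delta Z^{I}u$ (so that the viscous commutator $\mathcal{C}_{3}=\eps[Z^{I},\Delta]u$ must be carried separately), and then assert that integration by parts produces $-2\eps\|S(Z^{I}u)\|^{2}$. These two statements are incompatible: $Z^{I}u$ is not divergence-free (its divergence is $\mathcal{C}_{d}$), so $\Delta Z^{I}u=2\,\nabla\cdot S(Z^{I}u)-\nabla(\nabla\cdot Z^{I}u)$ and the strain form produces the extra term $\nabla\mathcal{C}_{d}$ which you do not account for. The paper sidesteps this by never commuting $Z^{I}$ through $\Delta$: it keeps $\eps Z^{I}\Delta u=2\eps Z^{I}\nabla\cdot Su$, splits off $2\eps\,\nabla\cdot(Z^{I}Su)$ plus the commutator $\eps[Z^{I},\nabla\cdot]Su$, and only then integrates by parts. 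This is what makes the Korn step and the identification of the boundary term $\int_{\partial\Omega}\big((Z^{I}Su)\cdot n\big)\cdot Z^{I}u$ (to which the Navier condition applies cleanly) legitimate. You should either follow that route, or abandon the strain form entirely and integrate $\eps\Delta Z^{I}u$ by parts directly (yielding $-\eps\|\nabla Z^{I}u\|^{2}$, no Korn needed, at the price of a boundary term in $\partial_{n}Z^{I}u$ that must then be decomposed using \eqref{N2}, \eqref{divcoord} and the commutator $[\partial_{n},Z^{I}]$); mixing the two is not allowed.

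Second — and this is the more serious gap — your treatment of the $p_{2}$ boundary term does not close. You correctly observe that $(Z^{I}u\cdot n)_{/\partial\Omega}=-\big([Z^{I},n\cdot]u\big)_{/\partial\Omega}$ is of conormal order $m-1$ in $u$, but then you only extract the $L^{2}(\partial\Omega)$ bound on it and pair it with $Z^{I}p_{2}$. Estimating $|Z^{I}p_{2}|_{L^{2}(\partial\Omega)}$ by the trace theorem then forces $\|\nabla p_{2}\|_{m}$ (or $\|\nabla^{2}p_{2}\|_{m-1}$) into the estimate, neither of which is controlled — only $\|\nabla p_{2}\|_{m-1}$ enters the stated inequality. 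The missing ingredient is the stronger statement \eqref{Piun3}: because the commutator is of order $m-1$, one actually has $|Z^{I}u\cdot n|_{H^{1}(\partial\Omega)}\leq C_{m+2}|u|_{H^{m}(\partial\Omega)}$, and this extra tangential derivative is what lets one integrate by parts once along $\partial\Omega$, moving one $Z$ off $p_{2}$:
\[
\Big|\int_{\partial\Omega}Z^{I}p_{2}\,(Z^{I}u\cdot n)\Big|\le C\,|Z^{\tilde I}p_{2}|_{L^{2}(\partial\Omega)}\,|Z^{I}u\cdot n|_{H^{1}(\partial\Omega)},\qquad|\tilde I|=m-1,
\]
after which the trace theorem only requires $\|\nabla p_{2}\|_{m-1}$ and Young's inequality against the dissipation produces the $\eps^{-1}\|\nabla p_{2}\|_{m-1}^{2}$ term. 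Without this step, asserting that "the analogous argument leaves precisely $\eps^{-1}\|\nabla p_{2}\|_{m-1}^{2}$" is unjustified. (Your integration by parts for $p_{1}$, by contrast, is unnecessary: the paper simply bounds $\|Z^{I}\nabla p_{1}\|$ directly by $\|\nabla^{2}p_{1}\|_{m-1}$ plus lower-order terms, which avoids creating a $p_{1}$ boundary term at all.)
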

 Note that   the estimate involving the pressure is worse than in Proposition
  \ref{conorm+}.    Indeed, since    $Z^\alpha u \cdot n$ does not vanish on the boundary, we cannot
   gain one derivative in the estimate of the Euler part of the pressure  by using an integration by parts.

 \subsection{Proof of Proposition \ref{conormb}}
 The estimate for $m=0$ is already given in Proposition \ref{energie}.
  Assuming  that it is proven for  $k \leq m-1$, we shall prove it for $k=m \geq 1$.
  By applying $Z^I$ for $| I |= m $  to \eqref{NS} as before, we obtain that
  \beq
  \label{NSbalpha} \partial_{t}Z^I u  + u \cdot \nabla Z^I u + Z^I \nabla p = \eps Z^I \Delta u
   + \mathcal{C}^1\eeq
   where $\mathcal{C}^1$ is  the commutator defined as
   $$ \mathcal{C}^1= \big[ Z^I, u \cdot \nabla ].$$
    By using again Lemma \ref{gag}, we obtain that
    \beq
    \label{C1b}
      \| \mathcal{C}^1 \| \leq C_{m+1}  \|  u \|_{W^{1, \infty}} \big( \|u \|_{m} + \| \partial_{z}u \|_{m-1} \big).
     \eeq
    Indeed, we can perform this estimate in each coordinate patch.  In $\Omega_{0}$, this is
    a direct consequence of  the standard tame Gagliardo-Nirenberg-Sobolev inequality.
    Close to the boundary, we first notice that $   u\cdot \nabla u= u_{1} \partial_{1} u  + u_{2} \partial_{2} u+ u_3 \partial_{3} u $  can be written  
    $$ u\cdot \nabla u= u_{1} \partial_{y^1} u  + u_{2} \partial_{y^2}u+ u \cdot \n \, \partial_{z} u$$
     where $u_{i}$, $i= 1, \, 2$  and $3$ are the coordinates of $u$ in the standard canonical basis of $\mathbb{R}^n$
      and $\n$  is  defined by
      $$ \n=  \left(\begin{array}{ccc} -  \partial_{1} \psi \\ - \partial_{2} \psi  \\1 \end{array} \right) . $$
       Note, that when the boundary is  given by $x_{3}= \psi(x_{1}, x_{2})$, $\n$ 
       is a normal (non-unitary) vector field. 
      Moreover,  we also have that $Z^I = \partial_{y^1}^{\alpha_{1}} \partial_{y^2}^{\alpha_{2}}
       (\varphi(z) \partial_{z})^{\alpha_{3}}$. Since $u\cdot \n$ vanishes on the boundary $z=0$, we
         can use the same estimates as in \eqref{C1+}, \eqref{C1+2}, \eqref{C1+3}, \eqref{C1+4}
         with $u_{3}$ replaced by $u \cdot \n$.
       Note that we have the estimate
       $$ \|u \cdot \n \|_{m} \leq C_{m+1} \|u \|_{m}$$
       which explains the dependence in $C_{m+1}$ in \eqref{C1b} and also that
       $$ \| \partial_{z} u \|_{m-1} \leq C_{m} \| \nabla u \|_{m-1}.$$
       
       Consequently, a standard energy estimate for \eqref{NSbalpha} yields
       \beq
       \label{estconormb1} {d \over dt} {1 \over 2} \|Z^I u\|^2 \leq   \eps \int_{\Omega} Z^I \Delta u \cdot Z^I u
        - \int_{\Omega } Z^I \nabla p \cdot Z^Iu + C_{m+1}  \|  u \|_{W^{1, \infty}} \big( \|u \|_{m} + \| \partial_{z}u \|_{m-1} \big) \|u\|_{m}\eeq
        We shall first estimate the first term above in the right hand side. 
        To evaluate this term through integration by parts, we  shall  need  estimates of  the trace of  $u$ on the boundary.
       At first, thanks to the Navier boundary condition under the form  \eqref{N2}, we have
      that
      \beq
      \label{Pi1b} | \Pi  \partial_{n} u |_{H^{m}(\partial\Omega)} \leq | \theta(u)  |_{H^{m}(\partial \Omega)}
       +  2 \alpha  |u|_{H^{m}(\partial \Omega)} 
        \leq  C_{m+2} |u |_{H^{m}(\partial \Omega)}, \quad \forall m\geq 0.\eeq
       To estimate the  normal part of $\partial_{n}u$, we can use that $\nabla \cdot u$= 0.
       Indeed,  we  have
       \beq
       \label{divcoord} \nabla \cdot u= \partial_{n}u \cdot n + \big( \Pi\partial_{y^1} u \big)^1 + 
        \big(\Pi \partial_{y^2} u \big)^2
        \eeq
        and hence, we immediately get that
      \beq
      \label{Pi2b}
       | \partial_{n} u \cdot n |_{H^{m-1}(\partial \Omega)} \leq C_{m} | u |_{H^m(\partial \Omega)}.
       \eeq
       Note that by combing these two last estimates, we have in particular that
       \beq
       \label{Pi3b}
       | \nabla u |_{H^{m-1}(\partial \Omega)} \leq C_{m+1}  |u|_{H^m(\partial \Omega)}.
       \eeq
       Finally, let us notice that since $u\cdot n=0$ on the boundary,  we have that
       \beq
       \label{Piun3}
       |   (Z^\alpha u) \cdot n |_{H^1(\partial \Omega)} \leq C_{m+2} | u |_{H^m(\partial \Omega)}, \quad
        | \alpha |=m.\eeq 
        
       Next, we can write that
       \begin{eqnarray*}
        \eps\int_{\Omega}   Z^I \Delta u \cdot   Z^I u
       &  =  &  2  \eps  \int_{\Omega}   \big(\nabla \cdot Z^I   S u  \big)\cdot  Z^I u
         +   \eps \int_{\Omega} \big([Z^I, \nabla \cdot]  S  u\big) \cdot  Z^Iu \\  &= &   I +  II.
         \end{eqnarray*}
         By integration by parts, we get for the first term that
         $$ I= - \eps \int_{\Omega} Z^I  S u \cdot \nabla Z^I u + \eps \int_{\partial \Omega}
          \big( (Z^I  S u)\cdot n \big)  \cdot Z^I u$$
          and we note that
     $$ - \eps \int_{\Omega} Z^I  S u \cdot \nabla Z^I u =  -\eps \|  S ( Z^I u) \|^2
      + \eps \int_{\Omega} [Z^I,  S] u \cdot \nabla Z^I u.$$
      Consequently, thanks to the  Korn inequality, there exists $c_{0}>0$ (depending only  $C_{1}$)
       such that 
     $$  \eps \int_{\Omega} Z^I  S u \cdot \nabla Z^I u  \leq   - c_{0}\eps \|   \nabla  ( Z^I u) \|^2
      + C_{1} \|u \|_{m}^2
      + \eps \int_{\Omega} [Z^I,  S] u \cdot \nabla Z^I u.$$
      Moreover,  the commutator term can be bounded by
      $$ \Big| \eps \int_{\Omega} [Z^I, S] u \cdot \nabla Z^I u \Big|
       \leq C_{m+1} \,  \eps  \| \nabla Z^m u \|\, \|  \nabla u \|_{m- 1}.$$
       It remains to estimate the boundary term in the expression for $I$. 
       We can first notice that
       $$   \int_{\partial \Omega}
          \big( (Z^I S u)\cdot n \big)  \cdot Z^I u= \int_{\partial \Omega} Z^I\big( \Pi \big( Su  \cdot  n \big) \big)
           \cdot \Pi Z^I u + \int_{\partial \Omega} Z^I\big( \partial_{n} u \cdot n \big)\, Z^I u \cdot n 
            + \mathcal{C}_{b}$$
          where the commutator term $\mathcal{C}_{b}$ can be bounded by 
      $$| \mathcal{C}_{b}| \leq C_{m+1} |\nabla u |_{H^{m-1}(\partial \Omega)}  |u|_{H^{m}(\partial \Omega)}
 \leq C_{m+1}   |u|_{H^{m}(\partial \Omega)}^2$$
  thanks to a new use of \eqref{Pi3b}. 
  For the main term, we   write that  thanks to the Navier boundary condition \eqref{N}  we have     
        $$ \Big| \int_{\partial \Omega} Z^I\big( \Pi \big( Su  \cdot  n \big) \big)
           \cdot \Pi Z^I u  \Big| \leq C_{m+1} |u |_{H^m(\partial \Omega)}^2$$
     and that by integrating once along the boundary, we have that
   $$\Big|  \int_{\partial \Omega} Z^I\big( \partial_{n} u \cdot n \big)\, Z^I u \cdot n  \Big|
    \lesssim |\partial_{n}u \cdot n |_{H^{m-1}(\partial \Omega)} \, | Z^I u \cdot n |_{H^1(\partial \Omega)}
    \leq C_{m+2} |u |_{H^m(\partial \Omega)}^2$$
    where the last estimate comes from \eqref{Pi2b}, \eqref{Piun3}.
    
%             To estimate $I_{1}$, it suffices to observe that
%        $$ |I_{1}| \leq  \eps | \Pi \partial_{n} u|_{H^m(\partial \Omega)}\, |u|_{H^m(\partial \Omega)}
%          +  C_{m+1} \, \eps\,  | \nabla u |_{H^{m-1}(\partial \Omega)}  |u |_{H^m(\partial \Omega)}.$$
%          This yields thanks to \eqref{Pi3b} and \eqref{Pi1b}, 
%          $$  |I_{1}| \leq  C_{m+2} \,  \eps  |u |_{H^m(\partial \Omega)}^2.$$
%          Next, we can write $I_{2}$ under  the form
%          $$ I_{2}= \eps  \int_{\partial \Omega} Z(\partial_{n} Z^{\tilde{I}}u) \cdot n \, Z^I u \cdot n + \mathcal{O}(1)
%          C_{2}\, \eps  |\nabla u|_{H^{m-1}(\partial \Omega)} \, |u|_{H^m(\partial \Omega)}$$
%          where $|\tilde{I}| \leq m-1$.
%          The second term is again well controlled thanks to \eqref{Pi3b}.
%           For the first, term, we can  integrate by parts to get
%          $$ \Big|  \eps  \int_{\partial \Omega} Z(\partial_{n} Z^{\tilde{I}}u) \cdot n \, Z^I u \cdot n \Big| \leq 
%           \eps \, C_{m+1} | \nabla u |_{H^{m-1}(\partial \Omega)} \, |Z^I u \cdot n |_{H^1(\partial \Omega)}$$
%            and hence we obtain from \eqref{Pi3b} and \eqref{Piun3} that
%            $$  |I_{2} | \leq C_{m+2}\, \eps \,     |u|_{H^m(\partial \Omega)}^2.$$
             We have thus proven that
           $$  \eps \Big| \int_{\partial \Omega}
          \big( (Z^I  S u)\cdot n \big)  \cdot Z^I u \Big|
           \leq C_{m+2}\, \eps \, |u|_{H^m(\partial \Omega)}^2.$$
           This yields
        \beq
        \label{conormbI}  I \leq 
        - \eps c_{0} \| \nabla Z^I u  \|^2 + C_{m+2} \big(   \eps  \| \nabla Z^m u \| \big( \|u\|_{m} + \| \nabla u \|_{m-1}\big) 
         + |u |_{H^m(\partial \Omega)}^2 \big).\eeq
         It remains to estimate $II$.
          We can expand $[Z^I, \nabla \cdot]$ as a sum of terms under the form
          $\beta_{k}  \partial_{k} Z^{\tilde{I}}$ with $|\tilde{I}| \leq m-1$ and $|\beta_{k}|_{L^\infty} \leq C_{m+1}$.
           Consequently,  we need to estimate
           $$\int_{\Omega} \beta_{k}\partial_{k }\big(Z^{\tilde{I}} S u\big) \cdot Z^I u.$$
            By using an integration by parts, we get that
          $$ \eps  \Big| \int_{\Omega} \beta_{k}\partial_{k }\big(Z^{\tilde{I}} S u\big) \cdot Z^I u \Big|
           \leq  C_{m+2}\, \eps\,\Big(  \| \nabla Z^{m-1} u\| \, |\nabla Z^m u\|
            +  \|u\|_{m}^2 +  |\nabla u |_{H^{m-1}(\partial \Omega)} \, |u|_{H^m(\partial \Omega)}\Big).$$
            Consequently, from a new use of  \eqref{Pi3b} we get that
          \beq
          \label{conormbII}
          | II | \leq   
          C_{m+2}\, \eps\,\Big(  \| \nabla Z^{m-1} u\| \, \|\nabla Z^m u\|
            +  \|u\|_{m}^2 +  |u|_{H^m(\partial \Omega)}^2\Big).\eeq
           To estimate the  term involving  the pressure in \eqref{estconormb1}, we write 
          $$
           \Big|  \int_{\Omega} Z^I \nabla p \cdot Z^I u \Big|
           \leq  \|\nabla^2 p_{1} \|_{m-1}\, \|u\|_{m} +  \Big| \int_{\Omega} Z^I \nabla p_{2} \cdot Z^I u \Big|.$$
           For the last  term, we   have
         $$  \Big| \int_{\Omega} Z^I \nabla p_{2} \cdot Z^I u \Big| \leq  
          \Big| \int_{\Omega} \nabla Z^I p_{2} \cdot Z^I u  \Big| + C_{m+ 1 } \| \nabla p_{2} \|_{m-1} \, \|u \|_{m}$$
           and we can integrate by parts to get
        $$   \Big| \int_{\Omega} \nabla Z^I p_{2} \cdot Z^I u  \Big|
         \leq  \| \nabla p_{2} \|_{m-1} \, \| \nabla Z^I u \|
          + \Big| \int_{\partial \Omega} Z^I p_{2} \, Z^I u \cdot n \Big|.$$
          To control the boundary term, when $m \geq 2$,  we  integrate by parts once along the boundary to obtain
          $$  \Big| \int_{\partial \Omega} Z^I p_{2} \, Z^I u \cdot n \Big| \leq C_{2} \| Z^{\tilde{I}}p_{2} \|_{L^2(\partial \Omega)} \, \| Z^I u \cdot n \|_{H^1(\partial \Omega)}$$
          where $ \tilde{I}= m-1.$ Next, we use 
           \eqref{Piun3} and
            the trace Theorem to get  that 
        $$  \Big| \int_{\Omega} Z^I \nabla p_{2} \cdot Z^I u \Big| \leq  C_{m+2} 
         \| \nabla p_{2} \|_{m-1} \big( \| \nabla Z^I u \| + \|u \|_{m} \big).$$
         We have thus proven that
        $$   \Big|  \int_{\Omega} Z^I \nabla p \cdot Z^I u \Big|
           \leq C_{m+2} \Big(   \|\nabla^2 p_{1} \|_{m-1}\, \|u\|_{m}    +  \| \nabla p_{2} \|_{m-1} \big( \| \nabla Z^I u \| + \|u \|_{m} \big) \Big).$$
        
        Consequently, by collecting the previous estimates, we    
          deduce from \eqref{estconormb1} that
       \begin{eqnarray}
      \nonumber  & & {d \over dt} {1 \over 2} \| u\|_{m}^2  + \eps \, c_{0}\,  \| \nabla Z^m u \|^2 \\
    \nonumber  & &   \leq    C_{m+2 } \Big( \eps  \| \nabla Z^m u \| \, ( \|u\|_{m} + \|\nabla Z^{m-1} u \|) 
          + |u |_{H^m(\partial \Omega)}^2  \\
        \nonumber & & \quad     +    \|\nabla^2 p_{1} \|_{m-1}\, \|u\|_{m}  + \| \nabla p_{2} \|_{m-1} \big(  \| \nabla Z^m u \| + \|u \|_{m}\big)
            +\big( 1 +   \|  u \|_{W^{1, \infty}}\big) \big( \|u \|_{m}^2 + \| \partial_{z}u \|_{m-1} \big)^2  \Big).
        \end{eqnarray} 
          By using the  Trace Theorem 
         and the Young inequality, we finally get that
         \begin{eqnarray}
      \nonumber  {d \over dt} {1 \over 2} \| u\|_{m}^2  +{c_{0} \over 2 } \eps  \| \nabla Z^m u \|^2
      &   \leq  &  C_{m+2 } \Big(   \eps  \|\nabla Z^{m-1} u \|_{m}^2
        +    \|\nabla^2 p_{1} \|_{m-1} \|u \|_{m}  + \eps^{-1} \| \nabla p_{2} \|_{m-1}^2  \\
         & & \quad  \nonumber     + \big( 1+   \|  u \|_{W^{1, \infty}}\big) \big( \|u \|_{m}^2 + \| \partial_{z}u \|_{m-1} ^2\big) \Big)
        \end{eqnarray}
        and the result follows by using the induction assumption to control
        $  \eps  \|\nabla Z^{m-1} u \|_{m}^2$. 
         This ends the proof of Proposition \ref{conormb}.
         
        \subsection{Normal derivative estimates}
         In view of Proposition \ref{conormb}, we shall now provide an estimate for $\| \nabla u \|_{m-1}$.
          Of course, the only difficulty is   to estimate $ \| \chi \, \partial_{z} u \|_{m-1}$
           or  $ \| \chi \, \partial_{n} u \|_{m-1}$
           where $\chi$ is  compactly supported in one of the $\Omega_{i}$  and  with value one in a
           vicinity of the boundary.  Indeed, we have by definition of the norm that
           $ \| \chi \, \partial_{y^i} u \|_{m-1} \leq C_{m}\|u\|_{m}$, $  i=1, \, 2$.
            We shall thus use the local coordinates \eqref{coord}.
%             Since in these coordinates, we may use that
%             $$ \partial_{i}= \alpha^1 \partial_{y^1}+ \alpha^2 \partial_{y^1} + \alpha^n \partial_{n}, $$
%              we immediately get that
%             $$ | \partial_{i} u |_{H^{m-1}(\partial \Omega} \leq C_{m} |u |_{H^m(\partial \Omega)}

          At first, thanks to \eqref{divcoord},
         we immediately get that
         \beq
         \label{dnb}
          \| \chi \partial_{n}u \cdot n \|_{m-1} \leq C_{m} \|u \|_{m}.
          \eeq
        It thus remains to estimate $\| \chi \Pi (\partial_{n}u ) \|_{m-1}.$ Let us set
        \beq
        \label{etab1def}
         \eta =  \chi\Pi \Big( \big(\nabla u +  \nabla u^t \big) n \Big) + 2 \alpha  \chi \Pi u= 
       \chi\Pi \Big( Su\,  n \Big) + 2 \alpha  \chi \Pi u   .\eeq
%         We recall that $n$ and $\Pi$ are defined inside  $\Omega_{i}$ by
%         $$ n(x)= n(y), \quad \Pi(x)= \Pi(y), $$
%         they are thus independent of  $x_{3}$.
        In view of  the Navier condition \eqref{N}, we obviously have
         that $\eta$ satisfies an homogeneous Dirichlet boundary condition on the boundary:
        \beq
        \label{dirb}
        \eta_{/\partial \Omega}=0.
        \eeq
        Moreover, since an alternative way to write $\eta$ in the vicinity of the boundary  is
        \beq
        \label{etab2}
         \eta=  \chi  \Pi \partial_{n} u+ \chi\Pi \Big( \nabla(u \cdot n) - Dn \cdot u - \, u \times (\nabla \times n)
          + 2 \alpha  u\Big),
          \eeq
          we immediately get that
      $$
       \| \chi\,  \Pi \partial_{n} u \|_{m-1} \leq C_{m+1}\big( \|\eta  \|_{m-1} +  \| u\|_{m} + \|\partial_{n} u \cdot n  \|_{m-1}
        \big).$$
        and  hence thanks to \eqref{dnb} that
       \beq
       \label{etabu}
        \| \chi \Pi \partial_{n} u \|_{m-1} \leq C_{m+1}\big( \|\eta  \|_{m-1} +  \| u\|_{m} \big).
        \eeq
        As before, it  is thus equivalent to estimate $ \|\Pi \partial_{n} u \|_{m-1} $ or $ \|\eta  \|_{m-1}$.
        Note that we  have taken a slightly different definition for $\eta$  in comparison with
         the half space case. The reason is that  it is better to compute the evolution equation for
          $\eta $ with the expression   \eqref{etab1def}  than with the expression \eqref{etab2}
           or with the  expression involving the vorticity.  Indeed,  these last two
            forms require a boundary with more regularity. The price to pay will be that   since we do not
            use the vorticity,  the pressure will again appear in our estimates.
           
           We shall establish the following conormal estimates for $\eta$:
        \begin{prop}
        \label{normb}
        For every $m \geq 1$, we have that
        \begin{eqnarray}
        \label{estnormb}
 & &  \| \eta (t ) \|_{m-1}^2 + \eps \int_{0}^t \| \nabla \eta \|_{m-1}^2    
   \leq  C_{m+2} \big( \|u(0) \|_{m}^2 + \| \nabla u(0) \|_{m-1}^2\big)  \\
    & & \nonumber +  C_{m+2} \int_{0}^t
   \Big(  \big( \|\nabla^2 p_{1} \|_{m-1} + \| \nabla p \|_{m-1}\big) \| \eta \|_{m} +  \eps^{-1} \|\nabla p_{2} \|_{m-1}^2   \\
   & & \quad \quad \quad \quad   \nonumber + \big( 1 + \|u \|_{2, \infty} + \|\nabla u \|_{1, \infty} \big) \big(  \| \eta \|_{m-1}^2 + \|u \|_{m}^2 + \| \nabla u \|_{m-1}^2  \big) \Big)
        \end{eqnarray}
        
        \end{prop}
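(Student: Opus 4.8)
The plan is to follow the half-space argument of Proposition \ref{norm+}, the additional work stemming from the fact that the normal $n$ is now a non-constant vector field and from the cut-off $\chi$ in the definition \eqref{etab1def} of $\eta$. First I would derive the transport--diffusion equation satisfied by $\eta$. Applying the (time-independent) operator $\chi\,\Pi\big(S(\cdot)\,n\big)+2\alpha\,\chi\,\Pi(\cdot)$ to the momentum equation \eqref{NS}, and using that $S(\nabla p)\,n=(\nabla^2 p)\,n=\nabla(\partial_{n}p)$ modulo smooth terms of order one in $p$, and that $S(\eps\Delta u)\,n$ equals $\eps\Delta(Su\,n)$ modulo $\eps$ times terms carrying one derivative of $n$, one obtains
\beq
\partial_{t}\eta+u\cdot\nabla\eta-\eps\Delta\eta=\chi\,\Pi\big((\nabla^2 p)\,n\big)+2\alpha\,\chi\,\Pi\nabla p+F_{\eta},\qquad \eta_{/\partial\Omega}=0,
\eeq
where $F_{\eta}$ collects: terms quadratic in $\nabla u$ coming from $S(u\cdot\nabla u)\,n$ and from commuting $u\cdot\nabla$ with $\chi\,\Pi\big(S(\cdot)\,n\big)$, all of the type controlled by $\|\nabla u\|_{L^\infty}\big(\|u\|_{m}+\|\nabla u\|_{m-1}\big)$ via Lemma \ref{gag}; terms supported away from $\partial\Omega$ produced by $\nabla\chi$, controlled by $\|u\|_m$ through the ordinary Sobolev embedding as for $\eta^{int}$ in the half-space case (cf.\ \eqref{etaint}); and $\eps$ times geometric terms (involving up to two derivatives of $\psi$, hence bounded by $C_{m+2}$ after one more conormal differentiation) coming from the commutator of $\Delta$ with $\chi\,\Pi\big(S(\cdot)\,n\big)$. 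I would then split $p=p_1+p_2$ as in Proposition \ref{conormb}: $p_1$ is the ``Euler'' pressure, estimated through $\|\nabla^2 p_1\|_{m-1}$, while $p_2$ is harmonic with Neumann data $\eps\Delta u\cdot n$ and will contribute only the $\eps$-small term $\eps^{-1}\|\nabla p_2\|_{m-1}^2$ after a Young inequality.

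Next I would run the conormal energy estimate by induction on $m$. The case $m=1$ is a plain $L^2$ estimate on the $\eta$-equation, using that $F_\eta$ is quadratic in $\nabla u$ together with \eqref{etabu}, \eqref{dnb} to pass freely between $\|\nabla u\|_{m-1}$ and $\|\eta\|_{m-1}$ (up to $\|u\|_m$). For $|I|=m-1$, applying $Z^I$ yields a transport--diffusion equation for $Z^I\eta$ with source $Z^I F_\eta$, the two pressure terms, and the commutators $\mathcal{C}_1=[Z^I,u\cdot\nabla]\eta$ and $\mathcal{C}_2=-\eps[Z^I,\Delta]\eta$. Since $\eta$ vanishes on $\partial\Omega$ and the $Z_k$ are tangential, $Z^I\eta$ vanishes on $\partial\Omega$, so the $L^2$ estimate carries no boundary term from $u\cdot\nabla$ or $\Delta$; $\mathcal{C}_2$ and the $\eps$-geometric part of $F_\eta$ are handled by integration by parts exactly as for $\mathcal{C}_3$ in the half-space (cf.\ \eqref{nC2}, \eqref{C3}), producing terms of the form $\eps\|\nabla\eta\|_{m-1}\big(\|\nabla\eta\|_{m-2}+\|\eta\|_{m-1}+\|\nabla u\|_{m-1}\big)+\|\eta\|_{m-1}^2$ to be absorbed by Young. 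The pressure terms are treated by moving $Z^I$ onto the gradient and integrating by parts (the boundary term vanishing since $Z^I\eta$ vanishes on $\partial\Omega$), which, after the splitting, gives $\big(\|\nabla^2 p_1\|_{m-1}+\|\nabla p\|_{m-1}\big)\|\eta\|_m+\eps^{-1}\|\nabla p_2\|_{m-1}^2$.

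The crux, as in the half-space case, is the convection commutator $\mathcal{C}_1$. Writing $u\cdot\nabla=u_1\partial_{y^1}+u_2\partial_{y^2}+(u\cdot\n)\,\partial_z$ in the local coordinates \eqref{coord} and expanding $\mathcal{C}_1=\sum_{\beta+\gamma=I,\,\beta\neq0}c_{\beta,\gamma}\,Z^\beta u\cdot Z^\gamma\nabla\eta+u\cdot[Z^I,\nabla]\eta$, the last term and the tangential pieces are bounded by $\|u\|_{W^{1,\infty}}\|\eta\|_{m-1}+\|\nabla u\|_{L^\infty}\big(\|\eta\|_{m-1}+\|u\|_m\big)$ via Lemma \ref{gag}. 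The dangerous piece carries a factor $Z^\gamma\partial_z\eta$, a normal derivative of $\eta$ that we do \emph{not} expect to be uniformly bounded in $\eps$ in the boundary layer; here I use that $u\cdot\n$ vanishes on $\partial\Omega$ to rewrite $Z^\beta(u\cdot\n)\,Z^\gamma\partial_z\eta$ as a sum of terms $c_{\tilde\beta,\tilde\gamma}\,Z^{\tilde\beta}\big(\tfrac{1}{\varphi}u\cdot\n\big)\cdot Z^{\tilde\gamma}\big(\varphi\,\partial_z\eta\big)$ with $\tilde\beta+\tilde\gamma\le m-1$, $|\tilde\gamma|\le m-2$, by redistributing the weight $\varphi$ through the commutator manipulations of the half-space proof (those following \eqref{u3b}); then $\varphi\,\partial_z\eta$ is, modulo smooth coefficients, a conormal derivative of $\eta$, while $\tfrac{1}{\varphi}u\cdot\n$ is controlled in $L^\infty$ by $\|u\|_{W^{1,\infty}}$ and in conormal norm by $\|\partial_z u\|_{m-1}\lesssim\|\nabla u\|_{m-1}$ through the Hardy inequality and the divergence-free condition. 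This reproduces $\|\mathcal{C}_1\|\lesssim\big(1+\|u\|_{2,\infty}+\|\partial_z u\|_{1,\infty}+\|Z\eta\|_{L^\infty}\big)\big(\|\eta\|_{m-1}+\|u\|_m\big)$, exactly as in \eqref{C1n}. Collecting the commutator, source and pressure bounds, absorbing the $\eps\|\nabla\eta\|_{m-1}$ and $\eps\|\eta\|_m$ terms into the left-hand side via Young, and integrating in time yields \eqref{estnormb}. I expect the main obstacles to be the careful redistribution of the weight $\varphi$ that keeps the uncontrolled normal derivative $\partial_z\eta$ out of the estimate, and the bookkeeping of the geometric constants $C_{m+2}$ coming from $n$, $\Pi$, $\theta$ and their conormal derivatives (which is precisely why $\Omega$ is assumed $\mathcal{C}^{m+2}$).
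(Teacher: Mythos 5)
Your proposal is correct and follows essentially the same route as the paper: the same vanishing unknown $\eta$ from \eqref{etab1def}, the same transport--diffusion equation with source split into quadratic, cut-off and $\eps$-geometric parts (the paper's $F^b$, $F^\chi$, $F^\kappa$), the same $p=p_{1}+p_{2}$ splitting with an integration by parts for the $p_{2}$ term using $Z^{I}\eta_{/\partial\Omega}=0$, and the same induction with tangential energy estimates where the convection commutator is tamed by the vanishing of $u\cdot\n$ on the boundary, the Hardy inequality and the redistribution of the weight $\varphi$, exactly as in \eqref{C1n}--\eqref{Cbeta}. The only cosmetic differences (deriving the equation from the momentum equation rather than from the equation for $\nabla u$, and treating the $\eps$-geometric second-order terms by integration by parts rather than by converting $\eps\|\chi\nabla^{2}u\|_{m-1}$ through \eqref{etab2}, \eqref{dnb}) do not change the argument.
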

      Note that by combining Proposition \ref{conormb}, Proposition \ref{normb} and  \eqref{dnb}, \eqref{etabu},
       we immediately obtain the global estimate 
    \begin{eqnarray}
        \label{estnormb+}
 & &  \|u(t) \|_{m}^2 +  \| \nabla u (t ) \|_{m-1}^2 + \eps \int_{0}^t \| \nabla \eta \|_{m-1}^2    
   \leq  C_{m+2} \big( \|u(0) \|_{m}^2 + \| \nabla u(0) \|_{m-1}^2\big)  \\
    & & \nonumber +  C_{m+2} \int_{0}^t
   \Big(   \|\nabla^2 p_{1} \|_{m-1} \big( \| u \|_{m}+
    \| \nabla u \|_{m-1} \big) +  \eps^{-1} \|\nabla p_{2} \|_{m-1}^2   \\
   & & \quad \quad \quad \quad   \nonumber + \big( 1 + \|u \|_{2, \infty} + \|\nabla u \|_{1, \infty} \big) \big(   \|u \|_{m}^2 + \| \nabla u \|_{m-1}^2  \big) \Big)
        \end{eqnarray}
        for $m \geq 2$.
        
        \subsection*{Proof of Proposition \ref{normb}}

        Note that  $M=\nabla u$ solves the equation
        $$ \partial_{t} M + u \cdot \nabla M - \eps \Delta M= - M^2 - \nabla^2 p$$
        where $\nabla^2 p$ denotes  the Hessian matrix of the pressure. 
        Consequently, we get that $\eta$ solves the equation
        \beq
        \label{eqetab}
        \partial_{t} \eta + u \cdot \nabla \eta - \eps \Delta \eta=   F - \chi \Pi\big( \nabla^2 p\, n\big)
        \eeq
        where the source term $F$ can be decomposed into
        \beq
        \label{eqFb}
        F= F^b+ F^\chi+ F^\kappa
        \eeq
        where : 
        \begin{eqnarray}
        \label{Fb}
       & &  F^b= -\chi \Pi \big((\nabla u)^2+ (\nabla u^t)^2\big)n - 2 \alpha \chi \Pi  \nabla p, \\
        \label{Fchi}
      & &   F^\chi=  -  \eps \Delta \chi \Big( \, \Pi Su\,  n + 2 \alpha \Pi u\Big)
       \nonumber    -  2 \eps \nabla \chi \cdot \nabla \Big(   \, \Pi  Su\,  n + 2 \alpha \Pi u\Big) \\
       \nonumber    & &  \hspace{1cm} + (u \cdot \nabla \chi) \Pi \Big(  \big(  Su\, n + 2 \alpha  u\Big),  \\
        \label{Fkappa}  & & F^\kappa=\chi \big( u \cdot \nabla \Pi\big)\Big(   Su \, n + 2 \alpha u \Big)
            + \chi  \Pi  \big( Su\, \big( u \cdot \nabla n \big)  \big) \\
 & &  \hspace{1cm}    \nonumber         - \eps  \chi\big( \Delta \Pi \big)\Big( Su \, n + 2 \alpha u \big)  - 2 \eps \chi \nabla \Pi \cdot \nabla
             \big( Su \, n + 2 \alpha u \big) \\
 & &   \hspace{1cm}   \nonumber          -  \eps \chi \Pi \Big(  Su \, \Delta n  + 2  \nabla  Su \cdot \nabla n  \Big). 
                     \end{eqnarray}

      Let us start with the proof of the $L^2$ energy estimate i.e.  the case $m=1$ in Proposition
       \ref{normb}. By multiplying \eqref{eqetab} by $\eta$, we immediately get that
      \beq
      \label{L2etab} {d \over dt} {1 \over 2} \| \eta \|^2 + \eps \| \nabla \eta \|^2 = \int_{\Omega} F \cdot \eta
      - \int_{\Omega} \chi \Pi\big( \nabla^2p\, n \big)  \cdot \eta.\eeq
      To estimate the right handside, we note that
      \beq
      \label{Fbm-1}
      \|F^b \|_{m-1} \leq C_{m}\Big(  \|u \|_{W^{1, \infty}} \|\nabla u \|_{m-1}  + \|\nabla p \|_{m-1} \Big)
      \eeq
      and also that
      \beq
      \label{Fchim-1} \|F^\chi\|_{m-1} \leq C_{m+1} \Big(  \eps \|\nabla u \|_{m}+
   \big( 1+    \|u \|_{W^{1, \infty}}\big) \|  u \|_{m} \Big).
       \eeq
       Note that we have used that  since all the terms in $F^\chi$ are supported away from the boundary, 
        we can control all the derivatives by the $\| \cdot \|_{m}$ norms. Finally, we also have that
%         notice
%         that we can split $F^\kappa$ into
%         \beq
%         \label{Fkappa1}
%          F^\kappa = F^{\kappa, 0}+ \tilde{F}^{\kappa}
%          \eeq
%           with
%         $$ \tilde{F}^{\kappa}= -2 \eps \nabla \Pi \cdot \nabla \big( Su\, n \big)
%          -  2 \eps \chi  \Pi \big( \nabla Su \cdot \nabla n\big)$$
%          and 
        \beq
        \label{Fkappam-1} \| F^{\kappa} \|_{m-1} \leq C_{m+2} \Big( \eps \|u \|_{m} + \eps  \| \nabla u \|_{m-1}
         + \eps \| \chi \nabla^2 u \|_{m-1}
         + \|u \|_{W^{1, \infty } }( \|u \|_{m-1} + \| \nabla u \|_{m-1}) \Big).\eeq
         To estimate the last term in the right hand side of  \eqref{eqetab},  we split the pressure to get
       $$ \Big|  \int_{\Omega} \chi \Pi\big( \nabla^2p\, n \big)  \cdot \eta \Big|
        \leq \| \nabla^2  p_{1}\|\, \| \eta \|  +  \Big|  \int_{\Omega} \chi \Pi\big( \nabla^2p_{2}\, n \big)  \cdot \eta \Big|.$$
        Since $\eta$ vanishes on the boundary, we can integrate by parts the last term to obtain
     $$  \Big|  \int_{\Omega} \chi \Pi\big( \nabla^2p_{2}\, n \big)  \cdot \eta \Big|
      \leq C_{2} \| \nabla p_{2}\| \big( \| \nabla \eta \| + \| \eta \| \big).$$ 
      Consequently, by plugging these estimates into \eqref{L2etab}, we immediately get that
     \begin{eqnarray}
   \label{etabL21}  && 
      {d \over dt} {1 \over 2} \| \eta \|^2 + \eps \| \nabla \eta \|^2 \\
       & & 
\nonumber   \leq    C_{3}\Big( \big( \eps \| \nabla  u \|_{1} + \eps  \|\chi\nabla^2 u \| \big) \, \|\eta \| +   \|\nabla p_{2} \|  \big( \|\nabla \eta  \|+ \| \eta \|\big)  \\
 \nonumber  & & +\big( \|\nabla ^2 p_{1} \| + \| \nabla p_{1} \| \big) 
     \| \eta\| 
      +( 1 + \|u \|_{W^{1, \infty}}) \big( \|u \|_{1}+ \| \nabla u \| \big) \Big).\end{eqnarray}  
     To conclude, we only need to estimate $ \eps \| \chi\nabla^2 u \|$.
      Note that we have that
      $$   \eps \| \chi\nabla^2 u \| \lesssim \eps  \| \chi \nabla \partial_{n} u \| + \eps  C_{2} \| \nabla u \|_{1}$$
       and hence, by using \eqref{dnb} and \eqref{etab2} that
       $$ \|\chi \nabla \partial_{n} u \| \leq C_{3} \Big( \| \nabla \eta \| +  \|\nabla u \|_{1} +  \|u\|_{1}\Big).$$
       Consequently, by using \eqref{etabL21} and the Young inequality, we finally get that
   \begin{eqnarray*}
 & &     {d \over dt} {1 \over 2} \| \eta \|^2 +{ \eps \over 2} \| \nabla \eta \|^2    \\  
& &       \leq    C_{3}\Big(   \eps \| \nabla  u \|_{1}  \, \|\eta \| +   \big(\|\nabla p \|  + \|\nabla ^2 p_{1} \|\big) \| \eta \| + 
 \eps^{-1} \| \nabla p_{2} \|^2 
     +( 1 + \|u \|_{W^{1, \infty}}) \big( \|u \|_{1}+ \| \nabla u \| \big) \Big).
\end{eqnarray*}          
   Since  $\eps \| \nabla u \|_{1}$  is already estimated in Proposition \ref{conormb}, this yields \eqref{estnormb}  for $m=1$.
  
  To prove the general case, let us assume that \eqref{estnormb} is proven for $k \leq m-2.$ 
   We get from  \eqref{eqetab}  for $|\alpha |= m-1$ that 
   $$ \partial_{t} Z^\alpha \eta + u \cdot \nabla Z^\alpha \eta - Z^\alpha \Delta \eta = Z^\alpha F- Z^\alpha \big(
    \chi \Pi (\nabla^2p \, n ) \big) + 
    \mathcal{C}$$
    where
    $$\mathcal{C}= -  [Z^\alpha, u \cdot \nabla ] \eta.$$ 
     A standard energy estimate yields
    \beq
    \label{etabm-11} {d \over dt } {1 \over 2} \|Z^\alpha  \eta \|^2 \leq 
   \eps    \int_{\Omega} Z^\alpha \Delta \eta \cdot Z^\alpha \eta 
      + \big(\|F \|_{m-1}+ \| \mathcal{C}\|\big) \, \|\eta\|_{m-1} -  \int_{\Omega}
     Z^\alpha \big(
    \chi \Pi (\nabla^2p \, n ) \big) \cdot Z^\alpha \eta 
      .\eeq
        To estimate
       the first term in the right hand side,  we need to estimate
       $$ I_{k} = \int_{\Omega} Z^\alpha \partial_{kk} \eta \cdot Z^\alpha \eta, \quad k=1,\, 2, \, 3.$$
       Towards this, we write
       \begin{eqnarray*}
        I_{k} & = &   \int_{\Omega}  \partial_{k} Z^\alpha \partial_{k} \eta \cdot Z^\alpha \eta+
        \int_{\Omega}  [Z^\alpha, \partial_{k}] \partial_{k} \eta \cdot Z^\alpha \eta \\
          & = &
          - \int _{\Omega}  | \partial_{k} Z^\alpha \eta|^2 
           - \int_{\Omega} [Z^\alpha , \partial_{k}] \eta \cdot \partial_{k} Z^\alpha \eta
          +   \int_{\Omega}  [Z^\alpha, \partial_{k}] \partial_{k} \eta \cdot Z^\alpha \eta.
          \end{eqnarray*}      
      Note that there is no boundary term in the integration by parts since $Z^\alpha \eta$
       vanishes on the boundary. To estimate the  last two  terms above, 
       we need to use the structure of the commutator $[Z^\alpha, \partial_{k}].$
        By using the expansion
        $$\partial_{k}= \beta^1 \partial_{y^1} + \beta^2 \partial_{y^2} + 
         \beta^3 \partial_{y^3},$$
     in the local basis, we get  an expansion under the form
     $$ [Z^\alpha, \partial_{k}]f
     =  \sum_{\gamma,  |\gamma| \leq | \alpha |-1} c_{\gamma} \partial_{z} Z^\gamma f
      + \sum_{ \beta, \, |\beta| \leq |\alpha|} c_{\beta} Z^\beta  f $$
      where the   $\mathcal{C}^l$ norm of the coefficients is  bounded
       by $C_{l+m}$. This yields the estimates 
 \begin{eqnarray*}  
 & & \Big|   \int_{\Omega} [Z^\alpha , \partial_{k}] \eta \cdot \partial_{k} Z^\alpha \eta \Big| \\
 & & \leq   C_{m} \| \nabla \eta \|_{m-2} \, \|\nabla Z^{m-1} \eta\|
 \end{eqnarray*}
 and
  \begin{eqnarray*}  
 & & \Big|   \int_{\Omega} [Z^\alpha , \partial_{k}] \partial_{k} \eta \cdot  Z^\alpha \eta \Big| \\
 & & \leq  \sum_{|\gamma|\leq m-2} \Big| \int_{\Omega} c_{\gamma} \partial_{z} Z^\gamma \partial_{k} \eta \cdot
  Z^\alpha \eta  \Big| + C_{m} \| \nabla \eta \|_{m-1} \, \| \eta \|_{m-1}. 
 \end{eqnarray*}
Since $Z^\alpha\eta$ vanishes on the boundary, this yields thanks to an integration by parts 
$$
\Big|   \int_{\Omega} [Z^\alpha , \partial_{k}] \partial_{k} \eta \cdot  Z^\alpha \eta \Big| 
 \leq C_{m+1}  \|\nabla \eta\|_{m-1}\big( \| \nabla \eta \|_{m-2}  + \| \eta \|_{m-1}\big).$$
   Consequently, we get from \eqref{etabm-11} by summing over $\alpha$ and a new use of the Young inequality that
\begin{eqnarray}
 & &\label{etabm-12}   {d \over dt } {1 \over 2} \|\eta \|_{m-1}^2
 + {\eps \over 2} \| \nabla Z^{m-1} \eta \|^2 \\ 
& & \nonumber   \leq C_{m+1} \Big( \eps \| \nabla \eta \|_{m-2}^2+ \| \eta \|_{m-1}^2 +  \big(\|F \|_{m-1}+ \| \mathcal{C}\|\big) \, \|\eta\|_{m-1}\Big) -  \int_{\Omega}
     Z^\alpha \big(
    \chi \Pi (\nabla^2p \, n ) \big) \cdot Z^\alpha \eta .
  \end{eqnarray}
To estimate the right hand side, we first notice that to control  the term involving $F$,
      we can use  \eqref{Fbm-1}, \eqref{Fchim-1} and \eqref{Fkappam-1}. This yields
   \begin{eqnarray}
   \label{Fbm-11}
   \| F \|_{m-1} & \leq &  C_{m+2} \Big( 
    \big(\eps \| \nabla u \|_{m} +  \eps \| \chi \nabla^2 u \|_{m-1} + \| \nabla p \|_{m-1} 
     \big) \|\eta \|_{m} \\
      & & \nonumber  +  ( 1 + \|u \|_{W^{1, \infty}} ) \big( \|u \|_{m}
     + \|\nabla u \|_{m-1}\big) \Big)
   \end{eqnarray}
   It remains to estimate  $\eps \| \chi \nabla^2 u \|_{m-1}$. We can first use that
   $$  \eps \| \chi \nabla^2 u \|_{m-1} \leq \eps \| \chi \nabla \partial_{n} u \|_{m-1}
    + \eps C_{m+1} \big(  \| \nabla u \|_{m} +   \|u \|_{m}\big).$$
Next, thanks to \eqref{etab2} and \eqref{dnb}, we also get that
$$ \eps \| \chi \nabla \partial_{n} u \|_{m-1}
 \leq C_{m+2} \Big(   \eps   \| \nabla u \|_{m} + \|u \|_{m} + \| \nabla \eta \|_{m-1} \Big)$$
  and hence we obtain the estimate
  \begin{eqnarray}
   \label{Fbm-12}
   \| F \|_{m-1} & \leq &  C_{m+2} \Big( 
    \big(\eps \| \nabla u \|_{m} +  \eps \| \nabla \eta  \|_{m-1} + \| \nabla p \|_{m-1} 
     \big) \|\eta \|_{m} \\
      & & \nonumber  +  ( 1 + \|u \|_{W^{1, \infty}} ) \big( \|u \|_{m}
     + \|\nabla u \|_{m-1}\big) \Big).
   \end{eqnarray} 
   
   In view of \eqref{etabm-12}, it remains to estimate $\| \mathcal{C} \|$.
     Note that  by using the local coordinates, we can expand:
     $$ u \cdot  \nabla \eta = u_{1} \partial_{y^1} \eta + u_{2} \partial_{y^2}  \eta+ u \cdot \n\, \partial_{z} \eta. $$
      Consequently,  the estimate  \eqref{C1n} also holds for this term, we thus get that
      \beq
      \label{Cbeta}  \| \mathcal{C} \| \leq C_{m} \Big( \|u \|_{2, \infty} + \|u \|_{W^{1, \infty}}
       + \| Z \eta \|_{L^\infty} \Big)\big( \|\eta \|_{m-1} + \|u \|_{m} \big).\eeq
       
     Finally, it remains to estimate the last term involving the pressure in the right hand side of
       \eqref{etabm-12}. As before, we use the splitting $p=p_{1}+ p_{2}$ and we integrate
        by parts the term involving $p_{2}$. This yields
    \beq
    \label{palphafin} \Big| \int_{\Omega}
     Z^\alpha \big(
    \chi \Pi (\nabla^2p \, n ) \big) \cdot Z^\alpha \eta \Big|
     \leq  C_{m+ 2 } \big(  \| \nabla^2 p_{1}\|_{m-1} \, \| \eta \|_{m} +  \| \nabla p_{2} \|_{m-1} \big( \| \nabla Z^m \eta \|
      + \| \eta \|_{m}\big)\Big).\eeq
      By combining \eqref{etabm-12}, \eqref{Fbm-12},  \eqref{Cbeta}, \eqref{palphafin}   and 
       by using the induction assumption and the Young inequality, we get the
      result.
      
   \subsection{Pressure estimates}
   \begin{prop}
   \label{pressureb}
  For $m \geq 2$,  we have the following estimate for the pressure:
   \begin{eqnarray}
   \label{estpressureb1}
 & &   \| \nabla p_{1} \|_{m-1}  + \| \nabla^2 p_{1} \|_{m-1} 
  \leq C_{m+2}
       \big( 1 + \|u \|_{W^{1, \infty}} \big) \big( \|u \|_{m} + \|\nabla u\|_{m-1} \big), \\
       & & \label{estpressureb2}
        \| \nabla p_{2} \|_{m-1} \leq C_{m+2} \, \eps\, \big(  \| \nabla u \|_{m-1} + \|u \|_{m} \big).
      \end{eqnarray}
   \end{prop}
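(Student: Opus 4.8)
The plan is to reduce the two bounds of Proposition~\ref{pressureb} to elliptic regularity for the Neumann problem $\Delta q=g$ in $\Omega$, $\partial_n q=h$ on $\partial\Omega$. Two observations organize the argument. First, both $p_1$ and $p_2$ do solve such a problem, and the compatibility condition is automatic since in each case $g$ is a divergence and $h$ the corresponding normal flux. Second, the Neumann data will in both cases be rewritten, using $u\cdot n=0$ on $\partial\Omega$ and the Navier condition, in terms of $u$ and its \emph{tangential} derivatives along $\partial\Omega$ only; the anisotropic trace inequality $|u|_{H^{m-1}(\partial\Omega)}\lesssim \|u\|_m^{1/2}\|\nabla u\|_{m-1}^{1/2}+\|u\|_m$ then controls these boundary norms by $\|u\|_m+\|\nabla u\|_{m-1}$.

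For $p_2$, which is harmonic, I would use standard elliptic regularity in ordinary Sobolev spaces (the $\mathcal C^{m+2}$ boundary is more than enough), so that $\|\nabla p_2\|_{m-1}\le \|\nabla p_2\|_{H^{m-1}(\Omega)}\le C_{m+2}\big(|\partial_n p_2|_{H^{m-3/2}(\partial\Omega)}+\|\nabla p_2\|\big)$ with $\|\nabla p_2\|\lesssim \eps|\Delta u\cdot n|_{H^{-1/2}(\partial\Omega)}$. The key point is that $\Delta u\cdot n|_{\partial\Omega}$ carries no normal derivative of $u$ of order $\ge 2$: on one hand $u\cdot n\equiv 0$ on $\partial\Omega$ forces the tangential Laplacian of $u\cdot n$ to vanish there, which cancels the would‑be second tangential derivatives of $u$ up to terms carrying derivatives of $n$; on the other hand, differentiating \eqref{divcoord} in the normal direction turns the normal–normal second derivative of the normal component of $u$ into tangential derivatives of $\partial_n u$, which \eqref{N2} then replaces by $\theta(u)-2\alpha\,\Pi u$. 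This is exactly the computation that in the half‑space gave $\partial_z p_2(y,0)=2\alpha\eps\,\nabla_h\cdot u_h(y,0)$. Hence $\Delta u\cdot n|_{\partial\Omega}$ is, up to coefficients of size $C_{m+2}$, a first‑order tangential expression in $u$, and the trace inequality yields $|\Delta u\cdot n|_{H^{m-3/2}(\partial\Omega)}\le C_{m+2}\big(\|\nabla u\|_{m-1}+\|u\|_m\big)$, which gives \eqref{estpressureb2}, the factor $\eps$ being carried through linearly.

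For $p_1$ the source is only conormally regular, so one genuinely needs the conormal Neumann estimate
$$\|\nabla q\|_{k-1}+\|\nabla^2 q\|_{k-1}\le C_{k+2}\big(\|g\|_{k-1}+|h|_{H^{k-1}(\partial\Omega)}+\|\nabla q\|\big),$$
obtained by localizing on the covering \eqref{covomega}, using interior regularity in $\Omega_0$ and, near the boundary, working in the coordinates \eqref{coord}: one commutes the tangential fields $\partial_{y^1},\partial_{y^2}$ through $\Delta$ freely (the resulting commutators are first order in $\partial_z$ with coefficients of size $C_{k+1}$) and recovers the remaining normal derivative algebraically from the equation, $\partial_{zz}q$ being $\Delta q$ minus a tangential second‑order operator in $q$ modulo lower order; an induction on the number of $Z_3$‑factors and on $k$, with the $H^2$ elliptic estimate in a $\mathcal C^2$ domain as base case, closes it. I would then simplify the source using $\nabla\cdot u=0$: $\nabla\cdot(u\cdot\nabla u)=\mathrm{tr}\big((\nabla u)^2\big)$ is a sum of products of two first‑order derivatives of $u$, so $\|\Delta p_1\|_{m-1}\le C_m\|\nabla u\|_{L^\infty}\|\nabla u\|_{m-1}$ by Lemma~\ref{gag}. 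For the Neumann datum, $u\cdot n=0$ and $u$ tangent on $\partial\Omega$ give $(u\cdot\nabla u)\cdot n=-(u\otimes u):\nabla n$ there, hence $\partial_n p_1$ is $u$ times a $C_{m+2}$ coefficient, and the boundary tame estimate together with the trace inequality give $|\partial_n p_1|_{H^{m-1}(\partial\Omega)}\le C_{m+2}\|u\|_{W^{1,\infty}}\big(\|u\|_m+\|\nabla u\|_{m-1}\big)$; finally $\|\nabla p_1\|\le \|u\cdot\nabla u\|\lesssim \|u\|_{L^\infty}\|\nabla u\|$ by testing the equation for $p_1$ against $p_1$. Feeding these into the conormal Neumann estimate gives \eqref{estpressureb1}.

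The main obstacle is the conormal Neumann estimate for $p_1$, and within it the bound on $\|\nabla^2 p_1\|_{m-1}$: the second normal derivatives cannot be generated by commuting $Z_3$ through $\Delta$ (that would cost two extra normal derivatives), so they must be bought back order by order from the Laplace equation, and it is precisely this step that consumes the full $\mathcal C^{m+2}$ regularity of $\partial\Omega$ — through the metric, $n$, $\nabla n$ and the commutator coefficients. The curvature (shape‑operator) contributions that are absent in the half‑space enter here, but they are harmless, being lower order and carrying only bounded geometric coefficients.
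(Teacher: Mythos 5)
Your proposal is correct and follows essentially the same route as the paper's proof: elliptic Neumann regularity (in conormal form for $p_{1}$), the rewriting of the datum $\partial_{n}p_{1}=-(u\otimes u):\nabla n$ using $u\cdot n=0$, and the key reduction of $\eps\,\Delta u\cdot n_{/\partial\Omega}$ to a first-order tangential expression in $u$ via the normal derivative of the divergence constraint together with the Navier condition, concluded by the trace theorem. The differences are only organizational: the paper invokes standard Neumann elliptic regularity where you sketch the conormal version, and it carries out the boundary computation through $2\Delta u\cdot n=\nabla\cdot(Su\,n)-\sum_{j}(Su\,\partial_{j}n)_{j}$ and two uses of \eqref{divcoord}, whereas your commutation of $\Delta$ with $u\cdot n$ implicitly requires the block-diagonal normal geodesic coordinates of Section \ref{supb} (so that no mixed normal--tangential second derivatives arise), which is legitimate here since only low-order coefficient regularity is consumed.
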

%   Note that this estimate is in agreement with  what we found in the flat case in Remark \ref{maxpressure}.
     Note  that  thanks to \eqref{estpressureb2}, we  have that
   $$ \eps^{-1} \| \nabla p_{2} \|_{m-1}^2 \leq C_{m+2} \big( \|u \|_{m}^2 + \| \nabla u \|_{m-1}^2 \big).$$
   Consequently, by combining \eqref{estnormb+} and Proposition \ref{pressureb}, we get that
   \begin{eqnarray}
        \label{estnormbglob1}
& &   \|u(t) \|_{m}^2 +  \| \nabla u (t ) \|_{m-1}^2 + \eps \int_{0}^t \| \nabla^2  u  \|_{m-1}^2   \\
\nonumber   & &  \leq C_{m+2} \big(   \|u_{0}\|_{m}^2 + \| \nabla u_{0} \|_{m}^2 \big)  
 + C_{m+2}
   \int_{0}^t  \Big( \big( 1 + \|u \|_{2, \infty}
 + \|\nabla u \|_{1, \infty} \big) \big(   \|u \|_{m}^2 + \| \nabla u \|_{m-1}^2  \big) \Big)
        \end{eqnarray}

   \subsubsection*{Proof}
   We recall that we have  $p=p_{1}+ p_{2}$ where
   \beq
   \label{p1b}
   \Delta p_{1}=  - \nabla \cdot(u \cdot \nabla u)= -  \nabla u \cdot \nabla u, \quad x \in \Omega, \quad
    \partial_{n}p_{1}=  - ( u\cdot \nabla u ) \cdot n, \quad x \in \partial \Omega
    \eeq
     and
     \beq
     \label{p2b}
     \Delta p_{2}= 0, \quad x \in \Omega, \quad  \partial_{n}p_{2} = \eps  \Delta u \cdot n, \quad  x \in \partial \Omega.
     \eeq
     From standard  elliptic regularity results with  Neumann boundary conditions, we get that
   $$    \| \nabla p_{1} \|_{m-1} + \| \nabla^2 p_{1} \|_{m-1} \leq C_{m+1} \Big( \| \nabla u  \cdot \nabla u \|_{m-1}+
    \|u \cdot \nabla u \| + |\big( u \cdot \nabla u \big) \cdot n |_{H^{m- {1 \over 2}}(\partial \Omega)} \Big).$$
    Since $u \cdot n=0$ on the boundary, we note that
    $$  ( u \cdot \nabla u \big) \cdot n= - \big( u \cdot \nabla n \big) \cdot u, \quad x \in \partial \Omega$$
     and consequently, thanks to the trace Theorem, we obtain that
     $$   |\big( u \cdot \nabla u \big) \cdot n |_{H^{m- {1 \over 2}}(\partial \Omega)}
      \leq C_{m+2} \big( \| \nabla \big(u \otimes u\big) \|_{m-1}+   \| u \otimes u \|_{m-1} \big).$$
       Thanks to a new use of Lemma \ref{gag}, this yields
    $$   \| \nabla p_{1} \|_{m-1} + \| \nabla^2 p_{1} \|_{m-1} \leq C_{m+2} \Big(
     \big( 1 + \|u \|_{W^{1, \infty}} \big) \big( \|u \|_{m} + \|\nabla u\|_{m-1} \big) \Big).$$
     It remains to estimate $p_{2}$. By using again  the elliptic regularity for the Neumann problem, 
     we get that for $m \geq 2$, 
     \beq
     \label{nablap21}   \| \nabla p_{2} \|_{m-1}  \leq  \eps\,  C_{m} | \Delta u \cdot n |_{H^{m- {3  \over 2}}(\partial \Omega)}.\eeq
     To estimate the right hand side,  we shall again use the Navier boundary condition \eqref{N}.
      Since
      $$ 2 \Delta u \cdot n = \nabla \cdot \big( Su \, n) - \sum_{j} \big(Su\, \partial_{j} n \big)_{j},$$
      we first get that
     $$  | \Delta u \cdot n |_{H^{m- {3  \over 2}}(\partial \Omega)}
      \lesssim  | \nabla \cdot \big( Su \, n) |_{H^{m- {3  \over 2}}(\partial \Omega)}
       +  C_{m+1} | \nabla u |_{H^{m- {3  \over 2}}(\partial \Omega)}$$
        and hence thanks to  \eqref{divcoord} and \eqref{N2} that
      $$  | \Delta u \cdot n |_{H^{m- {3  \over 2}}(\partial \Omega)}
            \lesssim  | \nabla \cdot \big( Su \, n) |_{H^{m- {3  \over 2}}(\partial \Omega)} + C_{m+1} |u |_{H^{m- {1 \over 2}}(\partial \Omega)}.$$
            To estimate the first term, we can use  the expression \eqref{divcoord} to get
        $$   | \nabla \cdot \big( Su \, n) |_{H^{m- {3  \over 2}}(\partial \Omega)}
         \lesssim   | \partial_{n} \big( Su \, n) \cdot n  |_{H^{m- {3  \over 2}}(\partial \Omega)}
        + C_{m+1} \Big(  |\Pi \big( Su \, n \big) |_{H^{m - {1 \over 2 }}(\partial \Omega)} +
        |\nabla u |_{H^{m - {3 \over 2 }} (\partial \Omega)}\Big)$$
       and hence by using again \eqref{divcoord},  \eqref{N2} and \eqref{N}, we obtain that
       $$   | \nabla \cdot \big( Su \, n) |_{H^{m- {3  \over 2}}(\partial \Omega)}
         \lesssim   | \partial_{n} \big( Su \, n) \cdot n  |_{H^{m- {3  \over 2}}(\partial \Omega)}
        + C_{m+1}
        | u |_{H^{m - {1 \over 2 }} (\partial \Omega)}.$$
        The first term above  in the right hand side  can  be estimated by 
       \begin{eqnarray*}    | \partial_{n} \big( Su \, n) \cdot n  |_{H^{m- {3  \over 2}}(\partial \Omega)}
       &  \lesssim &  | \partial_{n} \big( \partial_{n} u \cdot n \big) |_{H^{m- {3  \over 2}}(\partial \Omega)}
        +  C_{m+1} | \nabla u |_{H^{m- {3  \over 2}}(\partial \Omega)} \\
        & \lesssim   &  | \partial_{n} \big( \partial_{n} u \cdot n \big) |_{H^{m- {3  \over 2}}(\partial \Omega)}
          +   C_{m+1} |u|_{H^{m- {1 \over 2}}(\partial \Omega)}.
          \end{eqnarray*}
        Finally,   taking the normal derivative of \eqref{divcoord}, we get that
      \begin{eqnarray*}
       | \partial_{n} \big( \partial_{n} u \cdot n \big) |_{H^{m- {3  \over 2}}(\partial \Omega)}
      &  \lesssim &  | \Pi \partial_{n}u |_{H^{m- {1 \over 2 }}(\partial \Omega)}
        +  C_{m+1} | \nabla u |_{H^{m - {3 \over 2}}(\partial \Omega)} \\
        & \lesssim&   C_{m+2}\,  |u |_{H^{m- {1 \over 2}} (\partial \Omega)}
        \end{eqnarray*}
        where the last line comes from a new use of \eqref{N2}. Note that this is the estimate of this
        term which requires the more regularity of the boundary.
        
        Consequently, we have  proven that
      $$  | \Delta u \cdot n |_{H^{m- {3  \over 2}}(\partial \Omega)}
 \leq C_{m+2}  |u |_{H^{m- {1 \over 2}} (\partial \Omega)}$$
 and hence by using \eqref{nablap21} and the trace Theorem, we get that
 $$ \| \nabla p_{2} \|_{m-1} \leq C_{m+2} \eps  \big( \|u \|_{m} + \| \nabla u \|_{m-1} \big).$$
 
 This ends the proof of Proposition \ref{pressureb}.

    \subsection{$L^\infty$ estimates}
    \label{supb}
    In order to close the estimates, we need  an estimate of the $L^\infty$ norms in the right hand side.
     As before, let us set
     $$ N_{m}(t)=  \|u(t) \|_{m}^2 + \| \nabla u(t) \|_{m-1}^2+  \| \nabla u \|_{1, \infty}^2.$$
     \begin{prop}
     \label{Linftyb1} For $m_{0}>1$, we have
     \begin{eqnarray} 
     & & \|u\|_{2, \infty} \leq  C_{m}\big( \|u\|_{m} + \| \nabla u \|_{m-1}\big), \quad m \geq m_{0}+ 3,  \\
     & & \| u \|_{W^{1, \infty}} \leq C_{m} \big( \|u \|_{m} + \| \nabla u \|_{m-1} \big), \quad m \geq m_{0}+ 2. 
      \end{eqnarray}
      \end{prop}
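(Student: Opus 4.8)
The plan is to obtain both bounds from the anisotropic Sobolev embedding \eqref{sob}, localized to the charts of the covering \eqref{covomega} by a partition of unity. On the interior piece $\Omega_0$ (and on any cut-off supported away from $\partial\Omega$) the conormal norm $\|\cdot\|_m$ controls the full Sobolev norm $H^m$, so both estimates follow at once from the classical embeddings $H^{m_0+3}\hookrightarrow W^{2,\infty}$ and $H^{m_0+2}\hookrightarrow W^{1,\infty}$ (using $m_0>1$), and there one even uses only $\|u\|_m$. It remains to work in a boundary chart $\Omega_i$ with the coordinates \eqref{coord}. The structural facts I would exploit there are: since $z\mapsto x_3=\psi(y)+z$ has unit slope, $\partial_z=\partial_3$ is a genuine Cartesian derivative, so $\partial_z u$ is a component of $\nabla u$ and hence lies in $H^{m-1}_{co}$; moreover $[\partial_z,Z_i]=0$ for $i=1,2$, while $[\partial_z,Z_3]=\varphi'(z)\,(\partial_1\psi\,\partial_1+\partial_2\psi\,\partial_2-\partial_z)$ is first order with coefficients bounded by $C_1$.

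For $\|u\|_{2,\infty}$ I would apply \eqref{sob} (in the tangential variables of the chart) to $f=Z^\alpha u$ with $|\alpha|\le 2$: this requires bounding $\|\partial_z Z^\alpha u\|_{m_0}$, and writing $\partial_z Z^\alpha u=Z^\alpha\partial_z u+[\partial_z,Z^\alpha]u$ and iterating the two commutator identities above shows this is $\lesssim \|\partial_z u\|_{m_0+2}+\|u\|_{m_0+3}\lesssim C_m(\|\nabla u\|_{m-1}+\|u\|_m)$ as soon as $m\ge m_0+3$; together with $\|Z^\alpha u\|_{m_0}\le\|u\|_m$ this gives the first inequality. The purely conormal part of the second inequality, namely $\|u\|_{L^\infty}+\sum_{i=1,2}\|Z_iu\|_{L^\infty}$, is obtained identically with $|\alpha|\le1$, which only costs $m\ge m_0+2$ (in particular $\|u\|_{L^\infty}^2\lesssim\|\partial_z u\|_{m_0}\|u\|_{m_0}+\|u\|_{m_0}^2$, and $\partial_z u$ is a component of $\nabla u$).

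The remaining point for $\|u\|_{W^{1,\infty}}$ is to control the full first-order derivative near the boundary, i.e.\ the normal derivative $\partial_n u$, which is not conormal; I would split $\partial_n u=(\partial_n u\cdot n)\,n+\Pi\partial_n u$. For the normal component the divergence-free identity \eqref{divcoord} gives $\partial_n u\cdot n=-(\Pi\partial_{y^1}u)^1-(\Pi\partial_{y^2}u)^2$, a combination of $Z_1u,Z_2u$, hence already controlled. For the tangential component I would use \eqref{etab2}, which reads $\chi\,\Pi\partial_n u=\eta-\chi\,\Pi\big(\nabla(u\cdot n)-Dn\cdot u-u\times(\nabla\times n)+2\alpha u\big)$: since $n$ is independent of $x_3$ the last three terms are $\lesssim C_2|u|$, and $\nabla(u\cdot n)=(\nabla u)^t n+(\nabla n)^t u$, where decomposing each Cartesian direction as $e_k=(e_k)_\tau+(e_k\cdot n)n$ shows that $(\nabla u)^t n$ is a combination of tangential derivatives of $u$ and of $\partial_n u\cdot n$, hence again controlled. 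Thus near $\partial\Omega$ the quantity $\|\partial_n u\|_{L^\infty}$, and so $\|\nabla u\|_{L^\infty}$, is bounded by $\|Zu\|_{L^\infty}+\|u\|_{L^\infty}+\|\eta\|_{L^\infty}$, and by the definition \eqref{etab1def} one has $\|\eta\|_{m-1}\le C_{m+1}(\|u\|_m+\|\nabla u\|_{m-1})$; so, exactly as for \eqref{un+} in the half-space, $\|u\|_{W^{1,\infty}}$ reduces to the conormal norms of $u$, of $\nabla u$ and of $\eta$, together with $\|\eta\|_{L^\infty}$.

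The step I expect to be the genuine obstacle is precisely this last $L^\infty$ control of the tangential part $\Pi\partial_n u$ of the normal derivative: it cannot be read off from conormal norms alone (a function in $H^{m}_{co}$ need not even be bounded near $\partial\Omega$), which is the whole reason for introducing the field $\eta$ with the homogeneous Dirichlet condition \eqref{dirb}; $\|\eta\|_{L^\infty}$ itself has to be obtained, as in the half-space Proposition \ref{etainfty}, from the transport–diffusion structure of the equation satisfied by $\eta$ and a maximum-principle argument, which combined with the conormal bound $\|\eta\|_{m-1}\lesssim\|u\|_m+\|\nabla u\|_{m-1}$ closes the estimate. Everything else is the routine adaptation of the half-space computations, the only new feature being the replacement of explicit normal components by the projection $\Pi$ and the shape operator from the Navier condition \eqref{N2}; it is the bookkeeping of conormal versus normal derivatives in \eqref{divcoord}, \eqref{etab2} and in the non-flatness terms (carrying $C_{m+1},C_{m+2}$) that fixes the thresholds $m\ge m_0+3$ and $m\ge m_0+2$.
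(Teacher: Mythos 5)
Your proof of the first inequality is correct and is the same argument that the paper's one-line proof compresses into ``use local coordinates and Proposition~\ref{Linfty}'': localize by a partition of unity subordinate to \eqref{covomega}, use that $\|\cdot\|_m$ controls the full $H^m$ norm wherever the cut-off is supported away from $\partial\Omega$, and in each boundary chart apply the anisotropic embedding \eqref{sob} to $f=Z^\alpha u$, $|\alpha|\le 2$, commuting $\partial_z$ across $Z^\alpha$; the extra conormal derivative generated by the commutator is what fixes $m\ge m_0+3$. The purely conormal part $\|u\|_{L^\infty}+\sum_{i=1,2}\|Z_iu\|_{L^\infty}$ of the second bound is obtained the same way with $|\alpha|\le1$, costing $m\ge m_0+2$, and your bookkeeping through \eqref{divcoord} and \eqref{etab2} for the normal and tangential parts of $\partial_n u$ is the correct reduction.

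Where you go beyond the paper is in the last step, and you are right to flag it. Proposition~\ref{Linfty}, which the stated proof invokes, gives in \eqref{un+} precisely $\|u\|_{W^{1,\infty}}\lesssim\|u\|_{m_0+2}+\|\eta\|_{m_0+1}+\|\eta\|_{L^\infty}$; the term $\|\eta\|_{L^\infty}$ does \emph{not} drop out, and, as you observe, it cannot be absorbed into $\|u\|_m+\|\nabla u\|_{m-1}$, since doing so via \eqref{sob} would require $\|\partial_z\eta\|_{m_0}$, a second normal derivative of $u$ that the right-hand side does not control. So ``local coordinates plus Proposition~\ref{Linfty}'' actually yields the second inequality only with an additional $\|\Pi\partial_n u\|_{L^\infty}$ (equivalently $\|\eta\|_{L^\infty}$, or $\|\nabla u\|_{L^\infty}$) on the right, and replacing it by the transport--diffusion/maximum-principle argument of Proposition~\ref{etainfty}, as you propose, produces a time-integrated bound rather than the pointwise algebraic one stated here. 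This does not harm the rest of the paper: from \eqref{estnormb+} to \eqref{estnormbglob1} and throughout Proposition~\ref{nablausupb} only the first inequality of Proposition~\ref{Linftyb1} is used, together with the elementary $\|u\|_{W^{1,\infty}}\le\|u\|_{2,\infty}+\|\nabla u\|_{1,\infty}$ and the fact that $\|\nabla u\|_{1,\infty}$ is carried inside $N_m$ and estimated separately; but the second inequality as written is stronger than the indicated route can deliver, and your reading of the obstacle is the accurate one.
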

      \subsubsection*{Proof}
      It suffices to use local coordinates and Proposition \ref{Linfty}.
      
      In view of Proposition,  we still need to estimate $\|\nabla u \|_{1, \infty}.$
      
      \begin{prop}
      \label{nablausupb}
      For $m>6$, we have the estimate
      $$ \| \nabla u(t) \|_{1, \infty}^2 \leq
       C_{m+2} \Big( N_{m}(0) + (1+ t + \eps^3 t^2 \big) \int_{0}^t \big(N_{m}(s) + N_{m}(s)^2 \big) \, ds.$$
      \end{prop}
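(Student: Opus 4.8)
The plan is to follow the strategy of Proposition~\ref{etainfty}: first reduce the control of $\|\nabla u\|_{1,\infty}$ to an estimate of the quantity $\eta$ from \eqref{etab1def}, and then estimate $\eta$ through the maximum principle for its transport--diffusion equation \eqref{eqetab} together with the frozen-coefficient Green's function argument of Lemmas~\ref{lemFP0} and~\ref{FP}.

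For the reduction, note that away from $\partial\Omega$ the conormal and standard Sobolev norms are equivalent, so that contribution of $\nabla u$ is bounded by $C_m\|u\|_m$ by the usual Sobolev embedding. Near the boundary it suffices to work in a coordinate patch \eqref{coord}: the tangential fields $\partial_{y^1}u,\partial_{y^2}u$ and, using \eqref{divcoord}, the normal component $\partial_n u\cdot n$ are controlled by $\|u\|_{2,\infty}$, hence by $C_m(\|u\|_m+\|\nabla u\|_{m-1})$ thanks to Proposition~\ref{Linftyb1}, while the tangential component $\chi\Pi\partial_n u$ is expressed through $\eta$ by \eqref{etab2}, the remaining terms there ($\Pi\nabla(u\cdot n)$, $\Pi(Dn\cdot u)$, $\Pi u$, \dots) involving at most one conormal derivative of $u$. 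This gives
\[
\|\nabla u(t)\|_{1,\infty}^2\le C_{m+1}\big(\|u(t)\|_m^2+\|\nabla u(t)\|_{m-1}^2\big)+C_{m+1}\|\eta(t)\|_{1,\infty}^2 ,
\]
so that, by \eqref{estnormbglob1}, it only remains to bound $\|\eta\|_{1,\infty}$. Since moreover $\|\eta\|_{m-1}\le C_{m+1}(\|u\|_m+\|\nabla u\|_{m-1})$ follows directly from \eqref{etab1def}, all the right-hand sides that occur below are controlled by $N_m$.

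For $\|\eta\|_{1,\infty}$ I would argue as in Proposition~\ref{etainfty}. Writing $\eta=\tilde\chi\,\eta+(1-\tilde\chi)\eta$, with $\tilde\chi$ equal to one near $\partial\Omega$ and supported where $\chi\equiv1$, the second piece is supported away from the boundary and bounded by $C_m\|u\|_m$ by the usual Sobolev embedding, while $\eta^b:=\tilde\chi\,\eta$ vanishes on $\partial\Omega$ by \eqref{dirb} and solves \eqref{eqetab} with an extra cut-off commutator. Rewriting that equation in the coordinates \eqref{coord}, one keeps $\eps\,a(y)\,\partial_{zz}$ (with $a(y)=1+|\nabla\psi(y)|^2>0$) as the principal normal-diffusion term and puts the tangential and cross second-order parts of $\eps\Delta$, the curvature source $F^\kappa$ of \eqref{Fkappa}, the cut-off commutators and the source $\chi F^b+F^\chi-\chi\Pi(\nabla^2p\,n)$ (see \eqref{eqFb}) into a remainder $\mathcal S$. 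For $\|\eta^b\|_{L^\infty}$ and for the tangential derivatives $\partial_{y^1}\eta^b,\partial_{y^2}\eta^b$ the maximum principle applies directly, the transport commutator being controlled by the divergence-free condition as in \eqref{hetab}--\eqref{hetab1}. For $Z_3\eta^b=\varphi\,\partial_z\eta^b$ one uses the analogues of Lemmas~\ref{lemFP0} and~\ref{FP}: since the normal component $u\cdot\n$ of the advection vanishes on $\{z=0\}$, a Taylor expansion near the boundary isolates a drift $z\,\gamma(t,y)\,\partial_z$ with $\gamma$ built from $\partial_z(u\cdot\n)$ at $z=0$; absorbing the tangential transport $u_h(t,y,0)\cdot\nabla_h$ into the flow map $\Phi$ reduces, after freezing coefficients, to a one-dimensional Fokker--Planck equation of diffusivity $\eps\,a(y)$, whose explicit kernel yields $\|z\,\partial_z S(t,\tau)f_0\|_{L^\infty}\lesssim\|f_0\|_{L^\infty}+\|z\,\partial_z f_0\|_{L^\infty}$ and, through its $z$-smoothing, also absorbs the cross second-order terms. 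Duhamel's formula then gives $\|\eta^b\|_{1,\infty}\lesssim\|\eta(0)\|_{1,\infty}+\int_0^t\|\mathcal S\|_{1,\infty}$, the transport remainder being controlled, thanks to the weight $\varphi$ in $Z_3$ and \eqref{sob} (as in \eqref{trick}--\eqref{Rest}), by $\big(\|u\|_{2,\infty}+\|\nabla u\|_{1,\infty}\big)\big(\|\eta\|_{1,\infty}+\|\eta\|_{m_0+3}\big)$.

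It then remains to bound the source $\mathcal S$. The parts of $F^b,F^\chi,F^\kappa$ not carrying $\eps\nabla^2u$ are controlled by $C_{m+2}(1+\|u\|_{W^{1,\infty}})(\|u\|_m+\|\nabla u\|_{m-1})$ using \eqref{Fbm-1}--\eqref{Fkappam-1}, Lemma~\ref{gag} and Proposition~\ref{Linftyb1}; the term $\chi\Pi(\nabla^2p\,n)$ is treated by splitting $p=p_1+p_2$, writing $\Pi(\nabla^2p_1\,n)$ in terms of tangential gradients of $\nabla p_1$ and bounding it through $\|\nabla^2p_1\|_{m-1}$ and \eqref{sob} (Proposition~\ref{pressureb}), while $\nabla p_2$ carries an extra factor $\eps$. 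The terms carrying $\eps\nabla^2u$ --- from $F^\kappa$, from the tangential part of $\eps\Delta\eta^b$, and from $\nabla p_2$ --- are handled exactly as the term $\eps\int_0^t\|\Delta_y\eta^b\|_{1,\infty}$ in Proposition~\ref{etainfty}: by \eqref{sob} and Cauchy--Schwarz they produce a contribution $\lesssim\eps\int_0^t\|\nabla^2u\|_{m-1}^2+(\eps^2t+\eps^3t^2)\int_0^t N_m$, which is absorbed using \eqref{estnormbglob1}. Combining with the reduction step yields
\[
\|\nabla u(t)\|_{1,\infty}^2\le C_{m+2}\Big(N_m(0)+(1+t+\eps^3t^2)\int_0^t\big(N_m(s)+N_m(s)^2\big)\,ds\Big).
\]
The main obstacle is the $Z_3\eta^b$ estimate in the variable geometry: one must check that in the coordinates \eqref{coord} the normal component of the advection still vanishes on $\{z=0\}$ so the frozen drift is genuinely of the form $z\,\gamma(t,y)\,\partial_z$, that the cross second-order terms of $\eps\Delta$ can be thrown into the source without spoiling the $\eps$-uniformity (which is where the $z$-smoothing of the Fokker--Planck kernel is used), and that the $\eps\nabla^2u$ contributions --- in particular those coming from the curvature terms $F^\kappa$ and from $\nabla p_2$ --- are balanced so as to reproduce exactly the $(1+t+\eps^3t^2)$ structure; the rest is a fairly mechanical transcription of the half-space argument into local coordinates.
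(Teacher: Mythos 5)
Your global architecture is the right one (reduce to $\chi\Pi\partial_n u$ near the boundary, subtract the boundary value to get a quantity vanishing on $\partial\Omega$, apply the Fokker--Planck estimates of Lemmas \ref{lemFP0} and \ref{FP}, and treat the tangential part of $\eps\Delta$ as a source via \eqref{sob} and the bound on $\eps\int_0^t\|\nabla^2u\|_{m-1}^2$), but the two places where you depart from the paper are precisely where your argument breaks. First, the coordinates: in the parametrization \eqref{coord} the Laplacian is not $\Delta_y+\partial_{zz}$ but contains the cross terms $-2\eps\,\partial_i\psi\,\partial_{y^i}\partial_z$ and the first-order term $-\eps\,\Delta_y\psi\,\partial_z$. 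You propose to throw these into the remainder $\mathcal S$, but $\|\mathcal S\|_{1,\infty}$ must be integrable in time with the available norms, and by \eqref{sob} an $L^\infty$ (let alone $\|\cdot\|_{1,\infty}$) bound on $\eps\,\partial_z\partial_{y^i}\eta$ or $\eps\,\partial_z\eta$ would require conormal control of $\partial_{zz}\eta$, i.e.\ three normal derivatives of $u$, which is controlled by nothing in $N_m$, not even with the $\eps$ prefactor (only $\eps\int_0^t\|\nabla^2u\|_{m-1}^2$ is available). Your parenthetical appeal to the ``$z$-smoothing of the Fokker--Planck kernel'' is not backed by anything: Lemma \ref{FP} gives no smoothing estimate, and making such an argument compatible with the weighted bound $\|z\partial_z S(t,\tau)f_0\|_{L^\infty}$ is exactly the difficulty. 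The paper's way out is geometric: it works in the normal geodesic system $\Psi^n$, where the metric is block diagonal \eqref{gform}, so $\Delta=\partial_{zz}+\tfrac12\partial_z(\ln|g|)\partial_z+\Delta_{\tilde g}$ has no cross terms, and the remaining first-order term is \emph{eliminated}, not estimated, by the change of unknown $\tilde\eta=|g|^{-1/4}\eta$ before Lemma \ref{lemFP0} is applied. This is the step the paper flags as ``crucial'', and it is missing from your proof.

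Second, the unknown: you keep the strain-based $\eta$ of \eqref{etab1def}, whose equation \eqref{eqetab} carries the source $\chi\Pi(\nabla^2p\,n)$. In the energy estimate this term was manageable only because the $p_2$ part could be integrated by parts against $Z^\alpha\eta$; in the maximum-principle/Duhamel framework there is no integration by parts, and after reducing $\Pi(\nabla^2p\,n)$ to tangential derivatives of $\partial_np$ plus lower order, the bound of its $\|\cdot\|_{1,\infty}$ norm through \eqref{sob} requires conormal control of $\nabla^2p_2$, which Proposition \ref{pressureb} does not provide (it only bounds $\|\nabla p_2\|_{m-1}\lesssim\eps(\cdots)$, and the Neumann data $\eps\Delta u\cdot n$ does not allow one more derivative uniformly at the top order). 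The paper avoids the issue by switching, for this $L^\infty$ step, to the vorticity-based quantity $\chi\Pi(\tilde\omega\times n-\tilde u\cdot\nabla n+\alpha\tilde u)$: the vorticity equation contains no pressure, and $p$ only enters through $F^u$, i.e.\ through $\Pi\nabla p$, which involves first-order tangential derivatives only and is bounded by $\|\nabla p\|_{m-1}$ via \eqref{sob}. So as written your proof has genuine gaps at the two key structural choices (coordinate system and choice of unknown); the rest of your outline (interior cut-off, treatment of the transport commutator with the weight $\varphi$ as in \eqref{trick}--\eqref{Rest}, tangential diffusion handled through $\eps\int_0^t\|\nabla^2u\|_{m-1}^2$, and the final bookkeeping giving the $(1+t+\eps^3t^2)$ factor) does match the paper.
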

      \subsubsection*{Proof}
      Away from the boundary, we clearly have by  the classical isotropic Sobolev embedding that
      \beq
      \label{intbinfty} \|\chi \nabla u \|_{1, \infty} \lesssim \|u\|_{m}, \quad m \geq  4.\eeq
      Consequently, by using  a partition of unity subordinated to the covering
        \eqref{covomega}  we only have to estimate $\|\chi_{i} \nabla u \|_{L^\infty}$, $i >0$.
        For  notational convenience, we shall denote $\chi_{i}$ by $\chi$.
        Towards this, we want to proceed as in the proof of Proposition \ref{etainfty}.
          An important step in this proof was  to use Lemma \ref{FP}.
          It is thus crucial to choose a system of coordinates in which the Laplacian has
           a convenient   form. In this section, we shall use a local parametrization in the vicinity 
           of the boundary given by  a normal geodesic system: 
         $$ \Psi^n(y,z)= \left( \begin{array}{ll} y \\ \psi(y) \end{array} \right)  - z\,n(y)$$
         where
        $$ n( y)= { 1 \over \big(1 +| \nabla \psi(y) |^2\big)^{1 \over 2} }
            \left(  \begin{array}{ll}  \partial_{1} \psi(y) \\
                   \partial_{2} \psi(y) \\ - 1  \end{array} \right)$$
                    is the unit outward normal.
       We have not used this coordinate system to estimate the conormal derivatives  because it requires more regularity on
         the boundary.  Nevertheless, it  does not  yield  any restriction on the regularity  of the  boundary here, 
          since we need to estimate a lower number of derivatives.          
        As before, we can extend $n$ and $\Pi$ in the  interior by setting
        $$ n(\Psi^n(y,z))= n(y), \quad \Pi(\Psi^n(y,z))= \Pi(y).$$
        Note that $n(y)$ and $\Pi(y)$ have different definitions from the ones used before. 
         The interest of this parametrization is that  in the associated  local basis  of $\mathbb{R}^3$
         $(\partial_{y^1}, \partial_{y^2}, \partial_{z})$, we have  $\partial_{z}= \partial_{n}$ and
         $$ \Big(\partial_{y^i}\Big)_{/\Psi^n(y,z)} \cdot \Big(\partial_{z} \Big)_{/\Psi^n(y,z)}= 0.$$
        The  scalar product on $\mathbb{R}^3$ thus  induces
          in this coordinate system  the  Riemannian metric $g$   under the form
          \beq
          \label{gform}
           g(y,z)= \left( \begin{array}{cc} \tilde{g}(y,z) &  0 \\ 0 &  1 \end{array} \right).
           \eeq
           Consequently, the Laplacian in this coordinate system reads:
           \beq
           \label{laplacian}
           \Delta f= \partial_{zz} f + {1 \over 2} \partial_{z}\big( \ln |g| \big) \partial_{z} f + \Delta_{\tilde{g}}f
           \eeq
           where $|g|$ denotes the determinant of the matrix $g$ and $\Delta_{\tilde{g}}$ which is defined by 
           $$ \Delta_{\tilde{g}} f=  { 1 \over |\tilde{g}|^{1 \over 2} }  \sum_{1 \leq i, \, j \leq 2} \partial_{y^i}\big(
            \tilde{g}^{ij} |\tilde{g}|^{1 \over 2} \partial_{y^j} f \big)$$
            involves only tangential derivatives.

                  Next, we can  observe that thanks to  \eqref{divcoord} (in  the coordinate system that we have just defined)  and Proposition \ref{Linftyb1} we have that
      \beq
      \label{nablabetasup1}
     \|\chi  \nabla u \|_{1, \infty }  \leq C_{3}\big( \| \chi \Pi \partial_{n} u \|_{1, \infty} +
     \| u\|_{2, \infty} \big)  \leq C_{3} \big( \| \chi \Pi \partial_{n} u \|_{1, \infty} 
      +  \|u\|_{m} + \|\nabla u \|_{m-1}\big). 
      \eeq
      Consequently, we need to estimate  $\| \chi \Pi \partial_{n} u \|_{1, \infty}$.
      To estimate this quantity,  it is useful to introduce the vorticity 
      $$\omega = \nabla \times u.$$
      Indeed, by definition, we have
      \beq
      \label{omegaint} \Pi  \big( \omega \times n\big)= {1 \over 2} \Pi \big( \nabla u - \nabla u^t)n=
       {1 \over 2} \Pi\Big( \partial_{n} u  - \nabla (u \cdot n ) +  u \cdot \nabla n +  u \times \big( \nabla \times n)\Big).\eeq
       Consequently, we find that
       $$   \| \chi \Pi \partial_{n} u \|_{1, \infty} \leq C_{3} \Big(  \| \chi \Pi  ( \omega \times n)  \|_{1, \infty}
        + \|u \|_{2, \infty} \Big)$$
        and hence by a new use of Proposition \ref{Linftyb1} , we get that 
   \beq
   \label{chibdn1}   \| \chi \Pi \partial_{n} u \|_{1, \infty}      \leq  C_{3} \Big(  \| \chi  \Pi \big(\omega \times n \big)   \|_{1, \infty}
        + \|u \|_{m} + \|\nabla u \|_{m-1} \Big).\eeq
        In other words, we only need to estimate $ \| \chi \Pi \big( \omega \times n \big)\|_{1, \infty}$ in order to conclude.
        Note that $\omega$ solves the vorticity equation
        \beq
        \label{vortb}
        \partial_{t } \omega + u \cdot \nabla \omega - \eps \Delta \omega =  \omega \cdot \nabla u = F^\omega.\eeq
        Consequently, by setting   in the support of $\chi$ 
        $$ \tilde{\omega}(y,z)= \omega(\Psi^n(y,z)), \quad \tilde{u}(y,z)= u(\Psi^n(y,z)), $$
        we get that
        \beq
        \label{tildeomegab}
        \partial_{t} \tilde{\omega}+ \tilde{u}^1 \partial_{y^1} \tilde{\omega} + \tilde{u}^2 \partial_{y^2}  \tilde{\omega}+
         \tilde{u} \cdot n\, \partial_{z} \tilde{\omega} = 
         \eps\big( \partial_{zz} \tilde{\omega}  + {1 \over 2} \partial_{z}\big( \ln |g| \big) \partial_{z} \tilde{\omega} + \Delta_{\tilde{g}}\tilde{\omega}\big) + F^\omega \eeq
         and
         \beq
         \label{tildeub}
        \partial_{t} \tilde{u}+ \tilde{u}^1 \partial_{y^1} \tilde{u} + \tilde{u}^2 \partial_{y^2}  \tilde{u}+
         \tilde{u} \cdot n\, \partial_{z} \tilde{u} = 
         \eps \big(\partial_{zz} \tilde{u}  + {1 \over 2} \partial_{z}\big( \ln |g| \big) \partial_{z} \tilde{u} + \Delta_{\tilde{g}}\tilde{u}\big)  -\big(\nabla p\big) \circ\Psi^n.
         \eeq
         Note that we use the same convention as before for a vector $u$, $u^i$ denotes the components
          of $u$ in the local basis $(\partial_{y^1}, \, \partial_{y^2}, \, \partial_{z})$
           whereas $u_{i}$ denotes it components in the canonical basis of $\mathbb{R}^3$.
           The vectorial equations \eqref{tildeomegab} and \eqref{tildeub} have to be understood
             components by components in the standard basis of  $\mathbb{R}^3$.
             
         By using \eqref{omegaint} on the boundary and  the Navier boundary condition \eqref{N2}, we get
         that  for $z=0$
     $$        \Pi (\tilde{\omega} \times n)=   \Pi\big( \tilde{u} \cdot \nabla n  - \alpha \tilde{u} \big).$$
      Consequently, we set
      \beq
      \label{chibdn2} \tilde{\eta}(y,z)=   \chi \Pi \Big(\tilde{\omega} \times n -   \tilde{u} \cdot \nabla n + 
      \alpha \tilde{u}\Big).\eeq
      We thus get that
      \beq
      \label{etatildeb}
      \tilde{\eta}(y,0)= 0
      \eeq
      and that $\tilde{\eta}$ solves the equation
     \beq
     \label{etatildeq}
     \partial_{t} \tilde{\eta} +  \tilde{u}^1 \partial_{y^1} \tilde{\eta} + \tilde{u}^2 \partial_{y^2}  \tilde{\eta}+
         \tilde{u} \cdot n\, \partial_{z} \tilde{\eta} =
         \eps \big(\partial_{zz} \tilde{\eta}  + {1 \over 2} \partial_{z}\big( \ln |g| \big) \partial_{z} \tilde{\eta}  \big)  + \chi \Pi F^\omega \times n + F^u
          + F^\chi + F^\kappa     \eeq
          where the source terms are given by 
    \begin{eqnarray}
    \label{Fubtilde}
     & & F^u= \chi \Pi \Big(  \nabla p \cdot \nabla n - \alpha \nabla p  \Big)\circ\Psi^n, \\
  \label{Fchitilde}   & & F^\chi=  \Big(\big( \tilde{u}^1 \partial_{y^1} + \tilde{u}^2 \partial_{y^2} + u \cdot n  \,\partial_{z} \big)
      \chi \Big) \Pi\big( \tilde{\omega} \times n - \tilde u \cdot \nabla n + \alpha \tilde{u}
       \big)\\
  \nonumber     & & \mbox{\hspace{0.5cm}}  - \eps \Big( \partial_{zz} \chi  +  2 \eps \partial_{z} \chi \partial_{z}  +  \eps {1 \over 2} \partial_{z}\big( \ln |g| \big) \partial_{z} \chi   \Big)  \Pi\big( \tilde{\omega} \times n - \tilde u \cdot \nabla n + \alpha \tilde{u}
       \big)\\
     \label{Fkappatilde}  & & F^\kappa = 
     \Big( \big( \tilde{u}^1 \partial_{y^1} + \tilde{u}^2 \partial_{y^2}  \big) \Pi \Big) 
      \tilde{\omega} \times n - \tilde u \cdot \nabla n + \alpha \tilde{u}
       +  \Pi \Big( \tilde{\omega} \big( \tilde{u}^1 \partial_{y^1} + \tilde{u}^2 \partial_{y^2}  \big) n \Big) 
        \\
    \nonumber    & & \mbox{\hspace{0.5cm}}- \Pi \Big(   \big( \big( \tilde{u}^1 \partial_{y^1} + \tilde{u}^2 \partial_{y^2}  \big)   \nabla n \big) u \Big) \\
    \nonumber & &   \mbox{\hspace{0.5cm}} - \eps \Delta_{\tilde{g}} \Big(
      \chi \Pi \Big(\tilde{\omega} \times n -   \tilde{u} \cdot \nabla n + 
      \alpha \tilde{u}\Big) \Big).
                  \end{eqnarray}
      Note  that in computing  the source terms and in particular $F^\kappa$ which contains
      all the  commutators coming from the fact that $\Pi$ and $n$ are not constant, we have
      used that in the coordinate system that we have choosen, $\Pi$ and $n$ do not
      depend on the normal variable.
      By using that $\Delta_{\tilde{g}}$ only involves tangential derivatives and that the derivatives
       of  $\chi$ are  compactly supported away from the boundary, we get the estimates
       \begin{eqnarray*}
     & &  \|F^u \|_{1, \infty} \leq  C_{3} \| \Pi \nabla p \|_{1, \infty}, \\
     & & \| F^\chi \|_{1, \infty }\leq C_{3}\Big(    \|u \|_{1, \infty} \| u \|_{2,\infty} + \eps \|u \|_{3, \infty}\Big), \\
     & & \|F^\kappa \|_{1, \infty} \leq C_{4} \Big(  \|u\|_{1, \infty} \| \nabla u \|_{1, \infty} +
      \eps \big( \|\nabla u\|_{3, \infty} + \|u \|_{3, \infty}\big)\Big).
        \end{eqnarray*}
        Note that the fact that the term $(\nabla p \cdot \nabla )n$ in \eqref{Fubtilde} contains only tangential derivatives
         of the pressure  comes from the block diagonal structure of the metric \eqref{gform} and
          the fact that $n$ does not depend on the normal variable $z$.
          
        Consequently, by using  Proposition \ref{Linftyb1}, we get that
      \beq
      \label{supsourceb}
       \|F\|_{1, \infty} \leq  C_{4} \Big(  \| \Pi \nabla p \|_{1, \infty} +  Q_{m} + \eps 
        \|\nabla u \|_{3, \infty}\Big), \quad m \geq m_{0}+ 4
        \eeq
        where $F= F^u + F^\chi + F^\kappa$.
        
       In order to be able to use Lemma \ref{lemFP0}, we shall perform a last change of unknown
        in order to eliminate the term 
      $ \partial_{z}\big( \ln |g| \big) \partial_{z} \tilde{\eta} $
       in \eqref{etatildeq}. We set
       $$ \tilde{\eta}= {1 \over |g |^{1 \over 4} } \eta= \gamma \eta.$$
       Note that  we have
       \beq
       \label{chibdn3} \| \tilde{\eta} \|_{1, \infty} \lesssim  C_{3}\| \eta \|_{1, \infty}, \quad
         \| \eta \|_{1, \infty} \lesssim  C_{3}\|\tilde{ \eta} \|_{1, \infty}\eeq
        and that moreover, $\eta$ solves the equation
       \begin{eqnarray}
       \label{etabequation}
      & &  \partial_{t} \eta +  \tilde{u}^1 \partial_{y^1} \tilde{\eta} + \tilde{u}^2 \partial_{y^2}  \eta+
         \tilde{u} \cdot n\, \partial_{z} \eta - \eps \partial_{zz} \eta \\
         & &=
        {1 \over \gamma} \big( \chi \Pi F^\omega \times n + F^u
          + F^\chi + F^\kappa + \eps \partial_{zz} \gamma \, \eta
           + {\eps \over 2} \partial_{z} \ln |g| \, \partial_{z} \gamma \, \eta -( \tilde{u} \cdot \nabla
           \gamma) \, \eta  \Big) := \mathcal{S}. 
           \end{eqnarray} 
        Consequently, by using Lemma \ref{lemFP0}, we get that
     \begin{eqnarray*} \| \eta (t) \|_{1, \infty}
         &  \lesssim &  \| \eta_{0} \|_{1, \infty} + 
          \int_{0}^t \Big( \big(  \|\tilde{u} \|_{2, \infty} + \| \partial_{z} \tilde{u} \|_{1, \infty} \big) \big( \| \eta \|_{1, \infty}
           + \| \eta \|_{m_{0}+ 3} \big) + \| \mathcal{S} \|_{1, \infty} \Big) \\
   & \lesssim &      \| \eta_{0} \|_{1, \infty} + 
         C_{3} \int_{0}^t \Big( \big(  \|u \|_{2, \infty} + \| \nabla u  \|_{1, \infty} \big) \big( \| \eta \|_{1, \infty}
           + \| \eta \|_{m_{0}+ 3} \big) + \| \mathcal{S} \|_{1, \infty} \Big).
           \end{eqnarray*}
           Consequently, we can use  \eqref{chibdn1}, \eqref{chibdn2}, \eqref{chibdn3}, \eqref{supsourceb}
           and Proposition  \ref{Linftyb1} to get as in the proof of Proposition \ref{etainfty} that
      \begin{eqnarray*}
       \| \chi \Pi\, \partial_{n} u (t)\|_{1, \infty}^2
      &  \leq&  C_{m+1} \Big( \|u(t) \|_{m}^2 + \| \nabla u(t) \|_{m-1}^2 +  N_{m}(0) + 
         \eps \int_{0}^t \| \nabla^2 u \|_{m-1}^2 \\
         & &  \quad   + (1+ t + \eps^3t ^2 \big)\int_{0}^t\big( N_{m}(s) + N_{m}^2 (s) +
          \| \Pi \, \nabla p \|_{1, \infty}^2 \, \big)ds.
          \end{eqnarray*}
          Since $\Pi  \nabla p$ involves only tangential derivatives, we  get thanks to the anisotropic Sobolev
           embedding that for $m \geq 4$
        $$   \| \Pi \, \nabla p \|_{1, \infty}^2 \leq C_{m}  \| \nabla p \|_{m-1}^2.$$
        Consequently, the proof of Proposition \ref{nablausupb} follows by
         using \eqref{estnormbglob1} and Proposition \ref{pressureb}. 
         
      \subsection{Proof of Theorem \ref{apriorib}}
      It suffices to combine  Proposition \ref{nablausupb} and the estimate \eqref{estnormbglob1}.

        \section{Proof of Theorem \ref{mainb}}
        To prove  that  \eqref{NS}, \eqref{N} is locally well-posed in  the function space $E^m \cap \mbox{Lip}$,
          one can for example smooth the initial data in order to use  a standard 
           well-posedness result and then use   the  priori estimates
            given in Theorem \ref{apriorib} and a compactness argument to prove the local existence
             of  a solution (we shall not give more details since the compactness
              argument is almost the same as the one needed for the proof of Theorem \ref{inviscid}). The uniqueness  of the solution is clear since we work with functions
              with Lipschitz regularity.
           The fact that the  life time of the solution is independent of the viscosity $\eps$  then follows  by
            using again  Theorem \ref{apriorib} and a continuous induction argument.

       \section{Proof of Theorem \ref{inviscid}}
       
       \label{sectioninviscid}
       
       Thanks to Theorem \ref{mainb}, the apriori estimate \eqref{uniftheo} holds on $[0, T]$.
        In particular, for each $t$, $u^\eps(t)$ is bounded in $H^m_{co}$ and
         $\nabla u^\eps(t)$ is bounded in $H^{m-1}_{co}$.  This yields that  for each $t$,  $u^\eps(t)$
          is compact in  $H^{m-1}_{co}$. Next, by using the equation \eqref{NS}, we get that
          $$ \int_{0}^T  \|\partial_{t} u^\eps(t) \|_{m-1}^2
           \leq  \int_{0}^T \big(  \eps^2 \| \nabla ^2 u^\eps \|_{m-1}^2 +  \| \nabla p^\eps \|_{m-1}^2 + 
            \|u^\eps  \cdot \nabla u^\eps \|_{m-1}^2  \big) ds$$
           and hence by using  Lemma \ref{gag} and Proposition \ref{pressureb}, we get
            that
         $$  \int_{0}^T  \|\partial_{t} u^\eps(t) \|_{m-1}^2
           \leq  \int_{0}^T \big( \eps^2   \| \nabla ^2 u^\eps \|_{m-1}^2 + 
           \big( 1 + \|u \|_{m}^2 + \| \nabla u \|_{m-1}^2 + \| \nabla u \|_{L^\infty}^2\big)  \big(
           \|u \|_{m}^2 + \| \nabla u \|_{m-1}^2\big) ds.$$
           Consequently, thanks to the uniform estimate \eqref{uniftheo}, we get that
            $\partial_{t}u^\eps $ is uniformly bounded in $L^2(0, T, H^{m-1}_{co})$.
            
            From the Ascoli Theorem, we thus get that $u^\eps$ is compact in $\mathcal{C}([0, T], H^{m-1}_{co}\big)$.
             In particular, there exists a sequence $\eps_{n}$ and $u\in\mathcal{C}([0, T], H^{m-1}_{co}\big) $
              such that $u^{\eps_{n}}$ converges towards $u$ in  
     $  \mathcal{C}([0, T], H^{m-1}_{co}\big) $.  By using again the uniform bounds
      \eqref{uniftheo}, we get that $u \in \mbox{Lip}$.      Thanks to the anisotropic Sobolev embedding \eqref{sob}, we also have that for $m_{0}>1$
      $$ \sup_{[0, T]} \|u ^{\eps_{n}}(t) - u(t) \|_{L^\infty}^2 \leq  \sup_{[0, T]}\big( \| \nabla \big(u^{\eps_{n}} - u\big) \|_{m_{0}}
        \| u^{\eps_{n}} - u\big \|_{m_{0}} +   \| u^{\eps_{n}} - u\big \|_{m_{0}}^2 \big)
       $$
        and hence  again thanks to the uniform bound \eqref{uniftheo}, we get that $u^{\eps_{n}}$ converges uniformly towards $u$ on $[0, T] \times \Omega$. Moreover, it is easy to  check that $u$ is a weak solution of the Euler equation.
         
         Finally since $u \in L^\infty \big( [0, T ] , L^2 \cap \mbox{Lip}\big)$, $u$ is actually  unique and hence we get that
          the whole family $u^\eps$ converges towards $u$. This ends the proof of Theorem \ref{inviscid}.

  \vspace{0.5cm}      

\subsection*{Acknowledgements}
N. M.  was partially supported by an NSF grant and F. R.  by the ANR project ANR-08-JCJC-0104-01.
This work was carried out during visits to the Universities of Nice and Rennes 1 and the Courant Institute.
The hospitality of these institutions is highly acknowledged.


\begin{thebibliography}{10}

\bibitem{Bardos}
{\sc Bardos, C.}
\newblock Existence et unicit\'e de la solution de l'\'equation d'{E}uler en
  dimension deux.
\newblock {\em J. Math. Anal. Appl. 40\/} (1972), 769--790.

\bibitem{Bardos-Rauch}
{\sc Bardos, C., and Rauch, J.}
\newblock Maximal positive boundary value problems as limits of singular
  perturbation problems.
\newblock {\em Trans. Amer. Math. Soc. 270}, 2 (1982), 377--408.

\bibitem{DGV}
{\sc Basson, A., and G{\'e}rard-Varet, D.}
\newblock Wall laws for fluid flows at a boundary with random roughness.
\newblock {\em Comm. Pure Appl. Math. 61}, 7 (2008), 941--987.

\bibitem{Beirao}
{\sc Beir{\~a}o~da Veiga, H.}
\newblock Vorticity and regularity for flows under the {N}avier boundary
  condition.
\newblock {\em Commun. Pure Appl. Anal. 5}, 4 (2006), 907--918.

\bibitem{BC09}
{\sc Beir{\~a}o~da Veiga, H., and Crispo, F.}
\newblock Concerning the ${W}^{k,p}$ -inviscid limit for 3-d flows under a slip
  boundary condition.
\newblock {\em J. Math. Fluid Mech.\/}.

\bibitem{Berselli}
{\sc Berselli, L.~C., and Spirito, S.}
\newblock On the vanishing viscosity limit for the 3d {N}avier-{S}tokes
  equations under slip boundary conditions in general domains.
\newblock {\em Preprint, 2010\/}.

\bibitem{Mikelic-Robert}
{\sc Clopeau, T., Mikeli{\'c}, A., and Robert, R.}
\newblock On the vanishing viscosity limit for the {$2{\rm D}$} incompressible
  {N}avier-{S}tokes equations with the friction type boundary conditions.
\newblock {\em Nonlinearity 11}, 6 (1998), 1625--1636.

\bibitem{GM10}
{\sc G{\'e}rard-Varet, D., and Masmoudi, N.}
\newblock Relevance of the slip condition for fluid flows near an irregular
  boundary.
\newblock {\em Comm. Math. Phys. 295}, 1 (2010), 99--137.

\bibitem{Gisclon-Serre}
{\sc Gisclon, M., and Serre, D.}
\newblock \'{E}tude des conditions aux limites pour un syst\`eme strictement
  hyberbolique via l'approximation parabolique.
\newblock {\em C. R. Acad. Sci. Paris S\'er. I Math. 319}, 4 (1994), 377--382.

\bibitem{Grenier-Gues}
{\sc Grenier, E., and Gu{\`e}s, O.}
\newblock Boundary layers for viscous perturbations of noncharacteristic
  quasilinear hyperbolic problems.
\newblock {\em J. Differential Equations 143}, 1 (1998), 110--146.

\bibitem{Grenier-Masmoudi}
{\sc Grenier, E., and Masmoudi, N.}
\newblock Ekman layers of rotating fluids, the case of well prepared initial
  data.
\newblock {\em Comm. Partial Differential Equations 22}, 5-6 (1997), 953--975.

\bibitem{Grenier-Rousset}
{\sc Grenier, E., and Rousset, F.}
\newblock Stability of one-dimensional boundary layers by using {G}reen's
  functions.
\newblock {\em Comm. Pure Appl. Math. 54}, 11 (2001), 1343--1385.

\bibitem{Gues}
{\sc Gu{\`e}s, O.}
\newblock Probl\`eme mixte hyperbolique quasi-lin\'eaire caract\'eristique.
\newblock {\em Comm. Partial Differential Equations 15}, 5 (1990), 595--645.

\bibitem{Hormander}
{\sc H{\"o}rmander, L.}
\newblock Pseudo-differential operators and non-elliptic boundary problems.
\newblock {\em Ann. of Math. (2) 83\/} (1966), 129--209.

\bibitem{Iftimie-Planas}
{\sc Iftimie, D., and Planas, G.}
\newblock Inviscid limits for the {N}avier-{S}tokes equations with {N}avier
  friction boundary conditions.
\newblock {\em Nonlinearity 19}, 4 (2006), 899--918.

\bibitem{Iftimie-Sueur}
{\sc Iftimie, D., and Sueur, F.}
\newblock Viscous boundary layers for the {N}avier-{S}tokes equations with the
  navier slip conditions.
\newblock {\em Arch. Rat. Mech. Analysis, available online\/}.

\bibitem{JM09}
{\sc Jang, J., and Masmoudi, N.}
\newblock Well-posedness for compressible {E}uler equations with physical
  vacuum singularity.
\newblock {\em Comm. Pure Appl. Math. 62}, 10 (2009), 1327--1385.

\bibitem{Kato72}
{\sc Kato, T.}
\newblock Nonstationary flows of viscous and ideal fluids in $ {R}\sp{3}$.
\newblock {\em J. Functional Analysis 9\/} (1972), 296--305.

\bibitem{Kato86}
{\sc Kato, T.}
\newblock Remarks on the {E}uler and {N}avier-{S}tokes equations in ${ {R}}\sp
  2$.
\newblock In {\em Nonlinear functional analysis and its applications, Part 2
  (Berkeley, Calif., 1983)}. Amer. Math. Soc., Providence, R.I., 1986,
  pp.~1--7.

\bibitem{Kelliher}
{\sc Kelliher, J.~P.}
\newblock Navier-{S}tokes equations with {N}avier boundary conditions for a
  bounded domain in the plane.
\newblock {\em SIAM J. Math. Anal. 38}, 1 (2006), 210--232 (electronic).

\bibitem{Klainerman-Majda}
{\sc Klainerman, S., and Majda, A.}
\newblock Singular limits of quasilinear hyperbolic systems with large
  parameters and the incompressible limit of compressible fluids.
\newblock {\em Comm. Pure Appl. Math. 34}, 4 (1981), 481--524.

\bibitem{Lions96b}
{\sc Lions, P.-L.}
\newblock {\em Mathematical topics in fluid mechanics. {V}ol. 1}.
\newblock The Clarendon Press Oxford University Press, New York, 1996.
\newblock Incompressible models, Oxford Science Publications.

\bibitem{Masmoudi98arma}
{\sc Masmoudi, N.}
\newblock The {E}uler limit of the {N}avier-{S}tokes equations, and rotating
  fluids with boundary.
\newblock {\em Arch. Rational Mech. Anal. 142}, 4 (1998), 375--394.

\bibitem{Masmoudi00cpam}
{\sc Masmoudi, N.}
\newblock Ekman layers of rotating fluids: the case of general initial data.
\newblock {\em Comm. Pure Appl. Math. 53}, 4 (2000), 432--483.

\bibitem{Masmoudi07hand}
{\sc Masmoudi, N.}
\newblock Examples of singular limits in hydrodynamics.
\newblock In {\em Evolutionary equations. Vol. III}, Handb. Differ. Equ.
  Elsevier/North-Holland, Amsterdam, 2007, pp.~195--276.

\bibitem{Masmoudi07cmp}
{\sc Masmoudi, N.}
\newblock Remarks about the inviscid limit of the {N}avier-{S}tokes system.
\newblock {\em Comm. Math. Phys. 270}, 3 (2007), 777--788.

\bibitem{Masmoudi-Rousset}
{\sc Masmoudi, N., and Rousset, F.}
\newblock Stability of oscillating boundary layers in rotating fluids.
\newblock {\em Ann. Sci. \'Ec. Norm. Sup\'er. (4) 41}, 6 (2008), 955--1002.

\bibitem{MS03cpam}
{\sc Masmoudi, N., and Saint-Raymond, L.}
\newblock From the {B}oltzmann equation to the {S}tokes-{F}ourier system in a
  bounded domain.
\newblock {\em Comm. Pure Appl. Math. 56}, 9 (2003), 1263--1293.

\bibitem{Metivier-Schochet}
{\sc M{\'e}tivier, G., and Schochet, S.}
\newblock The incompressible limit of the non-isentropic {E}uler equations.
\newblock {\em Arch. Ration. Mech. Anal. 158}, 1 (2001), 61--90.

\bibitem{Metivier-Zumbrun}
{\sc M{\'e}tivier, G., and Zumbrun, K.}
\newblock Large viscous boundary layers for noncharacteristic nonlinear
  hyperbolic problems.
\newblock {\em Mem. Amer. Math. Soc. 175}, 826 (2005), vi+107.

\bibitem{RoussetEkman}
{\sc Rousset, F.}
\newblock Stability of large {E}kman boundary layers in rotating fluids.
\newblock {\em Arch. Ration. Mech. Anal. 172}, 2 (2004), 213--245.

\bibitem{Rousset1D}
{\sc Rousset, F.}
\newblock Characteristic boundary layers in real vanishing viscosity limits.
\newblock {\em J. Differential Equations 210}, 1 (2005), 25--64.

\bibitem{Saint-Raymond}
{\sc Saint-Raymond, L.}
\newblock Weak compactness methods for singular penalization problems with
  boundary layers.
\newblock {\em SIAM J. Math. Anal. 41}, 1 (2009), 153--177.

\bibitem{Sammartino-Caflisch}
{\sc Sammartino, M., and Caflisch, R.~E.}
\newblock Zero viscosity limit for analytic solutions, of the {N}avier-{S}tokes
  equation on a half-space. {I}. {E}xistence for {E}uler and {P}randtl
  equations.
\newblock {\em Comm. Math. Phys. 192}, 2 (1998), 433--461.

\bibitem{Swann71}
{\sc Swann, H. S.~G.}
\newblock The convergence with vanishing viscosity of nonstationary
  {N}avier-{S}tokes flow to ideal flow in ${R}\sp{3}$.
\newblock {\em Trans. Amer. Math. Soc. 157\/} (1971), 373--397.

\bibitem{Tartakoff}
{\sc Tartakoff, D.~S.}
\newblock Regularity of solutions to boundary value problems for first order
  systems.
\newblock {\em Indiana Univ. Math. J. 21\/} (1971/72), 1113--1129.

\bibitem{Temam}
{\sc Temam, R.}
\newblock On the {E}uler equations of incompressible perfect fluids.
\newblock {\em J. Functional Analysis 20}, 1 (1975), 32--43.

\bibitem{Temam-Wang}
{\sc Temam, R., and Wang, X.}
\newblock Boundary layers associated with incompressible {N}avier-{S}tokes
  equations: the noncharacteristic boundary case.
\newblock {\em J. Differential Equations 179}, 2 (2002), 647--686.

\bibitem{Xin}
{\sc Xiao, Y., and Xin, Z.}
\newblock On the vanishing viscosity limit for the 3{D} {N}avier-{S}tokes
  equations with a slip boundary condition.
\newblock {\em Comm. Pure Appl. Math. 60}, 7 (2007), 1027--1055.

\end{thebibliography}
\end{document}